\renewcommand{\P}{\mathbb{P}}
\newcommand{\ccL}{{\mathscr L}}
\newcommand{\ccB}{{\mathscr B}}
\newcommand{\ccD}{{\mathscr D}}
\newcommand{\ccF}{{\mathscr F}}\newcommand{\cF}{{\mathcal F}}
\newcommand{\ccG}{{\mathscr G}}
\newcommand{\ccH}{{\mathscr H}}
\newcommand{\ccP}{{\mathscr P}}
\newcommand{\cN}{{\mathcal N}}
\newcommand{\ccS}{{\mathscr S}}
\newcommand{\ccR}{{\mathscr R}}
\renewcommand{\P}{\mathbb{P}}
\newcommand{\Ind}{{\mathds 1}}
\newcommand{\ind}[1]{\Ind_{\{#1\}}}
\newcommand{\R}{\mathbb{R}}
\newcommand{\bbF}{\mathbb{F}}
\newcommand{\N}{\mathbb{N}}
\newcommand{\PP}{\mathbb{P}}
\renewcommand{\P}{\PP}
\newcommand{\be}{\bm{e}}
\newcommand{\bA}{\bm{A}}
\newcommand{\bL}{\bm{L}}
\newcommand{\bH}{\bm{H}}
\newcommand{\bP}{\bm{P}}
\newcommand{\scal}[2]{\left\langle{#1},{#2}\right\rangle} 
\newtheorem{theorem}{Theorem}[section]
\newtheorem{corollary}[theorem]{Corollary}      
\newtheorem{lemma}[theorem]{Lemma}              
\newtheorem{proposition}[theorem]{Proposition}  
\theoremstyle{definition}
\newtheorem{definition}{Definition}[section]
\newtheorem{remark}{Remark}[section]
\renewcommand{\cF}{\ccF}
\definecolor{ts}{rgb}{1.0,0.6,0.0}
\definecolor{ma}{rgb}{1.0,0.8,0.0}  
\definecolor{tscolor}{rgb}{1.0,0.6,0.0}
\newcommand{\bk}{\mathbf{k}}
\newcommand{\bl}{\mathbf{l}}
\DeclareSymbolFont{fouriersymbols}{FMS}{futm}{m}{n}
\DeclareSymbolFont{fourierlargesymbols}{FMX}{futm}{m}{n}
\DeclareMathDelimiter{\VERT}{\mathord}{fouriersymbols}{152}{fourierlargesymbols}{147}
\begin{document}

\title{Time-inhomogeneous polynomial processes}
    \author[M. Agoitia]{Mar\'ia Fernanda del Carmen Agoitia Hurtado}
    \address{Chemnitz University of Technology, Reichenhainer Str. 41, 09126 Chemnitz, Germany.}
		\author[T. Schmidt]{Thorsten Schmidt}
		\address{Albert-Ludwigs University of Freiburg, Eckertstr. 1, 79104 Freiburg, Germany.}
    \email{ thorsten.schmidt@stochastik.uni-freiburg.de}
    \date{\today. We thank Martin Larsson for his insightful suggestions. The first author thanks the Consejo Nacional de Ciencia y Tecnolog\'ia (CONACyT) for generous support. }

\maketitle

\vspace{2mm}

\begin{abstract}
Time homogeneous polynomial processes are Markov processes whose moments can be calculated easily through matrix exponentials. In this work, we develop a notion of time inhomogeneous polynomial processes where the coeffiecients of the process may depend on time. A full characterization of this model class is given by means of their semimartingale characteristics. We show that in general, the computation of moments by matrix exponentials is no longer possible. 
As an alternative we explore a connection to Magnus series for fast numerical approximations. 

Time-inhomogeneity is important in a number of applications: in term-structure models, this allows a perfect calibration to available prices. In electricity markets, seasonality comes naturally into play and have to be captured by the used models.  The model class studied in this work extends existing models, for example Sato processes and time-inhomogeneous affine processes. 
\end{abstract}

\keywords{\noindent Keywords: polynomial processes, affine processes, Magnus series, time-inhomogeneous Markov processes,  seasonality, electricity markets, credit risk, interest rates.}

\section{Introduction}
Many applications of Markov processes, in particular in mathematical finance, profit from an efficient computability of moments. This is an important feature for the computation of option prices and moment estimators, for example.
This motivates the study of \emph{polynomial processes}, introduced in \cite{CuchieroKellerResselTeichmann2012}, as they satisfy this  property directly by definition: a polynomial process $X$ is a Markov process, such that the expected value of a polynomial of the process at a future time  is given by a polynomial (of the same degree) of the initial value of the process.

It turns out that this property is shared by the well-known \emph{affine processes}, given the existence of the treated moments. Further important examples classify as polynomial processes:  
\emph{Jacobi processes}, which are processes living on a compact subset of the whole space, see for example \cite{DelbaenShirakawa02} and \cite{GourierouxJasiak06}, and 
\emph{Pearson diffusions}, exhibiting a linear drift and a squared diffusion coefficient and being studied in 
\cite{FormanSorensen08}.

Besides this, polynomial processes have gained increasing popularity in finance, see \cite{FilipovicLarsson2016}. The fields of applications range from term-structure modelling (see \cite{ChengTechranchi15} and \cite{Grbac15}) to variance swaps (see \cite{FilipovicGourierMancini14}), LIBOR models (see \cite{GlauGrbacKellerRessel2014}) and variance reduction for option pricing and hedging as in  \cite{CuchieroKellerResselTeichmann2012}.

All the above models are homogeneous in time and therefore do not allow for seasonality, for example. \emph{Time inhomogeneity} does also play an important role in term-structure modelling as it allows for perfect calibration to the observed market data. 
A natural example where all the features of time-inhomogeneous polynomial comes into play, namely bounded state space and seasonality, are spot markets of electricity and we refer to \cite{Agoitia2017} for a detailed application.

It is the purpose of this paper to characterize and study time-inhomogeneous polynomial processes. 
While in time-homogeneous polynomial processes, moments of all orders  can easily be computed by means of a matrix exponentials, this nice property does not persist in the time-inhomogeneous case. Instead, we present an approximation by means of Magnus series, which still offers a sufficient degree of tractability. 
 small application to this case.

The article is organized as follows: in Section \ref{sec2} we introduce time-inhomogeneous polynomial processes. Next, we explore in Section \ref{sec:evolution} the connection between evolution systems and transition operators from time-inhomogeneous polynomial processes and present a characterization result. In Section \ref{sec:Feller} we show that time-in\-ho\-mo\-ge\-neous polynomial processes with compact state space are Feller process. Furthermore we present the relationship between time-inhomogeneous polynomial processes and semimartingales in Section \ref{sec:semimartingale}. Some examples of time-inhomogeneous polynomial processes, as well as some counter-examples, are presented in Section \ref{sec:examples}. We present the Magnus series to compute moments of time-inhomogeneous polynomial processes in Section \ref{sec:computation}. Additionally, this section  contains some further examples of time-inhomogeneous polynomial processes whose moments can be computed using the results presented in the previous section. Some auxiliary results are relegated to  Appendix \ref{sec:appendix}.

\section{Polynomial processes}\label{sec2}
Our goal is to study time-inhomogeneous Markov processes with state space $S \subset \R^n$. The state space $S$ is only assumed to be measurable and to be sufficiently rich to uniquely identify polynomials. By  $\ccS$ we denote the Borel $\sigma$-algebra on $S$ and by $\ccS_+$ we denote the space of non-negative measurable functions $f:S \to \R$. 
For a convenient notation we fix a final time horizon $T>0$ and introduce the triangle 
\begin{equation}
 \Delta:= \left\{(s,t)|0\leq s\leq t\leq T \right\}.
 \end{equation} 
The Markov process $S$ can be characterized by a family of (time-inhomogeneous) transition operators $\left(P_{s,t}\right)_{(s,t)\in \Delta}$, given by
\begin{align}\label{eq1}
P_{s,t}f(x)=\int_{S}f(\xi)p_{s,t}(x,d\xi),  \end{align}
for all $x\in S$ and $f\in\ccS_+$. Here, $p_{s,t}(x,A)$ denotes the transition probability function, which  satisfies the following well-known properties (compare \cite{RevuzYor}, Chapter III, for further details):  
\begin{enumerate}[(i)]
\item $p_{s,t}(x,A)$ is jointly $\ccS$-measurable in $(s,t,x)$, for all  $A \in \ccS$,
\item For fixed $(s,t)\in \Delta$ and $x\in S$, $p_{s,t}(x,\cdot)$ is a probability measure  on $\ccS$,
\item For all $x\in S$ and $t\in[0,T]$, $p_{t,t}(x,\cdot)=\delta_x$, where $\delta_x$ denotes the Dirac measure
,
\item The \emph{Chapman-Kolmogorov} equations hold
$$p_{s,u}(x,A)=\int_{S}p_{s,t}(x,dy)p_{t,u}(y,A), \quad 0 \le s \le t \le u \le T.$$
\end{enumerate}
These properties immediately translate into the associated properties for the transition operators. In particular, the Chapman-Kolmogorov equations yield
$$ P_{s,u}f(x)=P_{s,t}P_{t,u}f(x), \qquad \text{for all }f \in \ccS_+. $$

It will be convenient to use the following  multi-index notation:
for $\mathbf{k}=(k_1,...,k_d)\in\mathbb{N}_0^d$  let $|\mathbf{k}|=k_1+...+k_d$ and 
$  x^{\mathbf{k}}=x_1^{k_1}\cdots x_d^{k_d}$.
By $\ccP_m(S)$ we denote the finite dimensional vector space of polynomials up to degree $m\geq 0$ on $S$, i.e. 
\begin{align} \label{def:P}
\ccP_m(S):= \{S\owns x\mapsto \sum_{|\mathbf{k}|=0}^m \alpha_{\mathbf{k}} x^{\mathbf{k}} : \alpha_{\mathbf{k}}\in\mathbb{R}\}.
\end{align}
The dimension of $\ccP_m(S)$ is denoted by $N<\infty$ and depends on $S$. Further, denote $\ccP_m:=\ccP_m(\R^d)$.

Our aim is to consider general state spaces as in  \cite{FilipovicLarsson2016} which requires some additional care as the repesentation of a polynomial as in \eqref{def:P} might not be unique. Having this in mind, we define  for  a polynomial $f \in \ccP_m$ its representations  by
$ \ccR(f)=\{ \alpha: \sum_{|\mathbf{k}|=0}^m \alpha_{\mathbf{k}} x^{\mathbf{k}} = f \} $ and
 consider the norm
\begin{align} \label{def:normm}
		\|f\|_m=\inf_{\alpha \in \ccR(f)} \max_{0\leq |\bk|\leq m}|\alpha_{\bk}|. \end{align}

Now we are in the position to introduce an appropriate class of time-dependent polynomials. Let $\tilde{S}=\Delta\times S$ be the \emph{augmented state space} (augmented by time represented by the triangle $\Delta$). By $C^1(\Delta,\mathbb{R})$ we denote the space of once continuously differentiable functions from $\Delta$ to $\R$.
 The  vector space of \emph{time-dependent polynomials} of degree at most $m$ is defined by
\begin{equation}\label{PolSpace}
\tilde{\ccP}_m(S):= \lbrace\tilde{S}\ni (s,t,x)\mapsto \sum_{|\mathbf{k}|=0}^m \alpha_{\mathbf{k}}(s,t)x^{\mathbf{k}} : \alpha_{\mathbf{k}}\in C^1(\Delta,\mathbb{R})\rbrace.
\end{equation}
In contrast to $\ccP_m(S)$, $\tilde{\ccP}_m(S)$ is an infinite dimensional vector space. In Proposition \ref{BanachPk} we show that under a suitable norm, $\tilde \ccP_m(S)$ is indeed a Banach space.

\begin{definition}
We call the transition operators $(P_{s,t})_{(s,t)\in \Delta}$ \emph{$m$-polynomial} if for all $k \in \{0,\dots,m\}$ and all $f \in \ccP_k(S)$
\begin{align*}
\tilde S \ni (s,t,x) \mapsto P_{s,t} f(x) \quad \text{is in } \tilde \ccP_k(S).
\end{align*}
If $(P_{s,t})_{(s,t)\in \Delta}$ is $m$-polynomial for all $m \ge 0$, then it is simply called \emph{polynomial}.
\end{definition}

\begin{remark}Let us emphasize some points related to this definition.
\begin{enumerate}[i)]
\item Note that we implicitly assume that moments of order up to $m$ exist, i.e. 
$$ P_{s,t} |f|(x) = \int_S |f(\xi)| p_{s,t}(x,d\xi) < \infty, $$
for every $f \in \ccP_m(S)$, $x \in S$ and $(s,t) \in \Delta$. Moreover, note that for any polynomial $f$ which vanisches on the state space $S$, $P_{s,t}f =0$, such that the transition operators are well-defined.
\item  The transition operators $(P_{s,t})_{(s,t)\in \Delta}$ are called \emph{time-homogeneous}, if 
$$ P_{s,t}=P_{0,s-t}=:P_{s-t}.$$ This is the case covered in \cite{CuchieroKellerResselTeichmann2012}, however under the weaker assumption of continuity of $s \mapsto P_s f(x)$ at $s=0$ instead of continuous differentiability which we require here. In the time-homogeneous case continuity at zero can be extended to the whole line by means of semi-group methods. The reason for this is that in the time-homogeneous case, the transition operator $P_{s,t}$ can be reduced to a Markov semi-group. In the more general case we consider here this will no longer be possible.
\item A very efficient trick for studying time-inhomogeneous Markov processes is to include time as an additional coordinate of the process. The transformed process is called \emph{space-time process}, compare \cite{Boettcher2014} for such a treatment. Applying this trick to  polynomial processes would restrict the type of time-inhomogeneity severely, as the dependence on time would be required to be of polynomial form.
\end{enumerate}
\end{remark}


In the following proposition, we show that polynomial processes can equivalently be characterized by the following two, simpler conditions. 
\begin{proposition}\label{prop:polyequi}
The transition operators $(P_{s,t})_{(s,t)\in \Delta}$ are $m$-polynomial
if and only if the following two conditions hold:
\begin{enumerate}[i)]
\item For all $k \in \{0,\dots,m\}$, all $f \in \ccP_k(S)$, and $(s,t)\in \Delta$,
\begin{align*}
S \ni x \mapsto P_{s,t} f(x) \quad \text{is in } \ccP_k(S),
\end{align*}
\item for all $k \in \{0,\dots,m\}$, all $f \in \ccP_k(S)$, and $x\in S$,
$$ \Delta \ni (s,t) \mapsto P_{s,t} f(x) \quad \text{is in }C^1(\Delta). $$
\end{enumerate}
\end{proposition}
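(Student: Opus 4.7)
The forward implication is immediate from the definition. If $(s,t,x) \mapsto P_{s,t}f(x) = \sum_{|\mathbf{k}|\leq k}\alpha_{\mathbf{k}}(s,t)\,x^{\mathbf{k}}$ lies in $\tilde\ccP_k(S)$, then fixing $(s,t)$ exhibits $P_{s,t}f(\cdot)$ as an element of $\ccP_k(S)$, giving (i); fixing $x$ writes $(s,t)\mapsto P_{s,t}f(x)$ as a finite linear combination of $C^1(\Delta)$ functions, giving (ii).

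For the converse, fix $k\in\{0,\dots,m\}$ and $f\in\ccP_k(S)$. By (i), $P_{s,t}f\in\ccP_k(S)$ for every $(s,t)\in\Delta$; the task is to produce a \emph{jointly} $C^1$-regular family of coefficients. My plan is to exploit the finite dimensionality $N_k := \dim\ccP_k(S)<\infty$: from the spanning set $\{x^{\mathbf{k}}:|\mathbf{k}|\leq k\}$ I extract multi-indices $\mathbf{k}_1,\dots,\mathbf{k}_{N_k}$ such that the monomials $x^{\mathbf{k}_1},\dots,x^{\mathbf{k}_{N_k}}$ form a basis of $\ccP_k(S)$ as a space of functions on $S$. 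Then for every $(s,t)$ there are unique scalars $\alpha_i(s,t)$ with $P_{s,t}f(x)=\sum_{i=1}^{N_k}\alpha_i(s,t)\,x^{\mathbf{k}_i}$ for all $x\in S$. Since these basis monomials are linearly independent on $S$, I can pick points $x^{(1)},\dots,x^{(N_k)}\in S$ making the matrix $M_{ji}:=(x^{(j)})^{\mathbf{k}_i}$ invertible; evaluating the previous identity at $x^{(j)}$ gives the linear system $P_{s,t}f(x^{(j)})=\sum_i M_{ji}\,\alpha_i(s,t)$, whence $\alpha_i(s,t)=\sum_j (M^{-1})_{ij}\,P_{s,t}f(x^{(j)})$. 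Hypothesis (ii) now forces each $\alpha_i\in C^1(\Delta)$. Setting $\alpha_{\mathbf{k}}\equiv 0$ for all multi-indices not among $\mathbf{k}_1,\dots,\mathbf{k}_{N_k}$ delivers a representation of $(s,t,x)\mapsto P_{s,t}f(x)$ in $\tilde\ccP_k(S)$.

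The main subtlety, and where a naive approach would fail, is the potential non-uniqueness of the monomial representation flagged in the discussion around \eqref{def:normm}: a priori one could write $P_{s,t}f(x)=\sum\alpha_{\mathbf{k}}(s,t)x^{\mathbf{k}}$ with many different, possibly wildly irregular coefficient families, so one cannot simply read off the $\alpha_{\mathbf{k}}$'s from any pointwise representation. The resolution is precisely to abandon the overcomplete generating set $\{x^{\mathbf{k}}:|\mathbf{k}|\leq k\}$ and work instead with an honest basis of the finite-dimensional function space $\ccP_k(S)$; this restores uniqueness, and uniqueness combined with the invertible Vandermonde-type evaluation then transports the $C^1$ regularity of the pointwise maps $(s,t)\mapsto P_{s,t}f(x^{(j)})$ to the coefficients. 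Beyond this single observation, no measurability, tightness, or further structural input is needed.
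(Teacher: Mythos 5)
Your proof is correct, and at its core it is the same argument as the paper's: extract from the monomials a genuine basis of the finite-dimensional space $\ccP_k(S)$ (thereby handling the non-uniqueness issue you rightly flag), observe that the coefficients of $P_{s,t}f$ in that basis are the images of $P_{s,t}f$ under linear functionals, and transport the $C^1$ regularity of the pointwise maps $(s,t)\mapsto P_{s,t}f(x)$ to those coefficients. The difference is in how that last transfer is executed. The paper writes $\alpha_i^f(s,t)=v_i^*(P_{s,t}f)$ for the dual basis $v_i^*$, invokes Lemma~\ref{lem:weak} to upgrade pointwise $C^1$ regularity to $C^1$ regularity of $(s,t)\mapsto P_{s,t}f$ in $(\ccP_k(S),\|\cdot\|_k)$, and then uses boundedness of $v_i^*$ (with the differentiability part dispatched rather tersely by ``in the same way''). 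You instead realize the dual basis concretely: choose evaluation points $x^{(1)},\dots,x^{(N_k)}$ making the matrix $M_{ji}=(x^{(j)})^{\mathbf{k}_i}$ invertible --- such points exist because the evaluation functionals span the dual space, which is exactly the fact underlying the paper's proof of Lemma~\ref{lem:weak} --- so that each $\alpha_i$ is an explicit constant-coefficient linear combination of the maps $(s,t)\mapsto P_{s,t}f(x^{(j)})$, and $C^1$ regularity is then immediate from hypothesis (ii). Your route bypasses the norm-topology detour entirely and makes the differentiability step fully explicit, at the cost of nothing; the paper's route establishes along the way the slightly stronger statement that $(s,t)\mapsto P_{s,t}f$ is $C^1$ as a Banach-space-valued map, which it reuses later (e.g.\ in Proposition~\ref{Prop:strongevo}).
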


\begin{proof}
Necessity immediately follows from the definition. For sufficiency, let $0 \leq k \leq m$ and $f \in \ccP_k(S)$. 
A subfamily of 
the mappings $S \ni x \mapsto x^{\mathbf l}, 0 \leq |\mathbf l| \leq k,$ will constitute a basis of $\ccP_k(S)$ which we denote by $ \{v_1,\ldots, v_N\}$.
Condition i) yields that   $P_{s,t}f \in \ccP_k(S)$ for every $(s,t) \in \Delta$. 
Hence there exist coefficients $\alpha_{j}^f(s,t), 0 \leq j \leq N$ such that
	\[P_{s,t}f=\sum_{j=1}^N\alpha_j^f(s,t)v_j\]
and we need to show that $\alpha_j^f\in C^1(\Delta), \ j=1,...,N$.   
 To this end, let $v_1^*,..., v_N^*$ denote the dual basis of $\left\{v_1,..., v_N\right\}$, i.e. each $v_j^*$ is an element of the dual space $(\ccP_k(S))'$  and  $v_i^*(v_j)=\ind{i=j}$. Then 
 \begin{equation*}
v_i^*(P_{s,t}f) = v_i \left(\sum_{j=1}^N\alpha_j^f(s,t)v_j\right) = \sum_{j=1}^N\alpha_j^f(s,t)v_i^*(v_j) = \alpha_i^f(s,t).   
 \end{equation*}
By condition $ii)$ the map 
$ \Delta \ni (s,t) \mapsto P_{s,t}f(x)$
is continuously differentiable for all $x \in S$. Hence, by Lemma \ref{lem:weak}, also the map
$ \Delta \ni (s,t) \mapsto P_{s,t}f \in (\ccP_k(S), \|.\|_k)$
is continuously differentiable. But the linear functionals $v_i^*$ (acting on a finite dimensional space) are automatically continuous and bounded. 
Denoting their operator norm by $\VERT v_i^*\VERT$, i.e.
$$ \VERT v_i^*\VERT = \max_{0 \neq g \in \ccP_k(S)} \frac{|v_i^*(g)|}{\|g\|_k},$$
we  obtain that
\begin{eqnarray*}
  |v_i^*(P_{s,t}f)-v_i^*(P_{s_0,t_0}f)| =   |v_i^*(P_{s,t}f-P_{s_0,t_0}f)| \leq \VERT v_i^* \VERT \cdot \|P_{s,t}f-P_{s_0,t_0}f\|_k.
\end{eqnarray*}
But here the right-hand side goes to $0$ for $(s,t) \to (s_0,t_0)$ by the continuity of $(s,t) \mapsto P_{s,t}f \in (\ccP_k(S),\|.\|_k)$ and so the continuity  of
$ \Delta \ni (s,t) \mapsto v_i^*(P_{s,t}f) = \alpha_i^f(s,t)$
follows. In  the same way we see that the $\alpha_i$'s are even continuously differentiable. 
\end{proof}

\subsection{The associated polynomial process} \label{sec:pp}
Since we are interested in Markovian processes which are semi-martingales we assume that $\Omega$ is the space of c\`adl\`ag functions $\omega:[0,T]\to S$, $X$ is the coordinate process and the filtration is the canonical filtration $(\cF_t^0)_{0 \le t \le T}$ generated by $X$. Moreover, $\ccF=\ccF_T^0$. Then, by Theorem III.1.5 in \cite{RevuzYor}, for any initial distribution and transition operators $(P_{s,t})_{(s,t)\in \Delta}$ there exists a unique probability measure on $(\Omega,\ccF)$ such that $X$ is Markovian with respect to the canonical filtration with transition operator $(P_{s,t})_{(s,t)\in \Delta}$. 

This set-up allows us to refer to a \emph{polynomial process} $X$ which is a Markov process with polynomial evolution system without imposing the Feller property (see Section \ref{sec:Feller}).  We also use the now obvious definition of $m$-polynomial processes.

\section{Evolution systems}\label{sec:evolution}
In this section we show that the transition operators from our polynomial processes in fact lead to an evolution system. Evolution systems replace semi-groups in the time-inhomogeneous case, but are far less understood. We refer to \cite{Pazy92,GulisashviliCasteren2006} for a more detailed exposition.

\begin{definition}\label{Def:evol} 
A two parameter family of bounded linear operators $U=(U_{s,t})_{(s,t) \in \Delta}$ on a Banach space $(B,\|\cdot\|)$ is called an \emph{evolution system} if the following two conditions are satisfied:

\begin{enumerate}[i)]
	\item $U_{s,s}=I$ for all $0 \leq s \leq T$ (where $I$ denotes the identity operator),
	\item $U_{r,s}U_{s,t}=U_{r,t}$, for all $0\leq r\leq s\leq t\leq T.$
\end{enumerate}
\end{definition}

The importance of evolution systems stems from the fact that,  we can associate an evolution system to a Markov process through its transition probability function. Note that a Markov semi-group $(P_t)_{0 \le t \le T}$ indeed induces an evolution system via $U(s,t)=P_{t-s}$, $(s,t) \in \Delta$.
The evolution system $(U_{s,t})_{(s,t) \in \Delta}$ is called \emph{strongly continuous} if  for each $(s,t) \in \Delta$ and $f \in B$, 
	\[\lim_{\Delta \ni (v,w)\rightarrow (s,t)}\left\|U_{v,w}f-U_{s,t}f\right\|=0.
\]

For a strongly continuous evolution system we define its \emph{infinitesimal generator} as the family $\ccG=(\ccG_s)_{0 \le s \le T}$ of linear operators with
$$\ccG_sf:=\lim_{h\downarrow 0}\frac{1}{h}(U_{s,s+h}f-f).$$
The domain 
$ \ccD(\ccG_s) $ of $\ccG_s$ is the subspace of $B$ where the above limit exists. 
The following lemma states the Kolmogorov equations for a strongly continuous evolution system.
Equation \eqref{Kolmogorovbackward} is called the \emph{Kolmogorov backward equation}, and  \eqref{Kolmogorovforward} is called the \emph{Kolmogorov forward equation}. A proof of this elementary result can be found in    \cite[Lemma 2.1]{Rueschendorf15}.

\begin{lemma} \label{KolmogorovEqs}
For the strongly continuous evolution system $U$ with infinitesimal generator $\ccG$  it holds that:
\begin{enumerate}[i)]

	\item if $f \in B$ and $s\mapsto U_{s,t}f$ is right-differentiable for $0\leq s<t$, then 
	\begin{align}\label{Kolmogorovbackward}
		\frac{d^+}{ds}U_{s,t}f=-\ccG_sU_{s,t}f.
	\end{align}
		In particular $U_{s,t}f\in\ccD(\ccG_s)$ for $0\leq s<t$.
		\item If $f\in \ccD(\ccG_s)$ for $0\leq s<t$, then 
	\begin{align}\label{Kolmogorovforward}
		\frac{d^+}{dt}U_{s,t}f=U_{s,t}\ccG_tf.
	\end{align}
		
\end{enumerate}

\end{lemma}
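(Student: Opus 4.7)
The plan is to derive both Kolmogorov equations from the evolution property $U_{r,s}U_{s,t}=U_{r,t}$ together with strong continuity of $U$.

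For the backward equation, the starting point is the identity $U_{s,t} = U_{s,s+h} U_{s+h,t}$, which immediately gives
\begin{equation*}
\frac{U_{s+h,t} f - U_{s,t} f}{h} = -\frac{1}{h}\,(U_{s,s+h} - I)\,U_{s+h,t} f.
\end{equation*}
By hypothesis the left-hand side converges to some $g \in B$ as $h \downarrow 0$, so the right-hand side does too. To identify $g$ with $-\ccG_s U_{s,t} f$, I would rewrite
\begin{equation*}
\frac{1}{h}(U_{s,s+h} - I)\,U_{s,t} f \;=\; \frac{1}{h}(U_{s,s+h} - I)\,U_{s+h,t} f \;-\; (U_{s,s+h} - I)\,\phi_h,
\end{equation*}
where $\phi_h := \tfrac{1}{h}(U_{s+h,t} f - U_{s,t} f) \to g$ in norm. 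The first term on the right converges to $-g$ by the identity above. For the error term I would split
\begin{equation*}
(U_{s,s+h} - I)\,\phi_h \;=\; U_{s,s+h}(\phi_h - g) \;+\; (U_{s,s+h} g - g):
\end{equation*}
the second summand vanishes by strong continuity, and the first vanishes provided $\|U_{s,s+h}\|$ is locally bounded in $h$. Combining, $\tfrac{1}{h}(U_{s,s+h} - I) U_{s,t} f$ converges to $-g$, which by definition means $U_{s,t} f \in \ccD(\ccG_s)$ and $\ccG_s U_{s,t} f = -g$, yielding \eqref{Kolmogorovbackward}.

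The forward equation is easier: from $U_{s,t+h} = U_{s,t} U_{t,t+h}$,
\begin{equation*}
\frac{U_{s,t+h} f - U_{s,t} f}{h} \;=\; U_{s,t}\,\frac{U_{t,t+h} f - f}{h}.
\end{equation*}
If $f \in \ccD(\ccG_t)$, the right factor tends to $\ccG_t f$ by definition of $\ccG_t$; since $U_{s,t}$ is a bounded linear operator we pull the limit inside and obtain \eqref{Kolmogorovforward}.

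The main obstacle is the backward equation, specifically controlling $(U_{s,s+h} - I)\phi_h$, which reduces to local boundedness of the family $\{U_{s,s+h}\}$ for small $h \ge 0$. In the polynomial-process setting this is automatic because $U$ restricts to operators on the finite-dimensional space $\ccP_k(S)$, so the bound follows from strong continuity on the compact interval $[s, s+h_0]$; in the general Banach-space formulation as stated, the same conclusion is standard for a $C_0$-evolution system and is the reason we may refer the proof to \cite[Lemma 2.1]{Rueschendorf15}.
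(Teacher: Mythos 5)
The paper does not actually prove this lemma; it simply defers to \cite[Lemma 2.1]{Rueschendorf15}. Your argument is a correct, self-contained proof along the standard lines, so it supplies something the paper omits. The forward equation is handled exactly right: the evolution property plus boundedness of $U_{s,t}$ lets you pass the limit through. For the backward equation you correctly identify the one real issue --- controlling $(U_{s,s+h}-I)\phi_h$ --- and resolve it via local boundedness of $\|U_{s,s+h}\|$, which in the general Banach setting follows from strong continuity on a compact parameter interval together with the uniform boundedness principle, and in the paper's application is immediate since the operators act on the finite-dimensional space $(\ccP_k(S),\|\cdot\|_k)$. One small algebra slip: your displayed decomposition
\begin{equation*}
(U_{s,s+h} - I)\,\phi_h \;=\; U_{s,s+h}(\phi_h - g) \;+\; (U_{s,s+h} g - g)
\end{equation*}
is not an identity; the left side is $U_{s,s+h}\phi_h-\phi_h$ while the right side equals $U_{s,s+h}\phi_h-g$, so the correct decomposition carries an extra term $(g-\phi_h)$. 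Since $\phi_h\to g$ in norm, that term vanishes and the conclusion is unaffected, but the display should be corrected. With that fix the proof is complete and gives $U_{s,t}f\in\ccD(\ccG_s)$ with $\ccG_sU_{s,t}f=-\frac{d^+}{ds}U_{s,t}f$ as claimed.
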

Let us emphasize that the equations above hold with respect to the Banach space $(B,\|.\|)$. Our next step is to show that the transition operators of an $m$-polynomial process form indeed a strongly continuous evolution system.
In the following we will call this evolution system the \emph{associated evolution system} to a Markov process. 

\begin{proposition}\label{Prop:strongevo} 
If the transition operators $(P_{s,t})_{(s,t) \in \Delta}$ are $m$-polynomial, they form a strongly continuous evolution system on $(\ccP_k(S), \|.\|_k)$ for every $0 \leq k \leq m$. 
\end{proposition}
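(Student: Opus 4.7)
My plan is to verify the two algebraic requirements in Definition \ref{Def:evol} plus strong continuity, exploiting the fact that each $\ccP_k(S)$ is finite-dimensional so that a lot of functional-analytic bookkeeping becomes trivial.

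First, I would observe that $\ccP_k(S)$ has finite dimension $N$, so $(\ccP_k(S),\|\cdot\|_k)$ is automatically a Banach space and every linear operator on it is bounded. The $m$-polynomial assumption, via Proposition \ref{prop:polyequi}(i), ensures that $P_{s,t}$ maps $\ccP_k(S)$ into itself for each $(s,t) \in \Delta$ and $0 \le k \le m$; linearity is immediate from the integral representation \eqref{eq1}, and boundedness then follows automatically on a finite-dimensional space.

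Next I would check the two defining identities of an evolution system. Property $P_{s,s} = I$ is immediate from the fact $p_{s,s}(x,\cdot) = \delta_x$ noted in (iii) above, since $P_{s,s}f(x) = \int_S f(\xi)\,\delta_x(d\xi) = f(x)$. The composition rule $P_{r,s}P_{s,t} = P_{r,t}$ for $r\le s\le t$ is exactly the Chapman--Kolmogorov identity already translated to the operator level in the introductory discussion of $(P_{s,t})$.

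The key step is strong continuity. Fix $(s_0,t_0) \in \Delta$ and $f \in \ccP_k(S)$. As in the proof of Proposition \ref{prop:polyequi}, I would pick a basis $\{v_1,\ldots,v_N\}$ of $\ccP_k(S)$ consisting of monomials $x^{\bk_j}$, and write
\[
P_{s,t}f \;=\; \sum_{j=1}^N \alpha_j^f(s,t)\,v_j, \qquad (s,t) \in \Delta,
\]
where the coefficients $\alpha_j^f$ lie in $C^1(\Delta)$ by Proposition \ref{prop:polyequi}(ii) and the dual-basis argument used there. Because each $v_j$ is a monomial, the family of numbers $\alpha_j^f(s,t)-\alpha_j^f(s_0,t_0)$ (supplemented by zeros for the remaining multi-indices) yields one admissible representation in the sense of $\ccR(\cdot)$ of the polynomial $P_{s,t}f - P_{s_0,t_0}f$. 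Consequently the norm \eqref{def:normm} is bounded by
\[
\bigl\|P_{s,t}f - P_{s_0,t_0}f\bigr\|_k \;\le\; \max_{1\le j\le N}\bigl|\alpha_j^f(s,t) - \alpha_j^f(s_0,t_0)\bigr|,
\]
and the right-hand side tends to $0$ as $(s,t) \to (s_0,t_0)$ by continuity of the $\alpha_j^f$.

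I do not expect a serious obstacle. The one subtlety worth pointing out is that $\|\cdot\|_k$ is defined as an infimum over representations, since on a general state space $S$ different coefficient vectors can represent the same polynomial. It is therefore essential to produce a \emph{specific} admissible representation whose max-coefficient is under our control, and choosing $\{v_j\}$ to consist of monomials is precisely what makes this possible.
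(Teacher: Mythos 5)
Your proposal is correct and follows essentially the same route as the paper: the algebraic properties and boundedness are handled identically via finite-dimensionality, and strong continuity is reduced to continuity in $(s,t)$ of scalar data coming from Proposition \ref{prop:polyequi}(ii). The only cosmetic difference is the last step: the paper invokes Lemma \ref{lem:weak} to pass from pointwise convergence of $P_{s,t}f(x)$ to convergence in $\|\cdot\|_k$, whereas you bound the infimum norm directly by exhibiting an explicit monomial representation of $P_{s,t}f-P_{s_0,t_0}f$ with continuous coefficients --- but since the continuity of those coefficients is itself obtained via the dual-basis argument (and hence Lemma \ref{lem:weak}), the same lemma is doing the work in both versions.
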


\begin{proof}
It is straightforward to verify that $(P_{s,t})_{(s,t) \in \Delta}$ is a family of linear operators satisfying points i) and ii) of Definition \ref{Def:evol}. Furthermore they are bounded, as linear operators  on the finite-dimensional normed space $(\ccP_k(S),\|\cdot\|_k)$ are automatically bounded. 
All that remains is to show the strong continuity, i.e. that for every $f \in \ccP_k(S)$ we have
$$ \| P_{s,t}f -P_{u,v}f \|_k \to 0 \qquad \text{for} \quad \Delta \ni (s,t) \to (u,v).$$ 
By Lemma \ref{lem:weak} this is equivalent to $|P_{s,t}f(x) -P_{u,v}f(x) | \to 0$ for all $x \in S$. This, however, follows from Proposition \ref{prop:polyequi} ii).
\end{proof}

\subsection{Infinitesimal generators}
In this subsection we will show that the infinitesimal generator of a polynomial process (to be introduced below) actually coincides with the infinitesimal generator of the associated evolution system.
The \emph{infinitesimal generator}  of the transition operators $(P_{s,t})_{(s,t)\in \Delta}$ is the family of linear operators $\ccH=(\ccH_t)_{0 \le t \le T}$ satisfying
$$(\ccH_sf)(x) :=\lim_{h\downarrow 0}\frac{1}{h}(P_{s,s+h}f-f)(x), \quad x \in S.$$
Again, the domain $\ccD(\ccH_s)$ is the set
of measurable functions $f: S \to \mathbb{R}$ for which the above limit exists for all $x \in S$.

\begin{lemma}\label{Lem:Pointwise}
Let the transition operators $(P_{s,t})_{(s,t) \in \Delta}$ be $m$-polynomial with  infinitesimal generator $\ccH$. Then the following holds:
\begin{enumerate}
    \item[i)] $\ccP_m(S) \subset \ccD(\ccH_s)$ for all $0 \leq s < T.$
 	\item[ii)] If $f \in \ccP_k(S)$ for $0\leq k \leq m$ and $P_{s,t} f(x)= \sum_{|\mathbf{l}|=0}^k \alpha_{\mathbf l}(s,t) x^{\mathbf l}$, then
\begin{equation}
  \label{eq:9}
(\ccH_sf)(x)=\sum_{|\mathbf{l}|=0}^k D_2^+\alpha_{\mathbf{l}}(s,s)x^{\mathbf{l}}, \quad x \in S.  
\end{equation}
\item[iii)] $\ccH_s(\ccP _k(S)) \subset \ccP_k(S)$ for all $0 \leq k \leq m$ and all $0\leq s <T$.
\end{enumerate}
\end{lemma}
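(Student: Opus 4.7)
The plan is to establish all three assertions simultaneously by carrying out the defining pointwise differentiation of $P_{s,t}f$ with respect to $t$ at $t = s$, directly using the polynomial representation guaranteed by the $m$-polynomial assumption. The finite-dimensionality of $\ccP_k(S)$, combined with the $C^1$-regularity of the coefficient functions built into the definition of $\tilde\ccP_k(S)$, is exactly what will make the limit exist and land again in $\ccP_k(S)$.

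I first attack (ii). Fix $0 \le k \le m$ and $f \in \ccP_k(S)$. Since $(P_{s,t})$ is $m$-polynomial and $k \le m$, the map $(s,t,x) \mapsto P_{s,t}f(x)$ lies in $\tilde\ccP_k(S)$, so there exist coefficient functions $\alpha_{\mathbf{l}} \in C^1(\Delta,\R)$, $0 \le |\mathbf{l}| \le k$, with
\begin{equation*}
P_{s,t}f(x) = \sum_{|\mathbf{l}|=0}^k \alpha_{\mathbf{l}}(s,t)\, x^{\mathbf{l}}, \qquad (s,t,x) \in \tilde S.
\end{equation*}
Specialising to $t = s$ and using $p_{s,s}(x,\cdot) = \delta_x$, one obtains $f(x) = \sum_{|\mathbf{l}|=0}^k \alpha_{\mathbf{l}}(s,s)\, x^{\mathbf{l}}$ for every $x \in S$. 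Subtracting, dividing by $h>0$ with $s + h \le T$, and using that the sum over $\mathbf l$ is finite gives
\begin{equation*}
\frac{1}{h}\bigl(P_{s,s+h}f(x) - f(x)\bigr) = \sum_{|\mathbf{l}|=0}^k \frac{\alpha_{\mathbf{l}}(s,s+h) - \alpha_{\mathbf{l}}(s,s)}{h}\, x^{\mathbf{l}}, \qquad x \in S.
\end{equation*}
Since each $\alpha_{\mathbf{l}}$ is continuously differentiable on $\Delta$, each difference quotient converges to the right partial derivative $D_2^+\alpha_{\mathbf{l}}(s,s)$ as $h \downarrow 0$ (this one-sided derivative is what the geometry of $\Delta$ allows on the diagonal), and one may pass to the limit term by term to recover exactly the formula~\eqref{eq:9}.

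The remaining two items follow at once. The computation above already exhibits the defining limit of $\ccH_s f(x)$ for every $x \in S$, so $f \in \ccD(\ccH_s)$; since $f \in \ccP_m(S)$ was arbitrary this yields (i). The right-hand side of~\eqref{eq:9} is by construction a polynomial in $x$ of total degree at most $k$, which shows $\ccH_s f \in \ccP_k(S)$ and proves (iii).

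The only delicate point I anticipate is the potential non-uniqueness of the coefficients $\alpha_{\mathbf{l}}(s,t)$ when $S$ does not separate monomials of degree $\le k$: different representations could lead to different-looking sums $\sum D_2^+\alpha_{\mathbf{l}}(s,s)\, x^{\mathbf{l}}$. However, $\ccH_s f$ is defined through an intrinsic pointwise limit independent of any representation, so any two admissible choices of $\alpha_{\mathbf{l}}$ must yield the same function on $S$. Hence~\eqref{eq:9} is unambiguous as an equality in $\ccP_k(S)$, and no additional identification argument is needed.
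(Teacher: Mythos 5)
Your argument is correct and follows essentially the same route as the paper's own proof: both write $P_{s,t}f(x)=\sum_{|\mathbf{l}|=0}^k \alpha_{\mathbf{l}}(s,t)x^{\mathbf{l}}$ using the $m$-polynomial property, form the difference quotient $\frac{1}{h}(P_{s,s+h}f-f)(x)$, and pass to the limit term by term using $\alpha_{\mathbf{l}}\in C^1(\Delta)$, with i) and iii) read off from the resulting formula. Your closing remark on the well-definedness of \eqref{eq:9} under non-unique representations is a sensible extra precaution that the paper leaves implicit.
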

\begin{proof}
Let $0 \leq k \leq m$. Since $X$ is $m$-polynomial, $P_{.,.}f \in \tilde \ccP(S)$. For $0\leq s < T$ and $0<h\le T-s$  we obtain
\begin{align*}
\frac 1 h \left( P_{s,s+h}f-f \right)(x)
  &= \sum_{|\mathbf l|=0}^k \frac 1 h \big( \alpha_{\mathbf{l}}(s,s+h)-\alpha_{\mathbf{l}}(s,s) \big) x^{\mathbf{l}} 
  \to \sum_{|\mathbf l|=0}^k  D_2^+ \alpha_{\mathbf l}(s,s) x^{\mathbf{l}}
\end{align*}
as $h \to 0$ from above since $\alpha_{\bl}\in C_1(\Delta)$. Hence, i) and ii) follow. Part iii) immediately follows from  the representation of $\ccH$ in \eqref{eq:9}.
\end{proof}

The following proposition shows that these two concepts of infinitesimal generators actually coincide on the appropriate space of polynomials.

\begin{proposition}\label{DomainAt}
Consider $m$-polynomial transition operators whose infinitesimal generator is $(\ccH_s)_{0 \le s \le T}$ and let $(\ccG_s)_{0 \le s \le T}$ be the infinitesimal generator of the associated evolution system on $(\ccP_m(S), \|.\|_k)$. Then it holds that
 \begin{enumerate} 
      \item[i)] $\mathcal{\ccD}(\ccG_s) = \ccP_m(S)$ for every $0 \leq s < T$ and,
 	\item[ii)] if $f \in \ccP_m(S)$ then $\ccG_sf=\ccH_s f$ for every $0 \leq s < T$. 
 \end{enumerate}
 \end{proposition}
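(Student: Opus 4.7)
The strategy is a comparison of two limits: $\ccG_s f$ is defined as the $\|\cdot\|_m$-limit of the difference quotient $h^{-1}(P_{s,s+h}f - f)$ in the Banach space $(\ccP_m(S),\|\cdot\|_m)$, while $\ccH_s f$ is defined as the pointwise limit of the same object. Lemma \ref{Lem:Pointwise} already establishes the pointwise picture; the work is to upgrade pointwise convergence to norm convergence, which is essentially free because $\ccP_m(S)$ is finite-dimensional.

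I would proceed as follows. First, by the very definition of the associated evolution system on $(\ccP_m(S),\|\cdot\|_m)$, the generator $\ccG_s$ acts on functions of that Banach space, so $\ccD(\ccG_s)\subseteq \ccP_m(S)$ automatically, which is one half of (i). For the reverse inclusion, fix $f\in\ccP_m(S)$ and use the $m$-polynomial property to write
\begin{equation*}
P_{s,s+h}f(x) \;=\; \sum_{|\mathbf{l}|=0}^{m}\alpha_{\mathbf{l}}(s,s+h)\,x^{\mathbf{l}}, \qquad x\in S,
\end{equation*}
with $\alpha_{\mathbf{l}}\in C^1(\Delta)$. Using $\alpha_{\mathbf{l}}(s,s)$ as the representation of $f$ when $h=0$, the difference quotient admits the representation
\begin{equation*}
\tfrac{1}{h}\bigl(P_{s,s+h}f-f\bigr)(x) \;=\; \sum_{|\mathbf{l}|=0}^{m}\tfrac{1}{h}\bigl(\alpha_{\mathbf{l}}(s,s+h)-\alpha_{\mathbf{l}}(s,s)\bigr)\,x^{\mathbf{l}}.
\end{equation*}

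By Lemma \ref{Lem:Pointwise}, as $h\downarrow 0$ each coefficient converges to $D_2^+\alpha_{\mathbf{l}}(s,s)$ and the pointwise limit equals $\ccH_s f\in\ccP_m(S)$. Next I would apply Lemma \ref{lem:weak} (the same device already used in the proofs of Proposition \ref{prop:polyequi} and Proposition \ref{Prop:strongevo}) to translate this pointwise convergence into convergence in $(\ccP_m(S),\|\cdot\|_m)$. Concretely, the polynomial $h^{-1}(P_{s,s+h}f-f) - \ccH_s f$ converges to zero at every $x\in S$, and Lemma \ref{lem:weak} then yields
\begin{equation*}
\bigl\|\tfrac{1}{h}(P_{s,s+h}f-f) - \ccH_s f\bigr\|_m \;\longrightarrow\; 0 \quad \text{as } h\downarrow 0.
\end{equation*}
This shows simultaneously that $f\in\ccD(\ccG_s)$, completing (i), and that $\ccG_s f=\ccH_s f$, which is (ii).

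The only delicate point is making sure Lemma \ref{lem:weak} applies to the difference quotient for all sufficiently small $h>0$ uniformly, but since we are working on the finite-dimensional space $\ccP_m(S)$ where all norms are equivalent and pointwise evaluation at a basis of points constitutes a family of continuous linear functionals dominating $\|\cdot\|_m$, this step is essentially automatic. So the main (and really only) obstacle is notational bookkeeping of coefficients versus norms, not any substantive analytic difficulty.
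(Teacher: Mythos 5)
Your proof is correct and follows essentially the same route as the paper: both identify the candidate limit $\ccH_s f$ through the coefficient representation $P_{s,s+h}f(x)=\sum_{|\mathbf l|=0}^m \alpha_{\mathbf l}(s,s+h)x^{\mathbf l}$ and Lemma \ref{Lem:Pointwise}, and both reduce the norm convergence of the difference quotient to the differentiability of the $\alpha_{\mathbf l}$. The only (harmless) difference is the final step: the paper bounds $\bigl\| \frac 1 h (P_{s,s+h}f-f)-\ccH_s f\bigr\|_m$ directly by $\max_{\mathbf l}\bigl|\frac{\alpha_{\mathbf l}(s,s+h)-\alpha_{\mathbf l}(s,s)}{h}-D_2^+\alpha_{\mathbf l}(s,s)\bigr|$ using the definition of $\|\cdot\|_m$, whereas you obtain the same conclusion by appealing to Lemma \ref{lem:weak} to upgrade the pointwise convergence; both finishes are valid and rest on the finite-dimensionality of $\ccP_m(S)$.
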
 

\begin{proof}
Clearly,  $\ccD (\ccG_s) \subset \ccP_m(S)$. For the converse, consider $f\in\ccP_m(S)$, such that  $P_{s,t} f(x)$ may be represented as $\sum_{|\mathbf{l}|=0}^k \alpha_{\mathbf l}(s,t) x^{\mathbf l}$. Then  \eqref{eq:9} yields
 	\begin{eqnarray*}
 \left\| \frac 1 h \left( P_{s,s+h}f-f \right) - \ccH_s f\right\|_m 
 &=& \left\| \sum_{|\mathbf l|=0}^m \left( \frac{\alpha_{\mathbf{l}}(s,s+h)-\alpha_{\mathbf{l}}(s,s)}{h} - D_2^+ \alpha_{\mathbf l}(s,s) \right) x^{\mathbf{l}} \right\|_m\\
 &\le& \max_{0\leq \mathbf{l}\leq k}\left| \frac{\alpha_{\mathbf{l}}(s,s+h)-\alpha_{\mathbf{l}}(s,s)}{h} - D_2^+ \alpha_{|\mathbf l|}(s,s) \right|.
	 	\end{eqnarray*}
But in the last equation the right-hand side goes to $0$ for $h \searrow 0$ by the definition of $D_2^+ \alpha_{\mathbf l}(s,s)$. Hence $f \in \ccD(\ccG_s)$ and $\ccG_s f = \ccH_sf$.	
\end{proof}

\subsection{A first characterization of polynomial processes}
The successful concept of generators has one of its main applications in the following result, which allows to generate associated martingales. Recall from Section \ref{sec:pp} that we assume that $\Omega$ is the space of c\`adl\`ag functions $\omega:[0,T]\to S$, $X$ is the coordinate process and the filtration is the canonical filtration $\bbF^0:=(\ccF_t^0)_{0 \le t \le T}$ generated by $X$. Moreover, $\ccF=\ccF_T^0$. This induces a unique probability measure $\P_{0,x}$, $x \in S$ where $\P_{0,x}(X_0=x)=1$. Moreover, for any $s\in[0,T]$, we may restrict ourselves to the space of functions $\omega:[s,T]\to S$ starting at $s$ only and in a similar manner arrive at a unique probability measure $\P_{s,x}$ where $\P_{s,x}(X_s=x)=1$. The canonical filtration in this case will be denoted by $\bbF^0_s:=(\ccF_{s,t}^0)_{s \le t \le T}$.

\begin{lemma}\label{lem:martingale}
Let $X=(X_t)_{0 \leq t \leq T}$ be an $E$-valued Markov process with family of transition operators $(P_{s,t})_{(s,t) \in \Delta}$ and family of infinitesimal generators $(\ccH_s)_{0 \leq s < T}$. Suppose that for some $f \in \cap_{0 \leq u < T}\ccD(\ccH_u)$ and every $s \in [0,T)$ the following holds:  
 
\begin{enumerate}[a)]
    \item The random variables
\begin{equation}
  \label{eq:31}
 M_{s,t}^f:=f(X_t) - f(X_s) - \int_{s}^t\ccH_uf(X_u)du, \quad  s \leq t \leq T
\end{equation}
are well-defined,
\item $\int_s^T P_{s,u} |\ccH_uf| (x) du < \infty$ for all $0 \leq s \leq T$ and all $x \in E$.
\end{enumerate}
Then the following are equivalent: 
\begin{enumerate}[i)]
    \item $(M_{s,t}^f)_{s \leq t \leq T}$ is a $(\mathbb{P}_{s,x}, \mathcal F_{s,t})$-martingale for every $x \in E$ and every $s \in [0,T)$.
    \item For all $(s,t) \in \Delta$ and every $x \in E$ we have 
\begin{align}\label{lem3.5ii}
P_{s,t}f(x) = f(x) + \int_{s}^{t}P_{s,u}\ccH_uf(x)du. \end{align}
\end{enumerate} 
\end{lemma}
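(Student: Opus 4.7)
The plan is to prove the two implications separately, with the integrability assumption (b) and the Markov property doing the main work via Fubini.

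For the direction (i) $\Rightarrow$ (ii), I would take expectations under $\P_{s,x}$ of the identity $M_{s,t}^f = 0$ (in expectation), which follows from the martingale property together with $M_{s,s}^f = 0$. The definition \eqref{eq:31} then gives
\begin{equation*}
 \E_{s,x}[f(X_t)] - f(x) = \E_{s,x}\!\left[ \int_s^t \ccH_u f(X_u)\,du \right].
\end{equation*}
The left-hand side equals $P_{s,t}f(x) - f(x)$ by definition of the transition operator. For the right-hand side, I would invoke Fubini's theorem (justified by assumption (b), since $\E_{s,x}[|\ccH_u f(X_u)|] = P_{s,u}|\ccH_u f|(x)$ and this is integrable in $u$) to pull the expectation inside the integral, yielding $\int_s^t P_{s,u} \ccH_u f(x)\,du$. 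This is exactly \eqref{lem3.5ii}.

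For the converse (ii) $\Rightarrow$ (i), fix $s \in [0,T)$ and $x \in E$, and let $s \le r \le t \le T$. The strategy is to verify the martingale property $\E_{s,x}[M_{s,t}^f \mid \ccF_{s,r}^0] = M_{s,r}^f$. Splitting the integral in \eqref{eq:31} at $r$ and using that $f(X_s) + \int_s^r \ccH_u f(X_u)\,du$ is $\ccF_{s,r}^0$-measurable, it suffices to show
\begin{equation*}
 \E_{s,x}\!\left[ f(X_t) - \int_r^t \ccH_u f(X_u)\,du \,\Big|\, \ccF_{s,r}^0 \right] = f(X_r).
\end{equation*}
The Markov property applied under $\P_{s,x}$ gives $\E_{s,x}[f(X_t) \mid \ccF_{s,r}^0] = P_{r,t}f(X_r)$ and, after a Fubini argument using assumption (b) once more (now applied conditionally),
\begin{equation*}
 \E_{s,x}\!\left[ \int_r^t \ccH_u f(X_u)\,du \,\Big|\, \ccF_{s,r}^0 \right] = \int_r^t P_{r,u} \ccH_u f(X_r)\,du.
\end{equation*}
Combining these with \eqref{lem3.5ii} applied with initial time $r$ and state $X_r$ (which is valid pointwise in $\omega$), the two integrals cancel and leave exactly $f(X_r)$.

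The main obstacle, such as it is, will be the careful application of Fubini in both directions and the justification of the Markov property for the time-inhomogeneous process with respect to the filtration $\bbF^0_s$ introduced in Section \ref{sec:pp}. Assumption (b) is precisely what guarantees $\int_s^t |\ccH_u f(X_u)|\,du < \infty$ almost surely and that the integrated quantities are integrable, so that all conditional expectations are well-defined and swapping with the $du$-integral is licit. Once these measure-theoretic points are handled, both implications are essentially bookkeeping.
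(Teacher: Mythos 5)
Your proposal is correct and follows essentially the same route as the paper: for (i)$\Rightarrow$(ii) both take the $\P_{s,x}$-expectation of $M_{s,t}^f$, use $M_{s,s}^f=0$, and apply Fubini under assumption (b); for (ii)$\Rightarrow$(i) both split $M_{s,t}^f-M_{s,r}^f$ at the intermediate time, apply the Markov property to reduce to $\E_{r,X_r}[\cdot]$, and invoke \eqref{lem3.5ii} with initial data $(r,X_r)$ to conclude the conditional expectation vanishes. No substantive difference.
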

\begin{proof}
$i) \Rightarrow ii)$: the martingale property yields for all $0 \leq s \leq v \le t \leq T$, that
\begin{equation}
  \label{eq:25}
\mathbb E_{s,x}[M_{s,v}^f]= \mathbb E_{s,x}[M_{s,t}^f].  
\end{equation}
 Under condition b) we obtain, using the Fubini theorem,
\begin{eqnarray*}
\mathbb E_{s,x}[M_{s,v}^f] &=& \mathbb E_{s,x} [f(X_v)] - \mathbb E_{s,x}[f(X_s)] - \mathbb E_{s,x} \left[\int_{s}^v\ccH_uf(X_u)du \right] \\
&=& P_{s,v}f(x) - f(x) - \int_{s}^{v}P_{s,u}\ccH_uf(x)du.  
\end{eqnarray*}
Choosing $v=s$ in (\ref{eq:25}) we obtain $ii)$.

\noindent $ii) \Rightarrow i)$: Let $0 \leq s_1 \leq s_2 < t \leq T$. By the Markov property and a further application of the Fubini theorem, 
\begin{align*}
  \mathbb{E}_{s_1,x}[ M_{s_1,t}^f | \ccF_{s_1,s_2}] -M_{s_1,s_2}^f &=   \mathbb{E}_{s_1,x}[ M_{s_1,t}^f-M_{s_1,s_2}^f | \ccF_{s_1,s_2}] \\
&= \mathbb{E}_{s_1,x}[ f(X_{t}) - f(X_{s_2}) - \int_{s_2}^{t}\ccH_uf(X_u)du | \ccF_{s_1,s_2}] \\
&= \mathbb{E}_{s_2,X_{s_2}}[f(X_{t}) - f(X_{s_2}) - \int_{s_2}^{t}\ccH_uf(X_u)du] \\
&=  P_{s_2,t}f(X_{s_2}) - f(X_{s_2}) - \int_{s_2}^{t}P_{s_2,u}\ccH_uf(X_{s_2})du \overset{ii)}{=} 0.\qedhere
\end{align*}
\end{proof}

The martingale property of $M^f$ in the above lemma can be deduced from the polynomial property if $m$ is even (because then $|\cdot|^m$ is  polynomial). This is already the case for time-homogeneous polynomial processes, see Theorem 2.10 and Remark 2.11 in \cite{CuchieroKellerResselTeichmann2012}.

\begin{proposition}\label{prop:polymartingale}
Let $m \ge 2$ be even and let  $X=(X_t)_{0 \leq t \leq T}$ be an $S$-valued $m$-polynomial process and  $(\ccH_s)_{0 \leq s < T}$ be its family of infinitesimal generators. For  $f \in \ccP_m(S)$ and $s \in [0,T]$, 
$$ M_{s,t}^f:=f(X_t) - f(x) - \int_{s}^t\ccH_uf(X_u)du, \quad s \le t \le T$$
is well-defined and a $(\mathbb{P}_{s,x}, \bbF^0_{s})$-martingale for every $x\in S$.
\end{proposition}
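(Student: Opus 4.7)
The plan is to apply Lemma \ref{lem:martingale} with $E = S$, so the work reduces to verifying hypotheses (a) and (b) of that lemma together with the integral identity \eqref{lem3.5ii}. The evenness of $m$ enters only to control $|\ccH_u f(X_u)|$ by a polynomial whose $P_{s,u}$-expectation we can bound.

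First, I would use Lemma \ref{Lem:Pointwise} to write, for $f\in\ccP_m(S)$ with $P_{s,t}f(x)=\sum_{|\bl|\le m}\alpha_{\bl}(s,t)x^{\bl}$, the explicit representation
\begin{equation*}
\ccH_u f(x) = \sum_{|\bl|\le m} D_2^+\alpha_{\bl}(u,u)\, x^{\bl}.
\end{equation*}
Since each $\alpha_{\bl}\in C^1(\Delta)$, the map $u\mapsto D_2^+\alpha_{\bl}(u,u)$ is continuous on $[0,T]$, and hence bounded by some constant $C<\infty$. Because $m$ is even, the elementary bound $|x_j|^k\le 1+x_j^m$ (valid for $0\le k\le m$) combined with weighted AM--GM gives $|x^{\bl}|\le 1+\sum_{j=1}^d x_j^m$ for every $|\bl|\le m$. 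Consequently
\begin{equation*}
|\ccH_u f(x)|\le g(x):= C\Brac{1+\sum_{j=1}^d x_j^m}, \qquad u\in[s,T],\ x\in S,
\end{equation*}
with $g\in\ccP_m(S)$.

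Second, I would verify (a) and (b). Since $X$ is $m$-polynomial, $P_{s,u}g(x)<\infty$ for every $(s,u)\in\Delta$, and $(s,u)\mapsto P_{s,u}g(x)$ is continuous on the compact triangle $\Delta$, hence bounded. This immediately gives (b): $\int_s^T P_{s,u}|\ccH_u f|(x)\,du<\infty$. For (a), Tonelli combined with the dominating bound yields $\E_{s,x}\int_s^t |\ccH_u f(X_u)|\,du \le \int_s^t P_{s,u}g(x)\,du<\infty$, so $\int_s^t \ccH_u f(X_u)\,du$ is almost surely well-defined and $M^f_{s,t}$ is integrable.

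Third, to obtain \eqref{lem3.5ii} I would invoke the Kolmogorov forward equation. By Proposition \ref{DomainAt}, $f\in\ccD(\ccG_u)$ with $\ccG_u f=\ccH_u f$ for every $u\in[0,T)$, so Lemma \ref{KolmogorovEqs} ii) gives
\begin{equation*}
\tfrac{d^+}{du}P_{s,u}f = P_{s,u}\ccH_u f \qquad \text{in } (\ccP_m(S),\|\cdot\|_m).
\end{equation*}
The right-hand side is continuous in $u$ (its coefficients are continuous functions of $u$ by the polynomial property and continuity of $u\mapsto D_2^+\alpha_{\bl}(u,u)$), so the fundamental theorem of calculus in the Banach space $(\ccP_m(S),\|\cdot\|_m)$ applies and we recover $P_{s,t}f = f + \int_s^t P_{s,u}\ccH_u f\,du$ in norm. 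Because pointwise evaluation at $x\in S$ is a bounded linear functional on the finite-dimensional space $(\ccP_m(S),\|\cdot\|_m)$, this identity descends to the pointwise identity \eqref{lem3.5ii}. Lemma \ref{lem:martingale} then delivers the martingale property under each $\P_{s,x}$.

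The main obstacle I anticipate is not conceptual but bookkeeping: carefully upgrading the right-derivative form of the forward equation to the integral identity in norm and justifying the passage from a norm identity in $\ccP_m(S)$ to a pointwise identity used inside the conditional expectation of Lemma \ref{lem:martingale}. Once the dominating polynomial $g$ is in hand (which is exactly where $m$ even is used), everything else is routine.
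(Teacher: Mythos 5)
Your proposal is correct and follows essentially the same route as the paper: represent $\ccH_u f$ via Lemma \ref{Lem:Pointwise}, use the evenness of $m$ to dominate it by a polynomial of degree $m$ so that conditions a) and b) of Lemma \ref{lem:martingale} hold, derive the integral identity \eqref{lem3.5ii} from the Kolmogorov forward equation, and conclude with the equivalence in Lemma \ref{lem:martingale}. The only cosmetic differences are that the paper checks well-definedness of the integral pathwise via the c\`adl\`ag property rather than by Tonelli, and states the forward-equation step more tersely, while you spell out the Banach-space fundamental theorem of calculus and the descent to pointwise evaluation.
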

\begin{proof}
By Lemma \ref{Lem:Pointwise} we have $\ccP_m(S) \ni f  \subset \ccD(\ccH_u)$ for all $0 \leq u < T$ and
\begin{align} \ccH_uf(X_u) = \sum_{|\mathbf l|=0}^m D_2^+ \alpha_{\mathbf l}^f(u,u) (X_u)^{\mathbf l} \label{temp394} \end{align}
with  $\alpha_{\mathbf l}^f \in C^1(\Delta)$, such that $\sup_{u \in [s,t]} |\alpha_{\mathbf l}^f(u,u)|< \infty$. Moreover, $u \mapsto X_u(\omega)$ is c\`adl\`ag for all $\omega$, such that   $u \mapsto \ccH_uf(X_u)$ is indeed integrable on $(s,t)$ and hence $M_{s,t}^f$ is well-defined. Moreover, for all $0 \leq s \leq u < T$ the function $\ccH_u f$ is in $\ccP_m(S)$ and hence $P_{s,u}|\ccH_uf|(x)< \infty$ for all $x \in S$ (as is required for an $m$-polynomial process). Hence condition  a) of Lemma \ref{lem:martingale} is satisfied. Towards condition b) note that Equation \eqref{temp394}  yields
\begin{align*}
   P_{s,u} |\ccH_uf(x)| \le \sum_{|\mathbf l|=0}^m |D_2^+ \alpha_{\mathbf l}^f(u,u)| \cdot P_{s,u} |e_{\bl}| (x) 
\end{align*}
with $e_{\bl}=x^\bl$. Moreover, $|e_\bl(x)| \le 1+g(x)$ with $g(x)=|x|^m$. Since $m$ is even, $g$ is polynomial. Then, Propsition \ref{prop:polyequi} yields that $P_{s,u} g(x)$ depends continuously on $u$ for all $x \in S$, and condition b) of Lemma \ref{lem:martingale} is satisfied as well.

Moreover, for all $0 \leq s \leq t \leq T$ and $x \in S$ from the Kolmogorov forward equation \eqref{Kolmogorovforward} we obtain that
\begin{eqnarray*}
P_{s,t}f(x) - f(x)  - \int_{s}^{t}P_{s,u}\ccH_uf(x)du 
= P_{s,t}f(x) - f(x) - \int_{s}^{t} \frac{d^+}{du} P_{s,u}f(x)du =0.
\end{eqnarray*}
Now the Proposition follows from the equivalence $i) \Leftrightarrow ii)$ in Lemma \ref{lem:martingale}. 
\end{proof}

The following theorem is the main result of this section. It extends  Theorem 2.10 in \cite{CuchieroKellerResselTeichmann2012} to the  time-inhomogeneous case. 
 As above we denote
$$M_{s,t}^f:=f(X_t) - f(X_s) - \int_{s}^t\ccH_uf(X_u)du, \quad  (s,t) \in \Delta.$$
\begin{theorem}\label{thm:mainequivalence}
Let $X=(X_t)_{0 \leq t \leq T}$ be an $S$-valued Markov process with family of transition operators $(P_{s,t})_{(s,t) \in \Delta}$ and family of infinitesimal generators $(\ccH_s)_{0 \leq s < T}$. Moreover let $m \ge 2$ be an even number. 
Then the following are equivalent: 
\begin{enumerate}[i)]
    \item $X$ is $m$-polynomial. 
    \item The following conditions  hold:
  \begin{enumerate}[a)]
    \item $\int_s^T P_{s,u}|f|(x) du< \infty$ for all $x \in S, \ 0 \le s \le T $ and $f \in \ccP_m(S)$.
    \item $(M_{s,t}^f)_{s \leq t < T}$ is well-defined and a $(\mathbb{P}_{s,x}, \bbF^0_s)$-martingale for every $x \in S, f \in \ccP_m(S)$ and all $s \in [0,T)$.
    \item  $\ccH_s (\ccP_k(S)) \subset \ccP_k(S)$ for all $0 \leq k \leq m$ and $0 \leq s < T$.
      \item For all $0 \leq k \leq m$ and $f \in \ccP_k(S)$ there exist $b_{\mathbf l}^f \in C([0,T)), 0 \leq |\mathbf l| \leq k,$ such that
    \begin{equation}
      \label{eq:15}
      \ccH_sf(x)= \sum_{|\mathbf l|=0}^k b_{\mathbf l}^f(s) x^{\mathbf l}
    \end{equation}
for all $x \in S$ and $s \in [0,T)$.  
  \end{enumerate}
\end{enumerate}
\end{theorem}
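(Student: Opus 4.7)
The direction $(i) \Rightarrow (ii)$ is largely bookkeeping of earlier results. Condition (a) follows from the bound $|f(x)|\le C(1+|x|^m)$ together with the fact that $|x|^m=(\sum_i x_i^2)^{m/2}$ is polynomial of degree $m$ when $m$ is even; the $m$-polynomial property and Proposition \ref{prop:polyequi} then make $P_{s,u}|x|^m(x)$ continuous in $u$ and hence integrable on $[s,T]$. Condition (b) is Proposition \ref{prop:polymartingale}, condition (c) is Lemma \ref{Lem:Pointwise}(iii), and condition (d) follows from Lemma \ref{Lem:Pointwise}(ii) combined with the fact that $\alpha_\bl^f\in C^1(\Delta)$ makes $s\mapsto D_2^+\alpha_\bl^f(s,s)$ continuous on $[0,T)$.

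For the direction $(ii)\Rightarrow(i)$, the first step is to invoke Lemma \ref{lem:martingale}. Its integrability hypothesis is verified using (a), (c), (d), and $m$ even (combining $|\ccH_uf(x)|\le C(u)(1+|x|^m)$ from the polynomial expansion in (d) with the $m$-polynomial bound on $|x|^m$), which yields for every $f\in\ccP_m(S)$ and $(s,t)\in\Delta$ the key identity
\begin{equation}\label{eq:planstar}
P_{s,t}f(x) = f(x) + \int_s^t P_{s,u}\ccH_uf(x)\,du.
\end{equation}
The plan is then to induct on $k=0,1,\dots,m$, showing that $f\in\ccP_k(S)$ implies $(s,t,x)\mapsto P_{s,t}f(x)\in\tilde\ccP_k(S)$; by Proposition \ref{prop:polyequi} this is equivalent to $X$ being $m$-polynomial. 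For the inductive step, expand $\ccH_uf(x)=\sum_{|\bl|\le k}b_\bl^f(u)x^\bl$ using (c)+(d) and substitute into \eqref{eq:planstar}. Contributions from $|\bl|<k$ are, by the inductive hypothesis and standard differentiation under the integral, polynomials in $x$ of degree $<k$ with $C^1$ coefficients in $(s,t)$. The top-degree contributions $|\bl|=k$ couple the moment vector $\vec\mu(s,t,x):=(P_{s,t}(x^\bl)(x))_{|\bl|=k}$ through the linear Volterra system
\begin{equation*}
\vec\mu(s,t,x) = \vec x^k + \vec r(s,t,x) + \int_s^t B(u)\vec\mu(s,u,x)\,du,
\end{equation*}
where $B(u)$ has continuous entries $b_{\bl'}^{x^\bl}(u)$ (with $|\bl|=|\bl'|=k$) and $\vec r$ is the already-identified polynomial lower-degree forcing. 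Variation of constants via the fundamental matrix $\Phi(s,t)$ of $B$ (which is jointly $C^1$ in $(s,t)$ by standard ODE theory with continuous coefficients) produces a solution polynomial in $x$ of degree $\le k$ with coefficients $C^1$ in $(s,t)$; linearity then extends the statement from monomials $x^\bl$ to all of $\ccP_k(S)$.

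The main obstacle will be the inductive step, particularly the bookkeeping needed to confirm that variation of constants preserves both the polynomial-in-$x$ structure and the joint $C^1$-regularity in $(s,t)$ — the latter requiring that the $C^1$ dependence of $\vec r$ on $s$ established by the previous induction step survives the integration. A secondary technicality is that (d) only provides continuity of $b_\bl^f$ on $[0,T)$; this may require approximating $t$ from below or verifying integrability at $t=T$ directly from (a).
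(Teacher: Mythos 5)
Your proof is correct in outline and the forward direction $i)\Rightarrow ii)$ matches the paper exactly (Lemma \ref{Lem:Pointwise}, Proposition \ref{prop:polymartingale}, and the even-$m$ bound $|f(x)|\le C(1+|x|^m)$ for condition (a)). The reverse direction also starts the same way — Lemma \ref{lem:martingale} yields the integral identity \eqref{lem3.5ii}, hence continuity of $t\mapsto P_{s,t}f(x)$ and then of $u\mapsto P_{s,u}\ccH_uf(x)$ via (d) — but your final step is organized differently. You induct on the degree $k$ and solve a Volterra system for the block of top-degree moments by variation of constants, treating the lower-degree moments as an already-identified forcing term. The paper avoids the induction entirely: since condition (c) says $\ccH_s$ preserves the whole finite-dimensional space $\ccP_k(S)$, it takes the representing matrix $\mathbf A_s$ of $\ccH_s$ on a basis of $\ccP_k(S)$ (all degrees at once), invokes Lemma \ref{lem:pazy} to get a jointly $C^1$ fundamental solution $\mathbf V(s,t)$ of $\frac{d}{dt}\mathbf V(s,t)=\mathbf V(s,t)\mathbf A_t$, and then identifies $P_{s,t}f$ with $V_{s,t}f$ by checking that $r\mapsto P_{s,r}V_{r,t}f(x)$ has vanishing right-derivative, hence is constant. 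That last trick replaces the Volterra-uniqueness argument you would need to justify that your variation-of-constants formula really equals the moment vector $\vec\mu$; make sure you state that uniqueness explicitly (it follows from Gronwall plus the continuity of $t\mapsto P_{s,t}(x^{\bl})(x)$ you already have). The one-shot matrix formulation also disposes of your ``main obstacle'' — the joint $C^1$-regularity in $(s,t)$ of the lower-degree forcing through the integration — since Lemma \ref{lem:pazy} delivers joint $C^1$-regularity of the full fundamental solution directly. Your concern about $b^f_{\bl}$ being continuous only on $[0,T)$ is legitimate but is shared with (and glossed over by) the paper's own proof, which likewise works on $[0,T)$ and relies on condition (a) at the endpoint.
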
 

\begin{proof}
To begin with, we note that the implication $i) \Rightarrow ii)$ readily follows from Lemma \ref{Lem:Pointwise}, Proposition \ref{prop:polymartingale} and  the definition of an $m$-polynomial process. 

For the reverse direction 
$ii) \Rightarrow i)$,  let $0 \leq k \leq m$ and $f \in \ccP_k(S)$. We have to show that $P_{s,t}f \in \ccP_k(S)$ for all $(s,t) \in \Delta$ and that $\Delta \ni (s,t) \mapsto P_{s,t}f \in \mathcal (\ccP_k(S),\|.\|_k)$ is continuously differentiable. 

First, note that by condition c), $f \in \ccD (\ccH_s)$ for all $0 \leq s < T$ and $\ccH_s f \in P_k(S)$. 
Then, conditions a) and b) guarantee the applicability of Lemma \ref{lem:martingale} and we obtain Equation \eqref{lem3.5ii}.
Note that this already implies that the map $[s,T) \ni t \mapsto P_{s,t}f(x)$ is continuous. 

Second, from assumption d) we obtain, by applying $P_{s,u}$ to Equation \eqref{eq:15}, that
$$ P_{s,u} \ccH_uf(x) = \sum_{|\mathbf l|=0}^k b_{\mathbf l}^f(u) [P_{s,u}e_{\mathbf l}](x),$$
where again $e_{\mathbf l}(x)= x^{\mathbf l}$. Since, $u \mapsto b_{\mathbf l}^f(u)$ is continuous and, as already shown, $u \mapsto P_{s,u}e_{\mathbf l}(x)$ is continuous, we obtain that $u \mapsto P_{s,u} \ccH_uf(x)$ is continuous. Since $X$ is polynomial, the transition operators $(P_{s,t})$ are a strongly continuous evolution system by Proposition \ref{Prop:strongevo} and Lemma \ref{KolmogorovEqs}  yields that the Kolmogorov forward equation 
  \begin{equation}
    \label{eq:30}
    \frac{d^+}{dt} P_{s,t}f(x) = P_{s,t} \ccH_t f(x).
  \end{equation}   
 holds. The third and final step will be to use this property together with Lemma \ref{lem:pazy} to obtain that  $\Delta \ni (s,t) \mapsto P_{s,t}f \in \mathcal (\ccP_k(S),\|.\|_k)$ is continuously differentiable. 

In order to apply Lemma \ref{lem:pazy} we represent the Kolmogorov equation with respect to a basis in the space of polynomials: let $v_1, \ldots, v_N$ denote a basis of $\ccP_k(S)$ and consider $s \in [0,T)$. Recall that $\ccH_s : \ccP_k(S) \to \ccP_k(S)$ such that we may choose a representing matrix of $\ccH_s$, which we denote by  $\mathbf A_s \in \R^{N \times N}$ (see Section \ref{sec:matrices} for details, in particular we have $\mathbf A_s = U \ccH_s U^{-1}$ with  the linear map $U : \ccP_k(S) \to \R^N$  being defined by $Uv_j=e_j, j=1,\ldots,N$, with the standard basis of $\R^N$, $\{e_1,\dots,e_N\}$).

As $(\mathbf A_s)_{0 \le s <T}$ constitutes a continuous family of matrices, Lemma \ref{lem:pazy} yields a unique solution to the initial value problem 
$$ \left\{
  \begin{array}{cl}
\frac{d}{dt} \mathbf V(s,t)= \mathbf V(s,t) \mathbf A_t, & s < t <T \\
 \mathbf V(s,s)= \mathbf I. 
  \end{array}\right.$$
Define $V_{s,t}:= U^{-1}\mathbf V(s,t) U$. Then  $V_{s,t} : \ccP_k(S) \to \ccP_k(S)$, for every $f \in \ccP_k(S)$ the map
$$ \Delta \ni (s,t) \mapsto V_{s,t}f \in (\ccP_k(S),\|.\|_k)$$
is continuously differentiable, $V_{s,s}=I$ (the identity operator on $\ccP_k(S)$) and
\begin{equation}
  \label{eq:33}
  \frac d {dt} V_{s,t} = - \ccH_s V_{s,t}.
\end{equation}

Finally, we show that $P_{s,t}f=V_{s,t}f$ for all $f \in \ccP_k(S)$ which finishes the proof: fix $x \in S$ and $0 \le s < t \le T$ and define $$W(r):=P_{s,r}V_{r,t}f(x), \quad s \leq r \leq t.$$ 
Then the function $W$ is right-differentiable with
\begin{eqnarray*}
\frac{d^+}{dr} W(r) &=& \left( \frac{d^+}{dr} P_{s,r} \right) V_{r,t} f(x) + P_{s,r} \left( \frac{d^+}{dr} V_{r,t} \right) f(x) \\
&=& P_{s,r} \ccH_r V_{r,t}f(x) - P_{s,r} \ccH_r V_{r,t}f(x) = 0.  
\end{eqnarray*}
This shows that $W$ is constant. In particular $W(s)=W(t)$ which is equivalent to $V_{s,t}f(x)=P_{s,t}f(x)$ and the proof is finished.
\end{proof}

\section{The Feller property}\label{sec:Feller}
In this section we establish a sufficient criterion for uniqueness of the associated transition operators - the \emph{Feller} property. One central result will be that polynomial processes  with compact state space are  Feller processes. We begin with a short introduction to the topic following \cite{RevuzYor,Boettcher2014}. To this end we denote for a set $S \subset \R^d$ by
\begin{equation*}
  C_0(S):=\{ f \in C(S) :  \forall \varepsilon > 0 \; \exists K \subset S \text{ compact such that } |f(x)| < \varepsilon \text{ for } x \notin K\}
\end{equation*}
the space of continuous functions on $S$  vanishing at infinity. Recall that equipped with the norm
$ \|f\|_\infty:= \sup_{x \in S} |f(x)| $, $C_0(S)$ is a Banach space. 

\begin{definition} \label{defFeller}
The transition operators $(P_{s,t})_{(s,t) \in \Delta}$ are called \emph{Feller},  if  for all $f \in C_0(S)$:
\begin{enumerate}[i)]
\item $P_{s,t}f \in C_0(S)$,
\item $\left\|P_{s,t}f\right\|_{\infty}\leq \left\|f\right\|_{\infty}$ for all $(s,t) \in \Delta$,
\item $(P_{s,t})_{(s,t) \in \Delta}$ is strongly continuous on the Banach space $(C_0(S),\|.\|_\infty)$.
\end{enumerate}
In this case the family $(P_{s,t})_{(s,t) \in \Delta}$ is also called a \emph{Feller evolution system}.
\end{definition}

Note that for any Markov process the associated transition operators $P_{s,t}$ are defined on $C_0(S)$ since functions in $C_0(S)$ are bounded. Moreover, also the property $ii)$ is satisfied for all Markov processes. The relevance for Feller processes stems from the fact that to a Feller process one can associate a unique evolution system, see Proposition III.2.2. in \cite{RevuzYor}.

\begin{proposition}\label{prop:Feller}
Let $S \subset \R^d$ be compact. Then an $S$-valued polynomial process $X=(X_t)_{0\leq t \leq T}$ is a Feller process. 
\end{proposition}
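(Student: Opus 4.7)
Since $S$ is compact, $C_0(S)$ coincides with $C(S)$, and the contraction property (ii) of Definition \ref{defFeller} is immediate for any Markov process because $p_{s,t}(x,\cdot)$ is a probability measure. The task therefore reduces to verifying (i) and (iii), and the key tool will be the Stone--Weierstrass theorem: the algebra $\ccP(S):=\bigcup_{m\ge 0}\ccP_m(S)$ of polynomial functions on $S$ is a unital subalgebra of $C(S)$ that separates points (the coordinate functions already do), so $\ccP(S)$ is dense in $(C(S),\|.\|_\infty)$.

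For condition (i), I would take $f\in C(S)$ and approximate it uniformly by polynomials $p_n\in\ccP(S)$. The polynomial property of $X$ gives $P_{s,t}p_n\in\ccP(S)\subset C(S)$, and the sup-norm contraction property yields $\|P_{s,t}p_n-P_{s,t}f\|_\infty\le\|p_n-f\|_\infty\to 0$. Since uniform limits of continuous functions are continuous, $P_{s,t}f\in C(S)=C_0(S)$.

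For condition (iii), the strategy is first to upgrade the strong continuity from Proposition \ref{Prop:strongevo} from the norm $\|.\|_k$ to the sup norm on each $\ccP_k(S)$, and then to extend from polynomials to all of $C(S)$. The first step is purely finite-dimensional: $\ccP_k(S)$ has finite dimension $N$, and $\|.\|_\infty$ is a genuine norm on it (this uses the standing assumption that $S$ is rich enough to identify polynomials), hence $\|.\|_k$ and $\|.\|_\infty$ are equivalent and strong continuity transfers. For the second step I would apply a standard $\varepsilon/3$ argument: given $f\in C(S)$ and $\varepsilon>0$, pick $p\in\ccP(S)$ with $\|f-p\|_\infty<\varepsilon/3$, then
\begin{equation*}
\|P_{v,w}f-P_{s,t}f\|_\infty\le\|P_{v,w}(f-p)\|_\infty+\|P_{v,w}p-P_{s,t}p\|_\infty+\|P_{s,t}(p-f)\|_\infty,
\end{equation*}
and the two outer terms are each bounded by $\varepsilon/3$ by contractivity while the middle term tends to zero as $(v,w)\to(s,t)$ by the first step.

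The only mildly delicate point is the norm-equivalence argument on $\ccP_k(S)$; everything else is a routine combination of Stone--Weierstrass, the finite-dimensional strong continuity already established in Proposition \ref{Prop:strongevo}, and the contraction property intrinsic to transition operators.
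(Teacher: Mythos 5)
Your proposal is correct and follows essentially the same route as the paper: Stone--Weierstrass density of polynomials, the contraction property to transfer uniform approximation through $P_{s,t}$, and equivalence of $\|\cdot\|_k$ with $\|\cdot\|_\infty$ on the finite-dimensional space $\ccP_k(S)$ combined with the strong continuity from Proposition \ref{Prop:strongevo} to handle the middle term in the $\varepsilon/3$ estimate. The only cosmetic difference is that for property (i) you phrase the argument as ``$P_{s,t}f$ is a uniform limit of the continuous functions $P_{s,t}p_n$'' while the paper runs the pointwise $\varepsilon/3$ estimate directly; these are the same argument.
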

\begin{proof} 
Denote by $(P_{s,t})_{(s,t) \in \Delta}$ transition operators of $X$. Since  $S \subset \R^d$ is compact we have $C_0(S)=C(S).$ We verify the properties of Definition \ref{defFeller}. Clearly ii) holds. Next, we show i): let $f \in C(S)$, $x \in S$ and $\epsilon >0$. By the Stone-Weierstra{\ss} theorem, we know that there exists $k\in\mathbb{N}$ and $p\in\ccP_k(S)$ such that $\left\|f - p\right\|_{\infty}<\epsilon/3$. Then, for $y \in S$ we have that
\begin{eqnarray*}
  |P_{s,t}f(x)-P_{s,t}f(y)| &\leq& |P_{s,t}(f-p)(x)| + |P_{s,t}p(x)-P_{s,t}p(y)| + |P_{s,t}(p-f)(y)| \\
&\leq& \|P_{s,t}(f-p)\|_\infty + |P_{s,t}p(x)-P_{s,t}p(y)| + \|P_{s,t}(p-f)\|_\infty \\
&\leq& \|f-p\|_\infty + |P_{s,t}p(x)-P_{s,t}p(y)| + \|f-p\|_\infty \\
&\leq& 2 \epsilon/3 + |P_{s,t}p(x)-P_{s,t}p(y)|.
\end{eqnarray*}
Since $x\mapsto P_{s,t}p(x)$ is polynomial and therefore continuous, there is $\delta > 0$ such that with $|x-y| < \delta$ it holds that $|P_{s,t}p(x)-P_{s,t}p(y)|< \epsilon/3$ and i) follows.

In view of  iii) we have to verify that for every $f\in C(S)$ and $(v,w)\in \Delta$ the following holds: for all $\epsilon>0$ there exists $\delta>0 $ such that  for $(s,t) \in \Delta$ with $\left|(s,t)-(v,w)\right|<\delta $ we have $ \left\|P_{s,t}f-P_{v,w}f\right\|_{\infty}<\epsilon$. To this end, consider $f\in C(S)$ and $\epsilon >0$. As above there exists $k\in\mathbb{N}$ and $p\in\ccP_k(S)$ such that $\left\|f - p\right\|_{\infty}<\epsilon/3$.
Then, 
\begin{eqnarray*}
\left\|P_{s,t}f-P_{v,w}f\right\|_{\infty}&\leq& 
\left\|P_{s,t}(f-p)\right\|_{\infty} + \left\|P_{s,t}p-P_{v,w}p\right\|_{\infty} + \left\|P_{v,w}(p-f)\right\|_{\infty}\\
&\leq& \left\|f-p\right\|_{\infty} + \left\|P_{s,t}p-P_{v,w}p\right\|_{\infty} + \left\|p-f\right\|_{\infty}\\
&\leq& 2\epsilon/3 + \left\|P_{s,t}p-P_{v,w}p\right\|_{\infty}.
\end{eqnarray*}

All that remains is to choose $(s,t)$ suitably such that the second term is smaller than $\epsilon/3$. 
On the finite dimensional Banach space $\ccP_k(S)$ all norms are equivalent. Hence, there exists a constant $c_0>0$ such that
$ \left\|P_{s,t}p-P_{v,w}p\right\|_{\infty}\leq c_0 \left\|P_{s,t}p-P_{v,w}p\right\|_{k}$
with  $\|f\|_m=\max_{0\leq |\bk|\leq m}|\alpha_{\bk}|$ . 
In Proposition \ref{Prop:strongevo} we showed that $(P_{s,t})$ is strongly continuous on $\ccP_k(S)$. As a consequence, there exists $\delta>0$ such that for $\left|(s,t)-(v,w)\right|<\delta$ we have  $\left\|P_{s,t}p-P_{v,w}p\right\|_{k}<\epsilon/(3c_0)$. So, for this choice of $(s,t)$ we obtain 
$$\left\|P_{s,t}p-P_{v,w}p\right\|_{\infty}\leq c_0 \left\|P_{s,t}p-P_{v,w}p\right\|_{k}<\epsilon/3$$
and iii) follows.
\end{proof}

\begin{remark}\label{rem:Feller}
\begin{enumerate}[(i)]
\item Even if we know that affine processes on the canonical state space have the Feller property (see \cite{Filipovic05}), this may fail to hold on a more general state space as considered here. 
\item The proof of Proposition \ref{prop:Feller} actually shows that for  polynomial transition operators properties ii) and iii) of Definition \ref{defFeller} hold (on a possibly not compact state space). Moreover, instead of property i) only the weaker asseration $P_{s,t} C(S) \subset C(S)$ holds, which, however, will be sufficient to follow the usual augmenation of canonical filtrations in the beginning of the following section.
\end{enumerate}
\end{remark}

\section{Semimartingale characteristics}
\label{sec:semimartingale}

We begin our study of polynomial processes which are semimartingales. 
For general facts on semimartingales and stochastic analysis used in this article we refer to \cite{JacodShiryaev}.
Recall from Section \ref{sec:pp} that we assumed $\Omega$ to be the space of c\`adl\`ag functions $\omega:[0,T]\to S$, $X$ is the coordinate process and the filtration is the canonical filtration $\bbF^0:=(\ccF_t^0)_{0 \le t \le T}$ generated by $X$. Moreover, $\ccF^0=\ccF_T^0$. This induces a unique probability measure $\P_{0,\nu}$, $x \in S$ where $\nu$ is the initial distribution of $X$, i.e.\ $\P_{0,x}(X_0\in A)=\nu(A)$.

However, this filtration is neither complete nor right-continuous, which is  essential for the application of semi-martingale calculus. We follow \cite{RevuzYor}, Section III.2 for the usual augmentation for Markov-processes: consider an initial distribution $\nu$. Then, denote by $\ccF^\nu$ the completion of $\ccF$ with respect to $\P_{0,\nu}$ and by $\bbF^\nu:=(\ccF_t^\nu)_{0 \le t \le T}$ the filtration obtained by adding all the $\P_{0,\nu}$-negligible sets in $\ccF^\nu$ to each $\ccF_t^0$, $t \in [0,T]$. Finally, we set 
\begin{align*}
\ccF_t = \bigcap_\nu \ccF_t^\nu, \qquad \ccF := \bigcap_\nu \ccF^\nu
\end{align*}
and denote $\bbF:=(\ccF_t)_{0 \le t \le T}$. By Proposition III.2.10 in \cite{RevuzYor}, $\bbF$ satisfies the usual conditions if $X$ is Feller which fails in general for the polynomial processes considered here.
However, as noted in Remark \ref{rem:Feller} ii), a weaker property holds, namely ii) and iii) of Definition \ref{defFeller} are satisfied and instead of i) it holds that $P_{s,t} C(S) \subset C(S)$. In particular, this property holds for bounded continuous functions. An inspection of the proof of Proposition III.2.10 reveals that this is actually sufficient for right-continuity of $\bbF$. In an analogous way we define the filtration $\bbF_s=(\ccF_{s,t})_{s \le t \le T}$ starting at $s \in [0,T]$.

Finally, analogous to Section \ref{sec:pp} we define the respective quantities starting from time point $s\in[0,T]$, i.e.\ $\P_{s,\nu}$, $\bbF^\nu_s$ and $\bbF_s$.
Now we are ready to recapitulate standard notions from semi-martingale calculus. For the c\`adl\`ag process $X$ we define $X_-$ and $\Delta X$ by 
\[\begin{cases}X_{0-} &= X_0, \quad  X_{t-} = \lim_{s \uparrow t} X_s \quad \text{ for }t>0,\\
\Delta X_t &= X_t - X_{t-}  . \end{cases}
\]

The process $X$ is a  $(\P,\bbF)$-\emph{semimartingale} if it has a decomposition $X=X_0+N+M$ where $X_0$ is $\cF_0$-measurable, $N$ is c\`adl\`ag, $\bbF$-adapted, has paths of finite variation over each finite interval with $N_0=0$ and $M$ is a $(\P,\bbF)$-local martingale starting in $0$. We emphasized the obvious dependence on $\P$ and $\bbF$ because in the following we will refer to different filtrations and measures. For the following definitions we keep this dependence in mind, but to facilitate notation, we will typically drop it in our notation.


We can associate an integer-valued random measure $\mu^X$ with the jumps of $X$ by 
	\begin{align} \label{def:muX}
		\mu^X(dt,dx) = \sum_{s \ge 0} \ind{\Delta X_s \neq 0} \delta_{(s,\Delta X_s)}(dt,dx); 
	\end{align}
here $\delta_a$ is the Dirac measure at point $a$. We denote the compensator, or the dual predictable projection, of the random measure $\mu^X$ by $\nu$. This is the unique $\bbF$-predictable random measure which renders stochastic integrals with respect to $\mu^X-\nu$  local martingales. 

The semimartingale $X$ is called \emph{special} if $N$ is predictable. In this case, the decomposition  $X=X_0+N+M$ is unique, and we call it the \emph{canonical decomposition}. The local martingale part $M$ can be decomposed in a continuous local martingale part, which we denote by $X^c$, and a purely discontinuous local martingale part, $X-X^c$.

A for us essential concept is the \emph{characteristics} of a semimartingale. The characteristics of a general semimartingale typically involve a trunction function $h$. Here, however, we will only deal with special semi-martingales, such that this is not necessary and one can choose $h(x)=x$.  
The \emph{characteristics} of the special semimartingale $X$ with unique decomposition $X=X_0+B+M $ is the triplet $(B,C,\nu)$ where $B=(B^i)$ is a process of finite variation,  $C=(C^{ij})$ with $C^{ij}=\scal{X^{i,c}}{X^{j,c}}$ and $\nu=\nu^X$ is the compensator of $\mu^X$ defined in Equation \eqref{def:muX}.

\begin{lemma}
Let $X=(X_t)_{0 \leq t \leq T}$ be an $S$-valued $m$-polynomial process with $m \ge 2$ and $S\subset \R^d$ being closed. Then for all $ 0 \leq s \leq T$ and all $x \in S$ the process $(X_t)_{s \leq t \leq T}$ is a special semi-martingale with respect to the stochastic basis $(\Omega,\ccF,(\ccF_{s,t})_{s \leq t \leq T},\mathbb{P}_{s,x})$. 
\end{lemma}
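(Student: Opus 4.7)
The plan is to read off the canonical semimartingale decomposition directly from the martingale identity of Proposition \ref{prop:polymartingale} applied to the coordinate functions. First, since $X$ is $m$-polynomial with $m \ge 2$, the definition immediately yields that it is also $2$-polynomial (restrict the definition to $k \in \{0,1,2\}$), and $2$ is even, so Proposition \ref{prop:polymartingale} is applicable with $m$ replaced by $2$. For each $i = 1, \dots, d$, the coordinate function $e_i(x) := x_i$ belongs to $\ccP_1(S) \subset \ccP_2(S)$, hence
$$ M_{s,t}^{e_i} := X_t^i - x_i - \int_s^t \ccH_u e_i(X_u)\,du, \quad s \le t \le T, $$
is well-defined and is a $(\P_{s,x},\bbF_s^0)$-martingale for every $x \in S$.

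Next I would verify that $B_t^i := \int_s^t \ccH_u e_i(X_u)\,du$ is a predictable process of finite variation. By Lemma \ref{Lem:Pointwise}(iii) we have $\ccH_u e_i \in \ccP_1(S)$ with coefficients $D_2^+ \alpha_{\bl}^{e_i}(u,u)$ that are continuous in $u$ (being diagonal values of the continuous partial derivative of a $C^1$-function on $\Delta$). Because $X$ has c\`adl\`ag paths in the closed set $S \subset \R^d$, for each $\omega$ the trajectory $u \mapsto X_u(\omega)$ is bounded on every compact subinterval of $[s,T]$; hence $u \mapsto \ccH_u e_i(X_u(\omega))$ is bounded and measurable, the integral defining $B^i$ is absolutely continuous in $t$, and $B^i$ is therefore continuous and of finite variation. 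Continuity together with adaptedness also yields predictability.

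Putting these pieces together gives the decomposition $X_t^i = x_i + B_t^i + M_{s,t}^{e_i}$, in which the finite-variation part $B^i$ is predictable and starts at zero and the local martingale part $M^{e_i}$ starts at zero; hence each coordinate is a special semimartingale w.r.t.\ $(\P_{s,x},\bbF_s^0)$, and so is $X$. The passage to the basis $(\Omega,\ccF,(\ccF_{s,t})_{s \le t \le T},\P_{s,x})$ used in the statement is standard: completion by $\P_{s,x}$-null sets and taking the right-continuous hull preserves the martingale property of a c\`adl\`ag $\bbF_s^0$-martingale, and adaptedness is inherited trivially. The only subtle point is right-continuity of $\bbF_s$, which does not follow from the usual Feller argument (our processes need not be Feller on non-compact $S$); however, this has already been dealt with in the preamble of Section \ref{sec:semimartingale} via the weaker property recorded in Remark \ref{rem:Feller}(ii), and this is precisely where I expect the proof to require the most care.
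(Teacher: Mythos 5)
Your proposal is correct and follows essentially the same route as the paper: apply Proposition \ref{prop:polymartingale} (after reducing to the $2$-polynomial case) to the coordinate projections, identify the absolutely continuous drift $\int_s^t \ccH_u \pi_i(X_u)\,du$ as the predictable finite-variation part, and conclude coordinatewise. The only cosmetic difference is that the paper handles the passage from $\bbF_s^0$ to the augmented filtration $\bbF_s$ by citing Proposition 2.2 of Neufeld--Nutz rather than arguing it by hand as you do, but both arguments are sound.
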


\begin{proof}
We denote the infinitesimal generators of $X$ by $(\ccH_s)_{0 \leq s < T}$.
By assumption,  $X$ is $2$-polynomial and we therefore may apply  Proposition \ref{prop:polymartingale} to the projection to the $i$-th coordinate,  $\pi_i(x_1,\ldots,x_d):= x_i,$ $1 \leq i \leq d$. Hence, 
\begin{equation}
  \label{eq:37}
M_{s,t}^{i}:=M_{s,t}^{\pi_i}=X_t^{i}- X_s^{i}- \int_s^t \ccH_u \pi_i(X_u)du, \qquad s \le t \le T
\end{equation} 
is a $(\mathbb{P}_{s,x}, \bbF^0_{s})$-martingale. By Proposition 2.2 in \cite{NeufeldNutz:Measurability} it is moreover a $(\mathbb{P}_{s,x}, \bbF_s)$-martingale.
Solving for $X_t^{i}$ we see that $X^i$ is the sum of an absolutely continuous (and therefore predictable) process and a martingale, hence a special semimartingale and so is $X$.
\end{proof}

\begin{theorem} \label{PolProcChar} 
Fix $ 0 \leq s \leq T$ and  $x \in S$ and 
denote the semimartingale characteristics of the $m$-polynomial process $(X_{s,t})_{s \le t \le T}$ with respect to $(\ccF_{s,t})_{s \leq t \leq T}$ and $\mathbb{P}_{s,x}$ by $(B_t,C_t,\nu_t)_{s \le t \le T}$. If $m \ge 2$,  then the following holds: 
\begin{enumerate}[i)]
\item There exist measurable functions $b^{i},a^{ij} : [0,T) \times S \to \R, 1 \leq i,j \leq d$, not depending on $s$ and $x$, such that for all $0 \leq t < T$ and $\xi \in S$
$$ b^i(t,.) \in \ccP_1(S), \  a^{ij}(t,.)\in\ccP_2(S), $$ 
 with $ b^{i}(.,\xi), a^{ij}(.,\xi) \in C[0,T)$  and 
\begin{eqnarray} 
\label{c1} B_{t}^{i}&=&\int_s^t b^i(u,X_u)du,\\
\label{c2} C_{t}^{ij}&+&\int_s^t\int_{\mathbb{R}^d}\xi_i\xi_j\nu(du,d\xi)=\int_s^t a^{ij}(u,X_u)du, 
\end{eqnarray}
for all $0 \leq s \leq t \leq T$ and $1 \leq i,j \leq d$.

\item There exists a measurable function $c=(c^{ij})_{i,j=1}^d : [0,T) \times S \to \R^{d \times d}$, taking values in the set of positive semi-definite matrices, such that 
\begin{equation}
\label{c3} C_{s,t}^{ij}=\int_s^t c^{ij}(u, X_u)du, \qquad 0 \leq s \leq t \leq T.
\end{equation} 

\item For each $0 \leq t \leq T$ there exists a positive transition kernel  $K_t$ from $(S,\mathcal{S})$ into $(\mathbb{R}^d,\ccB(\mathbb{R}^d))$, which  integrates $\left(|x|^2\wedge 1\right)$ and satisfies $K_u(x,\{0\})=0$, such that 
\begin{equation}
\label{c3_2} \nu(\omega; dt, d\xi)=K_t(X_t(\omega),d\xi)dt. 
\end{equation}  
Moreover, for all $3 \leq | \mathbf{k}| \leq m$ there exist $\alpha_{\mathbf l} \in C[0,T), 0 \leq |\mathbf l| \leq |\mathbf k|$ such that  
\begin{equation} 
\int_{\mathbb{R}^d}\xi^{\mathbf{k}}K_t(x,d\xi)=\sum_{\vert \mathbf{l}\vert =0}^{\vert \mathbf{k}\vert} \alpha_{\mathbf{l}}(t)x^{\mathbf{l}}, \quad x \in S, t \in [0,T). \label{c4} 
\end{equation}  
\end{enumerate}
\end{theorem}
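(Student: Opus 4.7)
My plan is to combine It\^o's formula for the special semimartingale $X$ with the martingale identity from Proposition~\ref{prop:polymartingale}. For every $f \in \ccP_m(S)$, It\^o's formula and the canonical decomposition of $f(X)$ (Theorem~II.2.42 in \cite{JacodShiryaev}) give
\begin{align*}
f(X_t) - f(X_s) &= \int_s^t \nabla f(X_{u-})^\top dB_u + \tfrac{1}{2}\int_s^t \tr\bigl(\nabla^2 f(X_{u-})\, dC_u\bigr)\\
&\quad + \int_s^t\!\!\int_{\R^d} \bigl(f(X_{u-}+\xi) - f(X_{u-}) - \nabla f(X_{u-})^\top \xi\bigr)\nu(du,d\xi) + N_t^f,
\end{align*}
for a local martingale $N^f$. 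On the other hand, Proposition~\ref{prop:polymartingale} delivers $f(X_t) - f(X_s) = \int_s^t \ccH_u f(X_u)\,du + M_{s,t}^f$ with $M^f$ a martingale. Uniqueness of the canonical decomposition of a special semimartingale thus equates the absolutely continuous process $\int_s^t \ccH_u f(X_u)\,du$ with the sum of the three predictable integrals above.

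Taking $f = \pi_i$ (the $i$-th coordinate projection) collapses the Hessian and jump terms and yields $B^i_t - B^i_s = \int_s^t \ccH_u\pi_i(X_u)\,du$. By Lemma~\ref{Lem:Pointwise}(iii) we have $\ccH_u \pi_i \in \ccP_1(S)$, and by Theorem~\ref{thm:mainequivalence}~ii)~d) its coefficients depend continuously on $u$, so setting $b^i(u,x) := \ccH_u \pi_i(x)$ proves \eqref{c1}. For $f = \pi_i\pi_j$ one has $\nabla f(x) = x_j e_i + x_i e_j$ and $f(x+\xi) - f(x) - \nabla f(x)^\top \xi = \xi_i\xi_j$, so subtracting the drift contribution gives \eqref{c2} with $a^{ij}(u,x) := \ccH_u(\pi_i\pi_j)(x) - x^j b^i(u,x) - x^i b^j(u,x) \in \ccP_2(S)$. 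Neither $b^i$ nor $a^{ij}$ depends on $(s,x)$, since $\ccH_u$ does not.

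For (ii), the continuous nondecreasing process $C^{ii}$ and the predictable nondecreasing process $u \mapsto \int_s^u\!\int \xi_i^2\,\nu(dv,d\xi)$ sum to the absolutely continuous $\int_s^\cdot a^{ii}(u,X_u)\,du$, forcing each component to be absolutely continuous; polarization applied to $X^i \pm X^j$ extends this to off-diagonal $C^{ij}$, and the Markov property of $X$ lets us choose the Radon-Nikodym density as a measurable function $c^{ij}(u,X_u)$, with positive semi-definiteness inherited from $dC/dt$. This establishes \eqref{c3}. The same absolute continuity together with the disintegration theorem for predictable random measures (Theorem~II.1.8 in \cite{JacodShiryaev}) represents $\nu(du,d\xi) = F_u(\omega,d\xi)\,du$, and the Markov structure of $X$ collapses $F_u(\omega,\cdot)$ to $K_u(X_u(\omega),\cdot)$, yielding \eqref{c3_2}. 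Finally, for \eqref{c4} I apply the compensator identity to $f(x) = x^{\mathbf k}$ with $3 \le |\mathbf k| \le m$. Binomial expansion gives
\begin{align*}
f(x+\xi) - f(x) - \nabla f(x)^\top \xi = \sum_{2 \le |\mathbf l| \le |\mathbf k|}\binom{\mathbf k}{\mathbf l} x^{\mathbf k - \mathbf l}\xi^{\mathbf l},
\end{align*}
so the identity becomes a polynomial equation in $x$ whose left-hand side lies in $\ccP_{|\mathbf k|}(S)$. Induction on $|\mathbf k|$, with base case $|\mathbf k| = 2$ providing $\int \xi_i\xi_j K_u(x,d\xi) = a^{ij}(u,x) - c^{ij}(u,x) \in \ccP_2(S)$, solves for $\int\xi^{\mathbf k}K_u(x,d\xi)$ as a polynomial in $x$ of degree at most $|\mathbf k|$; continuity of the coefficients $\alpha_{\mathbf l}$ in $u$ again follows from Theorem~\ref{thm:mainequivalence}~ii)~d).

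The main obstacle is the $\omega \mapsto X_u(\omega)$ reduction of the disintegrating measure of $\nu$ in \eqref{c3_2}: the It\^o identity alone provides only absolute continuity in time, and isolating the Markov-kernel structure of the disintegrating measure requires a careful application of the Markov property of $X$, in the spirit of the classical theory of Markov semimartingales.
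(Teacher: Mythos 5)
Your proposal is correct and follows essentially the same route as the paper: Itô's formula combined with the martingale identity of Proposition~\ref{prop:polymartingale} and uniqueness of the special semimartingale decomposition, the choices $b^i(u,x)=\ccH_u\pi_i(x)$ and $a^{ij}(u,x)=\ccH_u(\pi_i\pi_j)(x)-x_jb^i(u,x)-x_ib^j(u,x)$, absolute continuity of $C$ and $\nu$ forced by the nonnegativity of the summands, reduction of the densities to functions of $(t,X_t)$ via the Markov structure (the paper invokes Proposition~II.2.9 of \cite{JacodShiryaev} for exactly the step you flag as the main obstacle), and induction on $|\mathbf k|$ for the jump-kernel moments. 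The only cosmetic differences are your use of polarization for the off-diagonal $C^{ij}$ where the paper reads absolute continuity off the identity \eqref{c2} after constructing the kernel, and your induction base at $|\mathbf k|=2$ rather than $3$; both are harmless.
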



\begin{proof} 
Our proof follows the proof of the first implication in Proposition 2.12 in \cite{CuchieroKellerResselTeichmann2012} additionally taking care on the time-inhomogenity.

i) Equation \eqref{eq:37} implies that  
$   B_{t}^{i} = \int_s^t \ccH_u\pi_i(X_u)du$ and we set  
\begin{equation}
  \label{eq:bi}
b^i(u,\xi):= \ccH_u \pi_i(\xi), \quad 0 \leq u \le T,\  \xi \in S.  
\end{equation}
Since $\pi_i \in \ccP_1(S)$ and, by Theorem \ref{thm:mainequivalence} c), $\ccH_u \pi_i \in  \ccP_1$, it follows that $b^i(u,.) \in \ccP_1(S)$. Moreover, by Theorem \ref{thm:mainequivalence} d)  the map $u \mapsto \ccH_u \pi_i(\xi)=b^{i}(u,\xi)$ is continuous. 

Aplying It\^o's formula with $f_\bk(x)=x^\bk$ implies
\begin{eqnarray}
	\nonumber f_{\mathbf k}(X_t)&=& f_{\mathbf k}(x) + \int_s^t \sum_{i=1}^d D_i f_{\mathbf k}(X_{u-}) dX_{u}^{i}	 +  \frac{1}{2}\int_s^t \sum_{i,j=1}^d D_{ij} f_{\mathbf k}(X_{u-}) dC_{s,u}^{ij}\\
	 & +& \sum_{u\leq t}\left(f_{\mathbf k}(X_{u})- f_{\mathbf k}(X_{u-})- \sum_{i=1}^d D_i f_{\mathbf k}(X_{u-})\Delta X_u^i\right). \label{temp668}
\end{eqnarray}
First, we concentrate on the jump part. Denote by $\mathbf e_i$ the $d-$dimensional vector whose $i-$th entry is 1 and the rest of the components are 0 and by $\mu^X$ the random measure associated to the jumps of $X$. Since $D_i x^\bk \xi = k_i x^{\bk - \be_i} $, 
we obtain
\begin{align}
\lefteqn{
    (X_{u-}+\xi)^{\mathbf k}- (X_{u-})^{\mathbf k}- \sum_{i=1}^d D_i k_i(X_{u-})^{\mathbf k-\mathbf e_i}\xi^i} \qquad \notag\\
 &=  \sum_{|\mathbf{l}|=0}^{|\mathbf k|} \binom{\mathbf k}{\mathbf l} X_{u-}^{\mathbf k- \mathbf l}\xi^{\mathbf l}- (X_{u-})^{\mathbf k}- \sum_{i=1}^d  k_i(X_{u-})^{\mathbf k-\mathbf e_i}\xi^{\mathbf e_i}\notag\\
 & =     \sum_{|\mathbf{l}|=2}^{|\mathbf k|} \binom{\mathbf k}{\mathbf l} X_{u-}^{\mathbf k- \mathbf l}\xi^{\mathbf l} =: W(u,\xi). \label{eq676}
\end{align}
Recall from Equation \eqref{eq:37} that $X^{i}_t=X^i_s + M_{s,t}^{i} + B_{s,t}^{i}$, such that 
\begin{eqnarray}\label{ItoFormula}
	\nonumber f_{\mathbf k}(X_t)&=& f_{\mathbf k}(x) + \int_s^t \sum_{i=1}^d D_i f_{\mathbf k}(X_{u-}) dM_{s,u}^{i} + \int_s^t \sum_{i=1}^d  D_i f_{\mathbf k}(X_{u-}) dB_{s,u}^{i}\\ 
	& +&  \frac{1}{2}\int_s^t \sum_{i,j=1}^d D_{ij} f_{\mathbf k}(X_{u-}) dC_{s,u}^{ij} + \int_s^t \int_{\mathbb{R}^d}W(u,\xi) \mu^X(du,d\xi).
\end{eqnarray}
Compensating $\mu^X$ we obtain that $f_\bk(X)$ is a special semi-martingale with unique semi-martingale decomposition. With the notation from  Equation \eqref{eq:31} we denote the  local martingale part by $M_{s,t}^{f_{\mathbf k}}$ and obtain that 
\begin{eqnarray}
f_{\mathbf k}(X_t) &=& f_{\mathbf k}(x) + M_{s,t}^{f_{\mathbf k}} + \int_s^t \sum_{i=1}^d  D_i f_{\mathbf k}(X_{u-}) dB_{s,u}^{i} + 
 \frac{1}{2}\int_s^t \sum_{i,j=1}^d D_{ij} f_{\mathbf k}(X_{u-}) dC_{s,u}^{ij} \notag\\
&&+ \int_s^t \int_{\mathbb{R}^d}W(u,\xi) \nu(du,d\xi) \notag\\
&=& f_{\mathbf k}(x) + M_{s,t}^{f_{\mathbf k}} + \int_s^t \ccH_uf_{\mathbf k}(X_u)du;
\label{temp689}
\end{eqnarray}
the last equality follows from Lemma \ref{lem:martingale}, in particular from Equation \eqref{eq:31}. 
We apply this representation to the quadratic polynomials $f_{ij}(x):=x_ix_j$, $1 \leq i,j \leq d$ and obtain
\begin{align}\label{Genf2}
 \lefteqn{ \int_s^t \ccH_uf_{ij}(X_u)du } \qquad\\
&=  \int_s^t \sum_{k=1}^d  D_k f_{ij}(X_{u-}) dB_{s,u}^{k} +  \frac{1}{2}\int_s^t \sum_{k,l=1}^d D_{kl} f_{ij}(X_{u-}) dC_{s,u}^{kl}
+  \int_s^t \int_{\mathbb{R}^d}\xi_i \xi_j \nu(du,d\xi) \nonumber \\  
\nonumber &= \int_s^t  X_{u-}^{j}  dB_{s,u}^{i} + \int_s^t  X_{u-}^{i}  dB_{s,u}^{j} +  C_{s,t}^{ij} + \int_s^t \int_{\mathbb{R}^d}\xi_i \xi_j \nu(du,d\xi).\\ 
\nonumber &= \int_s^t  X_{u}^{j} b^{i}(u,X_u)du + \int_s^t   X_{u}^{i} b^{j}(u,X_u) du +  C_{s,t}^{ij} + \int_s^t \int_{\mathbb{R}^d}\xi_i \xi_j \nu(du,d\xi)
\end{align}
with $b^{i}(.,.)$ from Equation  \eqref{eq:bi}. In view of our claim, set 
$$a^{ij}(u,\xi):= \ccH_u f_{ij}(\xi) - \xi_j b^{i}(u, \xi) - \xi_i b^{j}(u, \xi).$$
Since $\ccH_u f_{ij}\in \ccP_2(S)$, by Theorem  \ref{thm:mainequivalence} c) and d), we obtain that  $a^{ij}(u,.) \in \ccP_2(S)$ and $ a^{ij}(., \xi) \in C[0,T)$.  
Moreover, representation \eqref{c2} follows now from  (\ref{Genf2}) and the proof of i) is finshed.
\\

ii): To begin with, define the predictable and increasing process $A$ by
$$A_{s,t}(\omega):=\int_s^t\int_{\mathbb{R}^d}|\xi|^2\nu(\omega; du,d\xi), \quad s \le t \le T.$$
Then, by the same arguments as in the proof of Theorem II.1.8 in \cite{JacodShiryaev}, there exists a transition kernel $K_{t}'(\omega; d\xi)$ on $(\mathbb{R}^d, \ccB(\mathbb{R}^d))$ such that $\nu(\omega; dt, d\xi)=K_{t}'(\omega; d\xi)dA_{s,t}(\omega)$. Moreover, since by (\ref{c2})
\begin{eqnarray*}  
\sum_{i=1}^d C_{s,t}^{ii}(\omega) + A_{s,t}(\omega)= \sum_{i=1}^d\int_s^t a^{ii}(u, X_u(\omega))du 
\end{eqnarray*}
and since $(C_{s,t}^{ii}),  i\in \left\{1,..., d\right\}$ and $(A_{s,t})$ are non-negative increasing processes of finite variation, they are absolutely continuous with respect to Lebesgue measure. By Proposition I.3.13 in  \cite{JacodShiryaev} there exist predictable processes $\tilde{c}^{ii}$ and $\tilde a$ such that 
$$ C_{s,t}^{ii}=\int_s^t\tilde{c}_{u}^{ii}du \quad \text{and} \quad A_{s,t}=\int_s^t \tilde a_{u} du, \quad s \le t \le T. $$
Then $\widetilde{K}_{t}(\omega; d\xi):=\tilde a_{t}(\omega)K_{t}'(\omega;d\xi)$ is again a predictable transition kernel. It moreover satisfies  $\nu(\omega; dt, d\xi)=\widetilde{K}_{t}(\omega; d\xi)dt$ almost surely. 
This allows to obtain the following representation for \eqref{c2}, this time for all $1 \le i,j \le d$: 
\begin{equation*} 
C_{s,t}^{ij}(\omega)= \int_s^t \left( a^{ij}(u, X_u(\omega)) - \int_{\mathbb{R}^d}\xi_i\xi_j\widetilde{K}_u(\omega; d\xi)\right)du, \quad s \le t \le T.
\end{equation*}
Hence,  $(C_{s,t}^{ij})$ is also  absolutely continuous with respect to the Lebesgue measure for $ i \neq j$ yielding the representation 
$ C_{s,t}^{ij}= \int_s^t \tilde{c}_{u}^{ij}du.$
Following Proposition II.2.9 in \cite{JacodShiryaev}, we are able to find $c$ and $K$, such that  $\tilde{c}_{t}(\omega)=c(t,X_t(\omega))$ and $\tilde{K}_t(\omega; d\xi)=K_t(X_t(\omega); d\xi)$, showing (\ref{c3}) (and thus the validity of ii) and (\ref{c3_2}). \\
 
 iii): Inserting the results from ii) into \eqref{temp689} and equalling the predictable finite variation parts we obtain together with  continuity in $s$ that  
\begin{align}
\int_{\mathbb{R}^d} \sum_{|\mathbf{l}|=3}^{|\mathbf k|} \binom{\mathbf k}{\mathbf l} f_{\mathbf k- \mathbf l}(x)\xi^{\mathbf l} K_s(x;d\xi) \label{eq:xq}
  &= \ccH_sf_{\mathbf k}(x) - \sum_{i=1}^d D_if_{\mathbf k}(x)b^{i}(s,x) - \frac{1}{2}\sum_{i,j=1}^d D_{ij}f_{\mathbf k}(x) a^{ij}(s,x) \nonumber
\end{align}
for all $x \in S$.
Now for $| \mathbf k|=3$ this equation just reads
$$ \int_{\mathbb{R}^d} \xi^{\mathbf k} K_s(x;d\xi)\\
  = \ccH_sf_{\mathbf k}(x) - \sum_{i=1}^d D_if_{\mathbf k}(x)b^{i}(s,x) - \frac{1}{2}\sum_{i,j=1}^d D_{ij}f_{\mathbf k}(x) a^{ij}(s,x)$$
and here the right-hand side is in $\ccP_3(S)$ (as a function of $x$) and is continuous in $s$ (as follows immediately from the results proven above), showing \eqref{c4} for $|\mathbf k|=3$. The validity  for general $|\mathbf k|\geq 3$ now follows by induction.
\end{proof}

Our next aim is to reverse the above procedure. We show that a special semi-martingale satisfying i)-iii) of Theorem \ref{PolProcChar} is indeed $m$-polynomial, if the transition kernels $K_t$ satisfy an additional condition.

\begin{theorem}\label{thm:main1}
Let $X=(X_t)_{0 \leq t \leq T}$ be a Markov process with state space $S$ and let $m\geq 2$. Suppose that for all $ 0 \leq s \leq T$ and all $x \in S$ the process $(X_t)_{s \leq t \leq T}$ is a special semi-martingale with respect to $(\Omega,\ccF,\bbF_s,\mathbb{P}_{s,x})$ and that its characteristics $(B_s,C_s,\nu)$ satisfy  $i)-iii)$ of Theorem \ref{PolProcChar}. If
in addition
\begin{equation}\label{Mom1}
	\mathbb{E}_{s,x}\left[\int_{\mathbb{R}^d} |\xi|^m K_t(X_t,d\xi)\right]<\infty, \ \ \text{for almost all}\ \  0\leq s\leq t \leq T,
\end{equation}
or 
\begin{equation}\label{Mom2}
	\int_{\mathbb{R}^d} |\xi|^m K_t(X_t,d\xi)\leq \tilde{C}(1+|X_t|^m), \ \ 0 \leq t \leq T,
\end{equation}
for some constant $\tilde{C}$, then $X$ is an $m$-polynomial process  and for all $g \in \ccP_m(S)$, $0 \leq s <T$ and $x\in S$ it holds that
\begin{eqnarray}
\nonumber \ccH_sg(x)&=&\sum_{i=1}^d D_ig(x)b^i(s,x)+\frac{1}{2}\sum_{i,j=1}^d D_{i,j}g(x)c^{ij}(s,x) \\
\label{gen} &+& \int_{\mathbb{R}^d}(g(x+\xi)-g(x)-\sum_{i=1}^d D_i g(x)\xi_i)K_s(x, d\xi),
\end{eqnarray}
where $b^i, c^{ij}, K_s$ are defined in $i)-iii)$ of Theorem \ref{PolProcChar}.
\end{theorem}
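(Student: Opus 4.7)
The plan is to verify the four conditions $a)$--$d)$ of Theorem \ref{thm:mainequivalence}\ $ii)$, whereby the equivalence established there would immediately yield $m$-polynomiality of $X$. The natural candidate for the infinitesimal generator is the operator $\tilde \ccH_s$ defined by the right-hand side of \eqref{gen}. The strategy is to first identify $\tilde \ccH_s$ via It\^o's formula, then check that $\tilde \ccH_s$ inherits the polynomial-preserving property and continuity structure from the assumptions $i)$--$iii)$, and finally exploit the moment conditions \eqref{Mom1} or \eqref{Mom2} to upgrade local martingales to true martingales and to secure the required integrability.

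First, I would fix $f_\bk(x) = x^\bk$ for $|\bk| \le m$ and apply It\^o's formula under $\mathbb{P}_{s,x}$ in exact analogy with \eqref{temp668}--\eqref{temp689} in the proof of Theorem \ref{PolProcChar}. Substituting the characteristics representation $i)$--$iii)$ yields the canonical decomposition
\[ f_\bk(X_t) = f_\bk(x) + M_{s,t}^{f_\bk} + \int_s^t \tilde \ccH_u f_\bk(X_u)\,du, \qquad s \le t \le T, \]
with $M_{s,t}^{f_\bk}$ a local martingale. The jump integral appearing in $\tilde \ccH_s f_\bk$ is well-defined by \eqref{Mom1} or \eqref{Mom2} together with \eqref{c4}. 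To verify $c)$ and $d)$, I would expand $f_\bk(x+\xi) - f_\bk(x) - \sum_i D_i f_\bk(x)\xi_i$ via the multinomial formula as in \eqref{eq676}; the $|\bl|=2$ jump contribution combines with $c^{ij}$ to reconstitute $a^{ij}$. Hence $\tilde \ccH_s f_\bk$ can be rewritten purely in terms of $b^i(s,\cdot) \in \ccP_1(S)$, $a^{ij}(s,\cdot)\in \ccP_2(S)$, and the higher-order jump moments $\int_{\mathbb{R}^d} \xi^\bl K_s(x,d\xi)$ for $3 \le |\bl| \le |\bk|$, all of which, by $i)$--$iii)$, are polynomials in $x$ of the appropriate degree with continuous dependence on $s$. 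A degree count then gives $\tilde \ccH_s(\ccP_k(S)) \subset \ccP_k(S)$ for every $k \le m$ and produces the continuous-coefficient representation required in $d)$.

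To establish $a)$ and $b)$, I would invoke the moment conditions. Under \eqref{Mom2}, the polynomial structure established above combined with the linear growth of the jump moment in $|x|^m$ yields $|\tilde \ccH_u f_\bk(x)| \le C(1+|x|^m)$. Localizing via $\tau_n := \inf\{t \ge s: |X_t| \ge n\}$ turns the stopped local martingale into a uniformly integrable one; taking expectations and applying Gronwall's lemma to $t \mapsto \mathbb{E}_{s,x}[|X_{t \wedge \tau_n}|^m]$ delivers $\sup_{s \le t \le T} \mathbb{E}_{s,x}[|X_t|^m] < \infty$ after passing $n \to \infty$ by Fatou. This gives $a)$ and also forces $\mathbb{E}_{s,x}[\int_s^T |\tilde \ccH_u f_\bk|(X_u)\,du] < \infty$, upgrading $M^{f_\bk}$ from a local to a true $(\mathbb{P}_{s,x},\bbF_s)$-martingale, i.e. $b)$. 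Under \eqref{Mom1} the argument is analogous, using the direct integrability of the jump moment. Once $a)$--$d)$ are verified for $\tilde \ccH_s$, Lemma \ref{lem:martingale} combined with differentiation at $t=s$ of the integral equation $P_{s,t}f_\bk = f_\bk + \int_s^t P_{s,u}\tilde \ccH_u f_\bk\,du$ identifies $\tilde \ccH_s$ with the true infinitesimal generator $\ccH_s$ on polynomials, thereby confirming formula \eqref{gen}. Applying Theorem \ref{thm:mainequivalence}\ $ii) \Rightarrow i)$ completes the proof.

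The main obstacle is the third step: the moment conditions \eqref{Mom1} and \eqref{Mom2} are precisely what is needed to control the jump contribution and to convert the local martingales arising from It\^o's formula into genuine martingales with uniformly bounded $m$-th moments. Without such a condition a Markov process could have polynomial characteristics in the sense of $i)$--$iii)$ while admitting higher-order moments that blow up in finite time, so that the polynomial property $P_{s,t}\ccP_m(S) \subset \ccP_m(S)$ at the level of transition operators would fail.
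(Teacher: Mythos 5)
Your overall architecture coincides with the paper's: apply It\^o's formula to identify the candidate generator $\ccL_u$ given by the right-hand side of \eqref{gen}, use conditions $i)$--$iii)$ of Theorem \ref{PolProcChar} to show $\ccL_u(\ccP_k(S))\subset\ccP_k(S)$ with continuous coefficients, upgrade the resulting local martingales to true martingales, and conclude via Lemma \ref{lem:martingale} and Theorem \ref{thm:mainequivalence}. However, there is a genuine gap in your third step, which you yourself identify as the main obstacle. Your localization-plus-Gronwall argument delivers only $\sup_{s\le t\le T}\mathbb E_{s,x}\bigl[|X_t|^m\bigr]<\infty$, and from this together with $\mathbb E_{s,x}\bigl[\int_s^T|\ccL_u f_{\mathbf k}|(X_u)\,du\bigr]<\infty$ you cannot conclude that the local martingale $M^{f_{\mathbf k}}$ is a true martingale: an $L^1$-bounded local martingale need not be a martingale (the inverse three-dimensional Bessel process is the standard counterexample). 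To pass to the limit along a localizing sequence $\sigma_n$ one needs uniform integrability of $\{f_{\mathbf k}(X_{t\wedge\sigma_n})\}_n$, and the only available route is domination, i.e.\ the strictly stronger bound $\mathbb E_{s,x}\bigl[\sup_{s\le u\le \tau}|X_u|^m\bigr]<\infty$. This is precisely what the paper's Lemma \ref{lem:moment} provides, by reducing to the time-homogeneous setting via a space-time transformation and invoking the Burkholder--Davis--Gundy inequality applied to the martingale part; your sketch never produces a bound on the running supremum.

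Two smaller points in the same step. First, $\tau_n=\inf\{t\ge s:|X_t|\ge n\}$ is not a priori a reducing sequence for $M^{f_{\mathbf k}}$ (you must intersect it with one), and since $X$ jumps, $|X_{\tau_n}|$ can overshoot $n$ by an arbitrary amount; controlling $\mathbb E_{s,x}[|X_{t\wedge\tau_n}|^m]$ therefore already requires the jump-moment conditions \eqref{Mom1} or \eqref{Mom2} inside the Gronwall estimate, which you do not spell out. Second, your closing appeal to Theorem \ref{thm:mainequivalence} requires $m$ to be even (as does the paper's own proof), a hypothesis worth making explicit. The remaining parts of your proposal --- the multinomial expansion reconstituting $a^{ij}$ from $c^{ij}$ and the second jump moment, the degree count giving conditions $c)$ and $d)$, and the identification $\ccH_s g=\ccL_s g$ by differentiating $P_{s,t}g=g+\int_s^tP_{s,u}\ccL_u g\,du$ at $t=s$ --- match the paper's argument and are sound.
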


\begin{proof}
Let $g \in \ccP_m$ and define 
$$V(x,\xi):= g(x+\xi)- g(x)- \sum_{i=1}^d D_i g(x)\xi_i, \quad x \in S, \ \xi \in \R^d.$$ 
By Taylor's formula $V(x,\xi)= \sum_{|\mathbf l|=2}^m D^{\mathbf l}g(x) \frac{\xi^{\mathbf l}}{\mathbf l!}$ and so 
$$|V(x,\xi)|  \leq \left(|\xi|^2\wedge |\xi|^m\right) \sum_{|\mathbf l|=2}^m \frac{|D^{\mathbf l}g(x)|}{\mathbf l !} =: \left(|\xi|^2\wedge |\xi|^m\right) \tilde{h}(x).$$  
In particular, (\ref{c2}) and (\ref{c4}) imply that
\begin{equation*}
\int_{\mathbb{R}^d}|V(x,\xi)|K_u(x, d\xi)\leq \tilde{h}(x)\int_{\mathbb{R}^d}\left(|\xi|^2\wedge |\xi|^m\right)K_u(x,d\xi) < \infty. 
\end{equation*}
Hence the process $(\int_s^{t}\int_{\mathbb{R}^d}V(X_u,\xi)K_u(X_u, d\xi)du)_{s \leq t \leq T}$ is of locally integrable variation.  It\^o's formula yields that
\begin{eqnarray}\label{LMM}
M_{s,t}^{g}&:=& g(X_t) - g(x) - \int_s^t \sum_{i=1}^d D_i g(X_u)b^{i}(u,X_u)du\\ 
\nonumber &-& \frac{1}{2}\int_s^t\sum_{i,j=1}^d D_{ij}g(X_u)c^{ij}(u,X_u)du - \int_s^t\int_{\mathbb{R}^d}V(X_u,\xi)K_u(X_u,d\xi)du, \quad s \le t \le T,
\end{eqnarray}
is a local martingale with respect to the stochastic basis $(\Omega,\ccF,\bbF_s,\mathbb{P}_{s,x})$ (compare the proof of Theorem II.2.42, $a) \Rightarrow c)$, in \cite{JacodShiryaev}). Define the generator
$$ \ccL_u g (x):= \sum_{i=1}^d D_i g(x)b^{i}(u,x) 
+ \frac{1}{2} \sum_{i,j=1}^d D_{ij}g(x)c^{ij}(u,x)+ \int_{\mathbb{R}^d}V(x,\xi)K_u(x,d\xi).$$
By validity of  $i)-iii)$ from Theorem \ref{PolProcChar},
$$ c^{ij}(u,x)=a^{ij}(u,x) - \int_{\R^d} \xi_i\xi_j K_u(x,d\xi).$$
Moreover, using again Taylors' formula,
$$ V(x,\xi) - \frac 1 2 \sum_{i,j=1}^d D_{ij}g(x)\xi_i\xi_j = \sum_{|\mathbf l|=3}^m D^{\mathbf l}g(x) \frac{\xi^{\mathbf l}}{\mathbf l!}$$
(with the convention that  $\sum_{|l|=3}^m (\ldots):=0$ if $m=2$). Hence,
\begin{equation*}
  \ccL_u g (x) = \sum_{i=1}^d D_i g(x)b^{i}(u,x) 
+ \frac{1}{2} \sum_{i,j=1}^d D_{ij}g(x)a^{ij}(u,x) 
+\int_{\mathbb{R}^d}\bigg( \sum_{|\mathbf l|=3}^m D^{\mathbf l}g(x) \frac{\xi^{\mathbf l}}{\mathbf l!}\bigg) K_u(x,d\xi). 
\end{equation*}
  Conditions  $i) -iii)$ from Theorem \ref{PolProcChar} imply that $\ccL_u g \in \ccP_m(S)$ and that $u \mapsto \ccL_ug(x)$ is continuous. Hence there exists a continuous non-negative function $\beta(u)$ and a constant $\tilde C$ such that $|\ccL_ug(x)| \leq \tilde C \beta(u) (1+|x|^m)$.

Next, we show that $M^g$ is actually a true martingale. 
 Using that  $|g(y)| \leq C'(1+|y|^m)$ for some constant $C'$ we obtain
\begin{eqnarray}\lefteqn{
 \mathbb E_{s,x} \left[\sup_{s \leq \tau \leq t} |M_{s,\tau}^g|\right]
\leq \mathbb E_{s,x}[|g(X_\tau)|] + |g(x)| + \tilde C \int_s^\tau \beta(u) \: \mathbb E_{s,x}[|X_u|^{m}]du} \qquad \notag\\
&\leq& C'(1+ \mathbb E_{s,x}[|X_\tau|^m|]) + |g(x)| + \tilde C \int_s^t \beta(u) du \cdot  \mathbb E_{s,x}\left[\sup_{s\leq u \leq \tau}|X_u|^{m}\right].\label{eq:812}
\end{eqnarray}
Now,  
\begin{equation*}
  E_{s,x}\left[\sup_{s\leq u \leq \tau}|X_u|^{m}\right] < \infty
\end{equation*}
 follows from assumptions (\ref{Mom1}) and/or (\ref{Mom2}) as in \cite{CuchieroKellerResselTeichmann2012} by an application of the Burkholder-Davis-Gundy inequality and we provide a precise proof in Lemma \ref{lem:moment} below. 
 Since $M^g$ is a local martingale, the resulting finiteness of \eqref{eq:812} implies that is indeed a true martingale.

As in the proof of $i) \Rightarrow ii)$ in Lemma \ref{lem:martingale} we see that 
 $$ P_{s,t}g(x)= g(x) + \int_s^t P_{s,u} \ccL_u g(x) du.$$ 
 In particular, as in the proof of Theorem \ref{thm:mainequivalence}, $u \mapsto P_{s,u}\ccL_ug(x)$ turns out to be continuous and hence the following limit exists:
\begin{eqnarray*}
 \lim_{ h \searrow 0} h^{-1} \left( P_{s,s+h}g(x)-g(x) \right) = \ccL_s g(x).
\end{eqnarray*}
Hence, $g \in \ccD(\ccH_s)$ and  $\ccH_sg(x) = \ccL_sg(x)$. 
Summarizing, we have shown that all conditions $a) -d)$ in Theorem \ref{thm:mainequivalence} are satisfied and therefore $X$ is $m$-polynomial.  
\end{proof}

The proof of Theorem \ref{thm:main1} relied on the following lemma, which is an adaption of
Lemma 2.17 in \cite{CuchieroKellerResselTeichmann2012} to the time-inhomogeneous setting. 
\begin{lemma}\label{lem:moment}
Let $0 \leq s < \tau \leq T$ and $x \in E$ be fixed. Consider a semi-martingale $X$ and assume that its semi-martingale characteristics $(B,C,\nu)$ satisfy points $i)-iii)$ of Theorem \ref{PolProcChar}. Then there exists a constant $\tilde{C}_0$ such that
\begin{align}\label{eq:moment1}
	\mathbb{E}_{s,x}\left[\sup_{s \leq u\leq \tau} |X_u|^m\right]& \leq \tilde{C}_0\left( |x|^m + 1 + \int_s^\tau\mathbb{E}_{s,x}\left[\int_{\mathbb{R}^d} |\xi|^m K_u(X_u,d\xi)\right] du \right.\nonumber\\
	 &\qquad \left. {} + \int_s^\tau \mathbb{E}_{s,x}\left[|X_u|^m\right]du \right).
\end{align}
Moreover, if one of the conditions (\ref{Mom1}) or (\ref{Mom2}) is satisfied, then there exist finite constants $\tilde C_1$ and $\tilde C_2$ such that
\begin{equation}\label{eq:moment2}
	\mathbb{E}_{s,x}\left[\sup_{s \leq u\leq \tau} |X_u|^m\right]\leq  \tilde C_1 e^{\tilde C_2\tau}.
\end{equation}
\end{lemma}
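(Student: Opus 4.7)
The plan is to apply It\^o's formula to $g(x) = |x|^m$ (which is $C^2$ on $\R^d$ for $m \geq 2$), then combine pointwise estimates on the resulting drift with the Burkholder--Davis--Gundy inequality for the martingale component. Throughout I would work with the localising sequence $\tau_n := \inf\{t > s : |X_t| \geq n\} \wedge \tau$ to guarantee that all expectations are finite, and only pass to the limit at the end by monotone convergence.

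The first step is to write the It\^o decomposition arising from the characteristics of Theorem \ref{PolProcChar}:
\begin{align*}
|X_t|^m - |x|^m = N_t + \int_s^t \ccL_u g(X_u)\, du,
\end{align*}
where $N$ is a local martingale and
\begin{align*}
\ccL_u g(x) = \sum_{i} D_i g(x) b^i(u,x) + \tfrac{1}{2}\sum_{i,j} D_{ij}g(x) c^{ij}(u,x) + \int_{\R^d} V(x,\xi) K_u(x,d\xi),
\end{align*}
with $V(x,\xi) := g(x+\xi) - g(x) - \nabla g(x)\cdot \xi$. Since $|D_i g(x)| \leq m|x|^{m-1}$ and $|D_{ij} g(x)| \leq C|x|^{m-2}$, the polynomial growth of $b^i$, $c^{ij}$ (degrees $1$ and $2$ by parts $i)$--$ii)$ of Theorem \ref{PolProcChar}) combined with Young's inequality yields $|D_ig(x) b^i(u,x)| + |D_{ij}g(x) c^{ij}(u,x)| \leq C_1(1+|x|^m)$. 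Applying second-order Taylor to $V$ gives $|V(x,\xi)| \leq C_2(|\xi|^2 |x|^{m-2} + |\xi|^m)$, and combining with $\int|\xi|^2 K_u(x,d\xi) \leq C(1+|x|^2)$ from part $iii)$ produces
\begin{align*}
\int_{\R^d} |V(x,\xi)|\, K_u(x,d\xi) \leq C_3(1+|x|^m) + \int_{\R^d}|\xi|^m K_u(x,d\xi).
\end{align*}

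For the martingale piece I would invoke Burkholder--Davis--Gundy, $\E_{s,x}[\sup_{u \leq \tau\wedge\tau_n}|N_u|] \leq c\,\E_{s,x}[[N]^{1/2}_{\tau\wedge\tau_n}]$, and control the quadratic variation of the continuous and discontinuous pieces separately, splitting the jumps of $N$ at $|\xi|=1$ so that the small-jump part is square-integrable via $\int|\xi|^2 K_u$, while the large-jump part is dominated pathwise by the $|\xi|^m$ kernel integrand (plus its compensator). The resulting square-root terms are broken by $\sqrt{ab} \leq \epsilon a + \epsilon^{-1}b$, so that a small multiple of $\E_{s,x}[\sup_{u \leq \tau\wedge\tau_n}|X_u|^m]$ is absorbed into the left-hand side, leaving only $\int_s^\tau \E_{s,x}[|X_u|^m]\,du$ and $\int_s^\tau \E_{s,x}[\int|\xi|^m K_u(X_u,d\xi)]\,du$ on the right. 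Sending $n \to \infty$ by monotone convergence yields \eqref{eq:moment1}.

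For \eqref{eq:moment2}, either \eqref{Mom1} or \eqref{Mom2} turns the kernel integral in \eqref{eq:moment1} into a term dominated (up to a constant) by $\int_s^\tau \E_{s,x}[|X_u|^m]\,du$. Setting $\phi(\tau) := \E_{s,x}[\sup_{s \leq u \leq \tau}|X_u|^m]$, which is finite at each level thanks to the localisation, the inequality becomes $\phi(\tau) \leq A(1+|x|^m) + B\int_s^\tau \phi(u)\,du$, and Gronwall's lemma delivers the exponential bound. The main obstacle, as in the time-homogeneous case \citep{CuchieroKellerResselTeichmann2012}, is the jump bracket: Theorem \ref{PolProcChar}$iii)$ controls $K_u$-moments only up to order $m$, whereas a naive BDG bound on the discontinuous part of $N$ formally asks for $\int V^2 K_u$ with exponents up to $2m$; the truncation at $|\xi|=1$ and the separate treatment of small and large jumps are precisely the device that lets us avoid this.
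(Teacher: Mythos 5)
Your argument is correct in outline, but it takes a genuinely different route from the paper. The paper's proof is a two-line reduction: it passes to the space--time process $Y_t=(t,X_t)$, observes that $Y$ is a \emph{time-homogeneous} semimartingale whose differential characteristics inherit the growth conditions (2.11)--(2.13) of Proposition 2.12 in \cite{CuchieroKellerResselTeichmann2012} (continuity of $b^i(\cdot,x)$, $a^{ij}(\cdot,x)$ and of the coefficients $\alpha_{\mathbf l}$ on the compact time interval supplies the uniform polynomial bounds), and then quotes Lemma 2.17 of that paper verbatim to obtain both \eqref{eq:moment1} and \eqref{eq:moment2}. What you propose instead is to re-derive that lemma from scratch in the inhomogeneous setting: It\^o's formula for $|x|^m$, pointwise drift estimates, Burkholder--Davis--Gundy on the martingale part with the jump bracket split at $|\xi|=1$, absorption of an $\epsilon$-multiple of the supremum, and Gronwall. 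This is exactly the content of the cited Lemma 2.17, so your approach buys self-containedness and explicit constants at the price of redoing a delicate computation the paper deliberately outsources; the paper's space--time trick buys brevity and works precisely because the moment estimate only needs growth bounds in $x$ uniform in $t$, not any polynomial structure in the time variable. Two small points if you were to write your version out in full: the localised quantity $\E_{s,x}[\sup_{u\le \tau\wedge\tau_n}|X_u|^m]$ is not bounded by $n^m$ alone because of the possible overshoot at $\tau_n$, so its finiteness (needed before Gronwall) requires a separate appeal to the jump-moment condition; and under \eqref{Mom1} the kernel term in \eqref{eq:moment1} is merely finite rather than dominated by $\int_s^\tau\E_{s,x}[|X_u|^m]\,du$, which still feeds into Gronwall but not quite as you phrase it.
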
 
\begin{proof}
Define for $s\leq t <T$ the  process $Y$ by $Y_t=(t,X_t)$ 
with state space $\tilde{E}:= [s,T)\times \mathbb{R}^d$ and sample space $\tilde{\Omega}:= [s,T)\times \Omega$. The process $Y$ is a space-time homogeneous Markov process. In particular, this process is a $(d+1)$-dimensional semi-martingale whose characteristics $(\tilde{B},\tilde{C},\tilde{\nu})$ are defined by:
\begin{eqnarray*}
\tilde{B}_t&=& \int_s^t\tilde{b}(Y_u)du = \int_s^t b(u,X_u)du \\
\tilde{C}_t&=& \int_s^t\tilde{c}(Y_u)du = \int_s^t c(u,X_u)du \\
\tilde{\nu}(\tilde{\omega};dt,d\xi)&=& \tilde{K}(Y_t(\tilde{\omega}),d\xi)dt= K_t(X_t(\omega),d\xi)dt.
\end{eqnarray*}
Since $b$, $c$ and $K$ satisfy $i)-iii)$ in Theorem \ref{PolProcChar}, the characteristics $(\tilde{B}, \tilde{C}, \tilde{\nu})$ satisfy the conditions (2.11)-(2.13)  from Proposition 2.12 in \cite{CuchieroKellerResselTeichmann2012}. Then,  Lemma 2.17 in \cite{CuchieroKellerResselTeichmann2012} yields (\ref{eq:moment1}) and (\ref{eq:moment2}).
\end{proof}

\subsection{Polynomial jump-diffusions}

We conclude this section by introducing a class of processes to which Theorem \ref{thm:main1} applies.

\begin{definition}\label{PolJumpDiff} 
  Let $m \geq 2$ and let $X=(X_t)_{0 \leq t \leq T}$ be an $S$-valued semi-martingale with differential characteristics $(b,c,K)$. Then we call $X$ an \emph{$m$-polynomial jump-diffusion} if the following holds:
  \begin{enumerate}[i)]
      \item For all $1 \leq i \leq d$, $0 \leq t < T$ and all $x \in S$: 
$$ b^i(t,.) \in \ccP_1(S) \quad \text{and} \quad b^i(.,x) \in C[0,T).$$
  \item For all  $1 \leq i,j \leq d$, $0 \leq t < T$ and all $x \in S$: 
$$ c^{ij}(t,.) \in \ccP_2(S) \quad \text{and} \quad c^{ij}(.,x) \in C[0,T).$$
 \item For all $2 \leq |\mathbf k | \leq m$ there exist $\alpha_{\mathbf l} \in C[0,T), 0 \leq |\mathbf l| \leq |\mathbf k|$ such that
$$ \int_{\R^d} \xi^{\mathbf k}  K_t(x,d\xi) = \sum_{|\mathbf l|=0}^{|\mathbf k|} \alpha_{\mathbf l}(t) x^{\mathbf l}, \qquad x \in S, \quad t \in [0,T).$$
\item Either \begin{equation*}
	\mathbb{E}_{s,x}\left[\int_{\mathbb{R}^d} |\xi|^m K_t(X_t,d\xi)\right]<\infty, \ \ \text{for almost all}\ \  0\leq s\leq t \leq T,
\end{equation*}
or
\begin{equation*}
	\int_{\mathbb{R}^d} |\xi|^m K_t(X_t,d\xi)\leq \tilde{C}(1+|X_t|^m), \ \ 0 \leq t \leq T.
\end{equation*}
  \end{enumerate}
\end{definition}
Noting that the above assumptions ii) and iii), for $|\mathbf k|=2$, imply that the functions
$$ a^{ij}(u,y)=c^{ij}(u,y) + \int_{\R^d } \xi_i\xi_j K_u(y,d\xi)$$
satisfy all requirements of Theorem \ref{PolProcChar}, part i), the following corollary immediately follows from Theorem \ref{thm:main1}.
\begin{corollary}\label{cor:jump} 
 Let $X=(X_t)_{0 \leq t \leq T}$ be an $S$-valued $m$-polynomial jump-diffusion. Then $X$ is an $m$-polynomial process and its infinitesimal generator $(\ccH_s)_{0 \le s \le T}$ satisfies 
\begin{eqnarray*}
\nonumber \ccH_sf(x)&=&\sum_{i=1}^d D_if(x)b^i(s,x)+\frac{1}{2}\sum_{i,j=1}^d D_{i,j}f(x)c^{ij}(s,x) \\
&+& \int_{\mathbb{R}^d}(f(x+\xi)-f(x)-\sum_{i=1}^d D_i f(x)\xi_i)K_s(x, d\xi),
\end{eqnarray*}
for all $f \in \ccP_m(S)$, $0 \leq s <T$ and $x\in S$.
\end{corollary}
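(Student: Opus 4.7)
The strategy is to read off that the hypotheses of Theorem \ref{thm:main1} are all in force and then invoke that theorem directly; the explicit form of $\ccH_s$ in the corollary is nothing but the generator formula \eqref{gen}.

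First I would verify that the differential characteristics $(b,c,K)$ of the $m$-polynomial jump-diffusion satisfy conditions i)--iii) of Theorem \ref{PolProcChar}. Condition i) of that theorem asks for $b^i(t,\cdot)\in\ccP_1(S)$ with continuous time dependence, which is exactly part i) of Definition \ref{PolJumpDiff}, and for $a^{ij}(t,\cdot)\in\ccP_2(S)$ with $a^{ij}(\cdot,x)\in C[0,T)$. Setting
\[
a^{ij}(u,x):=c^{ij}(u,x)+\int_{\R^d}\xi_i\xi_j\,K_u(x,d\xi),
\]
the first summand lies in $\ccP_2(S)$ with continuous coefficients by part ii), and the second summand is precisely the moment integral covered by part iii) with $|\mathbf k|=2$; adding them preserves both polynomiality and continuity in $u$, so condition i) of Theorem \ref{PolProcChar} holds. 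Condition ii) of the theorem, that $C$ admits the density representation with positive semidefinite $c$, is built into the notion of differential characteristics $(b,c,K)$. Condition iii) of the theorem splits into two: the kernel density representation $\nu(\omega;dt,d\xi)=K_t(X_t(\omega),d\xi)dt$ is again part of the definition of differential characteristics, and the polynomial moment identity \eqref{c4} for $3\leq |\mathbf k|\leq m$ is exactly part iii) of Definition \ref{PolJumpDiff} restricted to $|\mathbf k|\geq 3$.

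Next I would observe that the moment assumption iv) of Definition \ref{PolJumpDiff} is verbatim one of the two alternatives \eqref{Mom1}/\eqref{Mom2} required by Theorem \ref{thm:main1}. With all hypotheses in place, Theorem \ref{thm:main1} applies and yields at once that $X$ is $m$-polynomial and that for every $g\in\ccP_m(S)$, $0\leq s<T$ and $x\in S$,
\[
\ccH_sg(x)=\sum_{i=1}^d D_ig(x)b^i(s,x)+\tfrac12\sum_{i,j=1}^d D_{ij}g(x)c^{ij}(s,x)+\int_{\R^d}\!\bigl(g(x+\xi)-g(x)-\textstyle\sum_i D_ig(x)\xi_i\bigr)K_s(x,d\xi),
\]
which is the claimed formula.

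There is no real obstacle here: the corollary is a pure bookkeeping exercise that matches the data of a polynomial jump-diffusion with the abstract hypotheses of Theorem \ref{thm:main1}. The only subtle point, already flagged in the paragraph preceding the corollary, is the derivation of the $a^{ij}$ bounds from $c^{ij}$ together with the second moment of $K_t$; once this is noticed, everything else is immediate.
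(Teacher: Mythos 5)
Your proposal is correct and follows essentially the same route as the paper: the paper's own justification is precisely the remark preceding the corollary, namely that parts ii) and iii) of Definition \ref{PolJumpDiff} (with $|\mathbf k|=2$) yield $a^{ij}(u,y)=c^{ij}(u,y)+\int_{\R^d}\xi_i\xi_j K_u(y,d\xi)$ satisfying Theorem \ref{PolProcChar} i), after which Theorem \ref{thm:main1} applies directly. You correctly identified this as the only nontrivial verification, so nothing is missing.
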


\section{Examples}\label{sec:examples}

In this section we use the characterization results discussed above to present some examples of polynomial processes, as well as examples of stochastic processes which do not fall into this category.\\
 
We start by showing that some widely used and well-known classes of stochastic processes are indeed  polynomial. This indicates the wide applicability of polynomial processes.

\subsection{Polynomial diffusions}\label{subsec:polydif}    

An $S$-valued semi-martingale $(X_t)_{0 \leq t \leq T}$  is called a \emph{polynomial diffusion} if it is a polynomial It\^o process without jumps, i.e.\ $K_t=0$ for all $0 \leq t \leq T$. 
By Corollary \ref{cor:jump}, a polynomial diffusion is a polynomial process. In particular, if $c=\sigma\sigma^\top,$ with $\sigma=\sigma(t,x) \in \R^{d \times d}$ such that
$$ \sigma^{ij}(t,.) \in \ccP_1(S) \quad \text{and} \quad \sigma^{ij}(.,x) \in C[0,T]$$
for all $1 \leq i,j \leq d, x \in S$ and $0 \leq t < T$, then $X$ is a strong solution of the stochastic differential equation
$$ dX_t = b(t,X_t)dt + \sigma(t,X_t) dW_t,$$
where $W$ is standard $d$-dimensional Brownian motion. Recall that by definition, $b^i(t,.)\in \ccP_1(S)$ for all $0 \le t \le T$.

\subsection{Time inhomogeneous L\'evy processes}
An $\R^d$-valued process $(X_t)_{0 \leq t \leq T}$ is called time-homogeneous L\'evy process if it has independent increments and for every $0 \leq t \leq T$ the law of $X_t$ is characterized by the characteristic function
\begin{align*}
 \mathbb E \left[ e^{i \langle u , X_t \rangle}\right] 
&= \exp \left( \int_0^1 \left[ i \langle u , b_s \rangle - \frac 1 2 \langle u , c_s u \rangle  + \int_{\R^d} \left( e^{i\langle u, \xi\rangle} - 1 -i \langle u,\xi \rangle  1_{\{|\xi| \leq 1\}}\right) K_s(d\xi) \right] ds\right),
\end{align*}  
where for every $0 \leq s \leq T$ it holds that $b_s \in \R^d, c_s \in \R^{d \times d}$ is symmetric and positive definite and $K_s$ is a measure on $\R^d$, see \cite{Kluge05}. Then $X$ is a semi-martingale with differential characteristic $(b,c,K)$, see \cite{JacodShiryaev}. In particular, if $s \mapsto b_s$ and $s \mapsto c_s$ are continuous and if $F$ satisfies condition (\ref{Mom1}) or (\ref{Mom2}) for some $m \geq 2$, then $X$ is $m$-polynomial.

\subsection{Affine processes} 

A stochastically continuous affine process $X$ on $S=\mathbb{R}_+^p \times \mathbb{R}^{d-p}$ is m-polynomial if the killing rate is constant and if the L\'evy measures $\nu_i(t,\cdot)$, for $i \in \{1,...,p\}$, satisfy
\begin{equation}
  \label{eq:43}
\int_{|\xi|>1}|\xi|^m \nu_i (t, d\xi)<\infty, \qquad t \ge 0.
\end{equation}

Indeed, by Theorem 2.13 in \cite{Filipovic05}  we know that a strongly regular affine process is a Feller semi-martingale with infinitesimal generator given by: 
\begin{eqnarray}
\nonumber \ccH f(x)&=& \sum_{k,l=1}^d A_{kl}(t,x) D_{k,l}^2 f(x) + \left\langle B(t,x),\nabla f(x)\right\rangle - C(t,x)f(x) \\
\nonumber&+& \int_{E\setminus \left\{0\right\}} \left(f(x+\xi)-f(x)-\left\langle \nabla f(x),\chi(\xi)\right\rangle\right)  M(t,x,d\xi),
\end{eqnarray} 
where
\begin{eqnarray} 
\nonumber	A(t,x)&=& a(t)+\sum_{i=1}^{m} \alpha_i(t) x_i, \ \ \ a(t),\alpha_i(t)\in\mathbb{R}^{d\times d},\\
\nonumber	B(t,x)&=& b(t) + \sum_{i=1}^n \beta_i(t) x_i, \ \ \ b(t), \beta_i(t) \in \mathbb{R}^d, \\ 
\nonumber	C(t,x)&=& c(t) + \sum_{i=1}^m \gamma_i(t) x_i, \ \ \ c(t),\gamma_i(t) \in\mathbb{R}_+, \\
\nonumber	M(t,x,d\xi)&=& \lambda(t,d\xi) + \sum_{i=1}^p \nu_i(t,d\xi)x_i,
\end{eqnarray}
and where $\lambda(t,\cdot)$ is a Borel measure on $E\setminus \left\{0\right\}$. In particular, the differential characteristic are affine functions in $X$. The properties of the coefficients are characterized by the so-called admissibility condition and continuity. For details we refer to \cite{Filipovic05}. Note that here we assume in addition continuous differentiability. 
(Time-inhomogeneous) affine processes without the assumption of stochastic continuity have bee studied in \cite{KellerResselSchmidtWardenga2018}. 

\pagebreak

\subsection{Examples of time-inhomogeneous processes which are not m-polynomial}

\subsubsection*{A process with quadratic drift} Let $S\subset \mathbb{R}$ be closed. 
Let $X$ be the  solution of the SDE (until explosion)
$$dX_t=(a(t)+b(t)X_t + X_t^2)dt + dW_t, \ \ X_0=x,$$
where $a(t)$ and $b(t)$ are continuous and bounded functions. The generator of the process can be easily obtained and is given by the next expression,
$$\ccH_tf(x)=(a(t)+b(t)x+x^2)\frac{df(x)}{dx}+\frac{1}{2}\frac{d^2f(x)}{dx^2}.$$
But this operator maps polynomials of degree $m$ into polynomials of degree $m+1$. Namely, if we take $f_m(x)=x^m\in\ccP_m$ and apply the operator $\ccH_t$, then
$$\ccH_tf_m(x)=ma(t)x^{m-1}+mb(t)x^m+mx^{m+1}+\frac{m(m-1)}{2}x^{m-2}.$$
We can see the that $\ccH_tf_m\in\ccP_{m+1}$. So we conclude that $\ccH_t\left(\ccP_m\right)\subset\ccP_{m+1}$ and therefore the process $X$ is not $m-$polynomial. 

\subsubsection*{The Cauchy process}
Following Example 1.12.2 from \cite{GulisashviliCasteren2006}, we call an $\R^d$-valued Markov process a  Cauchy process if its transition density is given by 
$$p_{s,t}(x,y)=\Gamma\left(\frac{d+1}{2}\right)\frac{t-s}{\left[\pi\left((t-s)^2+\left|y-x\right|^2\right)\right]^{\frac{1}{2}(d+1)}}, \ \ 0\leq s<t, x\in\mathbb{R}^d, y\in\mathbb{R}^d.$$
For the sake of simplicity in the next computation we are just considering the case $d=1$, $x=0$ and $s=0$. First, we are going to take $f_1(x)=x \in \ccP_1(\mathbb{R})$ and compute for $t>0$ 
\begin{eqnarray}
\nonumber P_{0,t}f_1(0)&=&\mathbb{E}[X_t|X_0=0]\\
\nonumber &=& \frac{t}{\pi}\int_{\mathbb{R}_+}\frac{y}{t^2+y^2}dy
= \frac t \pi \left.\frac{\ln (t^2+y^2)}{2}\right|_0^{\infty}
= \infty.
\end{eqnarray}
The same result would be obtained for every $f(x)\in \ccP_k$, $k=0,...,m$ and for every $m\in\mathbb{N}$, since it is known that the Cauchy distribution has infinite moments of every order. In particular, the Cauchy process provides a further example of a Markov process which is not $m$-polynomial for any $m \in \N$.

\subsection{An $m$-polynomial process which is not affine}
We consider again the one-dimensional stochastic process given by Equation (\ref{easyexample}),
	\[ X_t=\int_s^ta(u)du +W_t
\]
and we also consider, for some constant $A_0,A_1,A_2$, the process
	\[ Y_t=A_0 + A_1X_t + A_2 X_t^2.
\]
We will now show that the process $(X,Y)$ is a two-dimensional polynomial process, which is not affine.
From It\^o's lemma, we know that the dynamics of $Y_t$ is given by
	\[ dY_t=(A_1+ A_2+ 2A_2X_t)a(t)dt+ (A_1 + 2A_2X_t)dW_t
\]
and hence 
	\[\begin{pmatrix} dX_t\\dY_t \end{pmatrix} = \left( \begin{pmatrix} a(t)\\ (A_1 + A_2)a(t) \end{pmatrix}  +\begin{pmatrix} 0\\ 2A_2a(t)X_t \end{pmatrix} \right)dt + \begin{pmatrix} 1\\ A_1 + 2A_2X_t\end{pmatrix}dW_t.\]
As we have discussed in Section \ref{subsec:polydif}, this is the dynamics of a polynomial diffusion. Let us compute its second characteristic: Since
$\begin{pmatrix} 1\\ A_1 + 2A_2X_t\end{pmatrix}$ is the standard deviation, the variance is given by

	\[\begin{pmatrix} 1\\ A_1 + 2A_2X_t\end{pmatrix} \begin{pmatrix} 1 &  A_1 + 2A_2X_t\end{pmatrix}=\begin{pmatrix} 1& A_1 + 2A_2X_t \\ A_1 + 2A_2X_t & (A_1 + 2A_2X_t)^2 \end{pmatrix}.
\]
It can be expressed as:
	\[ \begin{pmatrix} 1& A_1\\ A_1  & A_1^2 \end{pmatrix} + \begin{pmatrix} 0& 2A_2\\ 2A_2  & 4A_1A_2 \end{pmatrix}X_t +  \begin{pmatrix} 0& 0\\ 0  & 4A_2^2 \end{pmatrix}X_t^2.
\]
Then the second characteristic is:
	\[C_t=\int_0^t\left[ \begin{pmatrix} 1& A_1\\ A_1  & A_1^2 \end{pmatrix} + \begin{pmatrix} 0& 2A_2\\ 2A_2  & 4A_1A_2 \end{pmatrix}X_u +  \begin{pmatrix} 0& 0\\ 0  & 4A_2^2 \end{pmatrix}X_u^2 \right]du,
\]
Since the second characteristic is a quadratic function, it is not an affine process.

\subsection{A Jacobi-process with jumps}

Energy markets have some interesting stylized facts: they inhibt strong seasonal effects, have upward and downward jumps / spikes. Most notably the spot price of electricity has upper and lower bounds. Time-inhomogeneous polynomial processe are ideally suited to capture all these effects as we now show. A more detailed application can be found in \cite{Agoitia2017}.

In fact, given our results above we can use a variant of the Jacobi process with jumps (here de-seasonalized)
$$ dS_t = \kappa (\theta - S_t) dt  + \sqrt{ S_t (1-S_t)} dW_t + dJ_t $$
    where (downward jumps, $-1 \le a < b < 0$)
    $$ K(t,d\xi) =  \Ind_{[ ax,bx]}(\xi) \frac{-1}{\log \nicefrac a b} \xi^{-1} d \xi $$
    or ($\alpha \in (0,1)$)
    $$ K'(t,d\xi) = \Ind_{[ 1-x, \alpha (1-x)]}(\xi) \frac{1}{\log \nicefrac 1 \alpha}\xi^{-1} d \xi $$
    (and linear combination of these). 
    Many further examples and a detailed study of these kind of processes may be found in \cite{CuchieroLarssonSvaluto2017}.

\section{Polynomial processes: Computation}\label{sec:computation}

One of the important properties of $m$-polynomial processes is the fact that their moments
$$ \mathbb E_{s,x}[X_t^{\mathbf k}]=P_{s,t}f_{\mathbf k}(x), \qquad f_{\mathbf k}(x)=x^{\mathbf k}, \quad \mathbf k \in \mathbb N_0^d,$$
can be computed in a fairly simple manner, because the Kolmogorov  equations  reduce this problem to the computation of a solution of an ordinary linear differential equation. In this section we will show this for time-inhomogeneous polynomial processes.

\subsection{Representing matrices}\label{sec:matrices}
Following \cite{Kreyszig89} Section 2.9 we recall the well-known concept of representing matrices for a linear operator. In this regard,  consider a finite-dimensional $\R$-vector space $V$ with basis $(v_1,\ldots,v_N)$ and let $L$ be a linear operator on $V$. Since every $u \in V$ has a unique representation  $u= \sum_{j=1}^N u_jv_j$ with $u_1,\dots,u_N \in \R,$ by linearity
$ Lv = \sum_{j=1}^N u_j Lv_j.$ From this we infer that $L$ is uniquely determined by $Lv_j, j=1,\ldots,N$. Since  $Lv_j \in V$,  there exist unique coefficients $l_{ij}$ such that
\begin{equation}
  \label{eq:13}
 Lv_j = \sum_{i=1}^N l_{ij} v_i  
\end{equation} 
and the matrix $\mathbf L = (l_{ij})_{i,j=1}^N \in \R^{N \times N}$ is called the \emph{representing matrix} of $L$ with respect to the given basis of $V$.

For any matrix $\bA\in \R^{N \times N}$ we define the \emph{spectral norm}   
$ \|\bA\|_2:= \max_{|x|=1} |\mathbf Ax|. $ Note that the spectral norm of $\bL$ does depend on the chosen basis. This gives us the freedom to make the spectral norm of $\bL$ arbitrary small: indeed, consider $\varepsilon>0$ and the basis $\varepsilon^{-1}(v_1,\dots,v_N)$. By \eqref{eq:13}, we obtain that the representation of $\bL$ with respect to this basis, denoted by   $\bL^\varepsilon $ satisfies
$ \| \bL^\varepsilon \|_2 = \varepsilon \| \bL \|_2.$ 

\begin{remark} \label{rem:spectral}
Note that
\begin{align}
  \label{eq:20}
  \|\bA\|_2 &= \sqrt{\lambda_{\max}(\bA^\top \bA)} , 
\end{align}
i.e. the norm is given by the square root of the maximal eigenvalue of the positive definite symmetric matrix $\bA^\top\bA$. In view of (\ref{eq:20}) the norm $\|\bA\|_2$ is called the {\bf spectral norm} of $\bA$.
Moreover, 
\begin{equation}
  \label{eq:19}
  \max_{ij}|a_{ij}| \leq \|\bA\|_2 \leq n  \max_{ij}|a_{ij}|.
\end{equation}
In particular, a map $\R \ni t \mapsto \bA_t=(a_{ij}(t)) \in (\R^{N \times N},\|.\|_2)$ is continuous or differentiable if and only if this is true of all coefficient functions $a_{ij}(t)$,  see  \cite{Teschl2012}.  
\end{remark}

\subsection{Representations of polynomial processes}
We continue in the setting of the previous sections and study a polynomial process $X$ on the closed state space $S \subset \mathbb R^d$ and denote its transition operators by $(P_{s,t})_{(s,t) \in \Delta}$ and its infinitesimal generator by $(\ccH_s)_{0 \leq s < T}$. 

Fix a $0 \leq k \leq m$ and consider a fixed basis of $\ccP_k(S)$. For each $(s,t)\in \Delta$, we denote by   $\mathbf P _{s,t}$, and $\mathbf H_s$ the representing matrices of $P_{s,t}$ and $\ccH_s$.

\begin{proposition}
 With the assumptions and notation from above the following holds:
 \begin{enumerate}[i)]
 \item The map $[0,T) \ni s \mapsto \mathbf H_s$ is continuous.
 \item For every $t \in [0,T)$ the map
$ [0,t) \ni s \mapsto \mathbf P_{s,t}$ is right-differentiable and 
   \begin{equation}
\frac{d^+}{ds} \mathbf P_{s,t}= -\mathbf H_s \mathbf P_{s,t}.  \label{backward:mat}   
   \end{equation}
\item For every $s \in [0,T)$ the map
$ [s,T) \ni t \mapsto \mathbf P_{s,t}$ is right-differentiable and 
 \begin{equation}
\frac{d^+}{dt} \mathbf P_{s,t}= \mathbf P_{s,t} \mathbf H_{t}.     \label{forward:mat}   
   \end{equation}
 \end{enumerate}
\end{proposition}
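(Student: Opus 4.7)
My plan is to lift the operator Kolmogorov equations from Lemma~\ref{KolmogorovEqs} to the matrix level via the linear isomorphism $U \colon \ccP_k(S) \to \R^N$ defined by $Uv_j = e_j$. Under $U$ the operators $P_{s,t}$ and $\ccH_s$ become left-multiplication by $\mathbf{P}_{s,t}$ and $\mathbf{H}_s$, respectively, and since all norms on the finite-dimensional space $\R^{N \times N}$ are equivalent by Remark~\ref{rem:spectral}, continuity or right-differentiability in entries is equivalent to the same in the spectral norm. With this dictionary in hand, each claim reduces to a statement in $(\ccP_k(S),\|.\|_k)$.

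For (i), I would apply Theorem~\ref{thm:mainequivalence} d) (which holds for every $0 \leq k \leq m$): the coefficients $b^{v_j}_{\mathbf{l}}(\cdot)$ appearing in the monomial representation of $\ccH_s v_j$ are continuous on $[0,T)$. Re-expressing $\ccH_s v_j = \sum_i (\mathbf{H}_s)_{ij} v_i$ is an $s$-independent $\R$-linear change of basis, so every entry $(\mathbf{H}_s)_{ij}$ depends continuously on $s$, and Remark~\ref{rem:spectral} then yields continuity of $s \mapsto \mathbf{H}_s$ in the spectral norm.

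For (iii), Proposition~\ref{Prop:strongevo} tells us that $(P_{s,t})$ is a strongly continuous evolution system on $(\ccP_k(S),\|.\|_k)$ whose generator (by Proposition~\ref{DomainAt} applied with $m$ replaced by $k$) has full domain $\ccP_k(S)$ and coincides there with $\ccH_s$. Applying Lemma~\ref{KolmogorovEqs} ii) to each basis element $v_j$ yields $\tfrac{d^+}{dt} P_{s,t} v_j = P_{s,t} \ccH_t v_j$ in $(\ccP_k(S),\|.\|_k)$. Expanding $\ccH_t v_j = \sum_i (\mathbf{H}_t)_{ij} v_i$ and reading both sides in coordinates produces the $j$-th column of \eqref{forward:mat}; collecting over $j=1,\dots,N$ and using norm equivalence on $\R^{N \times N}$ gives (iii) in the spectral norm.

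Part (ii) is the most delicate step because Lemma~\ref{KolmogorovEqs} i) assumes right-differentiability of $s \mapsto P_{s,t}f$ at the outset. To supply it, I would use the Chapman--Kolmogorov identity $P_{s,t} = P_{s,s+h} P_{s+h,t}$, which yields, for $0 \leq s < s+h < t$,
\begin{equation*}
\frac{P_{s+h,t} v_j - P_{s,t} v_j}{h}
= -\frac{P_{s,s+h} - I}{h}\, P_{s+h,t} v_j.
\end{equation*}
On the finite-dimensional space $\ccP_k(S)$ pointwise convergence of linear operators coincides with operator-norm convergence, so $\tfrac{P_{s,s+h}-I}{h} \to \ccH_s$ in operator norm, while $P_{s+h,t} v_j \to P_{s,t} v_j$ by strong continuity. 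A standard triangle-inequality estimate, exploiting boundedness of $\ccH_s$ together with uniform boundedness of $\|P_{s+h,t} v_j\|_k$ for small $h$, shows that the right-hand side converges to $-\ccH_s P_{s,t} v_j$. Reading both sides in coordinates then yields \eqref{backward:mat}. The main obstacle is precisely this joint-limit argument in the backward direction; once it is in place, everything else follows mechanically from the evolution-system theory already developed in Section~\ref{sec:evolution} and from the finite-dimensional equivalence of norms recalled in Remark~\ref{rem:spectral}.
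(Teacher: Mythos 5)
Your proof is correct and follows the same overall strategy as the paper: conjugate by the isomorphism $U$, use equivalence of norms on the finite-dimensional spaces $\ccP_k(S)$ and $\R^{N\times N}$ (Remark~\ref{rem:spectral}), and read the operator-level Kolmogorov equations in coordinates. The one place where you genuinely depart from the paper is part (ii). The paper's proof writes the difference quotient for $p^{ij}_{s,t}$ and then simply cites the backward equation \eqref{Kolmogorovbackward}; but Lemma~\ref{KolmogorovEqs}~i) as stated takes right-differentiability of $s\mapsto P_{s,t}f$ as a \emph{hypothesis}, which is exactly what part (ii) asserts. You close this by supplying the Chapman--Kolmogorov factorization
\begin{equation*}
\frac{P_{s+h,t}v_j-P_{s,t}v_j}{h}=-\frac{P_{s,s+h}-I}{h}\,P_{s+h,t}v_j,
\end{equation*}
combined with the facts that $\tfrac{1}{h}(P_{s,s+h}-I)\to\ccH_s$ in operator norm (pointwise convergence on a basis suffices in finite dimensions, using Proposition~\ref{DomainAt}) and that $P_{s+h,t}v_j\to P_{s,t}v_j$ by Proposition~\ref{Prop:strongevo}; the two-term triangle-inequality estimate you describe is exactly what is needed. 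This buys a self-contained justification of the right-differentiability that the paper leaves implicit (or delegates to the reference behind Lemma~\ref{KolmogorovEqs}). The remaining differences are cosmetic: for (i) you route through Theorem~\ref{thm:mainequivalence}~d) plus an $s$-independent change of basis, whereas the paper invokes Proposition~\ref{Thm:gen} directly; both yield continuity of the entries of $\mathbf H_s$. Part (iii) is identical in substance, since the hypothesis of Lemma~\ref{KolmogorovEqs}~ii) is genuinely available from Proposition~\ref{DomainAt}.
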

\begin{proof}
Suppose that $\mathbf P_{s,t}=(p_{s,t}^{ij})_{i,j=1}^N$ and $\mathbf H_s= (a_s^{ij})_{i,j=1}^N$ are representing matrices of $P_{s,t}$ and $\mathcal H_s$ with respect to a given basis $(v_1,\ldots,v_N)$ of $\ccP_k(S)$. Note that
\begin{equation*} 
p_{s,t}^{ij}= \langle e_i | \bP_{s,t} e_j  \rangle \quad \text{and} \quad a_{s}^{ij} = \langle e_i | \bH_s e_j \rangle,  
\end{equation*}
where $(e_j)_{j=1}^N$ denotes the standard basis of $\mathbb R^N$. Let the invertible linear map $U : \ccP_k(S) \to \mathbb R^N$ be defined by $Uv_j=e_j$. Then 
$$\mathbf P_{s,t}= U P_{s,t} U^{-1} \quad \text{ and } \quad \mathbf H_s = U \ccH_s U^{-1}.$$  
By Proposition \ref{Thm:gen}, the map $s \mapsto \ccH_s v_j = \ccH_s U^{-1}e_j \in (\ccP_k(S),\|.\|_k)$ is continuous for every $j=1,\ldots,N$. Since the linear operator $U$ is bounded, the maps
$$ s \mapsto a_{s}^{ij} = \langle e_i| U \ccH_s U^{-1} e_j \rangle, \qquad 1 \leq i,j \leq N$$
are continuous as well. Part i) thus follows from Remark \ref{rem:spectral}. For part ii) and iii) we can argue analogously. For instance, for $h>0$ small enough we can compute
\begin{eqnarray*}
\frac{d^+}{ds} p_{s,t}^{ij} &=& \lim_{ h \searrow 0}\frac 1 h (p_{s+h,t}^{ij}-p_{s.t}^{ij}) \\
&=& \lim_{ h \searrow 0} h^{-1} \langle e_i | \left( \mathbf P_{s+h,t}- \mathbf P_{s,t} \right)e_j \rangle \\
&=& \lim_{ h \searrow 0} h^{-1} \langle e_i| U  \left( P_{s+h,t}- P_{s,t} \right)U^{-1}e_i \rangle
\end{eqnarray*}  
and by (\ref{Kolmogorovbackward}) the right-hand side tends to 
$$ -\langle e_i| U \ccH_s P_{s,t} U e_j \rangle,$$
which is the $ij$-th component of $-\bH_s \bP_{s,t}$. This shows the validity of ii) and the validity of iii) follows in exactly the same way.
\end{proof}
The previous proposition shows that we can compute the action of $P_{s,t}$ on polynomials by solving the ordinary differential equations (\ref{backward:mat}) or (\ref{forward:mat}). In the time-homogeneous case,  where the pair $(P_{s,t},\ccH_s)$ is replaced with the pair $(P_s,\ccH)$, this is straightforward: 
\begin{eqnarray*}
&& \frac{d^+}{ds} \mathbf P_{s}= -\mathbf H \mathbf P_{s} \quad\text{and} \quad \bP_0=\mathbf I\quad \\
&\Rightarrow& \quad \bP_{s}= e^{-s\bH}:=\sum_{k=0}^\infty \frac{\bH^k}{k!},
\end{eqnarray*}
see \cite{Teschl2012} Chapter 3.

In the inhomogeneous case, however, the situation is more complicated: while  the linear equations (\ref{backward:mat}) and (\ref{forward:mat})  are solvable because $\bH_s$ is continuous (see \cite{Teschl2012} Theorem 3.9), there is no simple explicit form of the solution.  In the following, we will present a method of Wilhelm Magnus, see \cite{Magnus}, which will at least allow us to obtain approximate solutions.\\

Denote by
$ [\mathbf A, \mathbf B]:=\mathbf A \mathbf B - \mathbf B \mathbf A$ 
the \emph{commutator} of two matrices.
 \begin{proposition}\label{thm:magnus1}
Let $[0,T) \ni t \mapsto \mathbf B(t) \in \R^{N \times N}$ be continuous and suppose that
$  \int_0^T \|\mathbf B(t)\|_2 dt < \pi.$
Consider the initial value problem 
\begin{equation}
  \label{eq:22}
  \begin{cases}
\frac d {dt} \mathbf U(t) \,= \mathbf B(t)\mathbf U(t), \quad &t> 0 \\
 \phantom{\frac d {dt} }\mathbf U(0)=\mathbf I.       &
  \end{cases}
\end{equation}
Then the unique solution $\mathbf U(t) \in \R^{N \times N}$ of  this problem  is of the form $\mathbf U(t)=e^{\mathbf \Omega(t)}$ where $\mathbf \Omega(t) \in \R^{N \times N}$ is expressible as an absolutely convergent power series
\begin{equation}
  \label{eq:17}
\mathbf \Omega(t) = \sum_{k=1}^\infty \mathbf \Omega_k(t)
\end{equation}
whose coefficients depend on $t$ and $(\mathbf B(s))_{0 \leq s \leq t}$. The first three terms of this series are given by
\begin{eqnarray*}
\mathbf \Omega_1(t)&=&\int_0^t\mathbf B(u) du\\
\mathbf \Omega_2(t)&=&\frac{1}{2}\int_0^t du\int_0^u dv [\mathbf B(u),\mathbf B(v)] \\
\mathbf \Omega_3(t)&=&\frac{1}{6}\int_0^t du \int_0^udv \int_0^v dw \left([\mathbf B(u),[\mathbf B(v),\mathbf B(w)]]+[\mathbf B(w),[\mathbf B(v),\mathbf B(u)]]\right).
\end{eqnarray*}
 \end{proposition}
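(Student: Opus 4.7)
The plan is to make the ansatz $\mathbf{U}(t) = e^{\mathbf{\Omega}(t)}$ with $\mathbf{\Omega}(0) = \mathbf{0}$, derive a first-order ODE for $\mathbf{\Omega}$, and then solve it by Picard iteration to obtain the series \eqref{eq:17}. The essential technical tool is the formula for the derivative of a matrix exponential along a one-parameter family: writing $\operatorname{ad}_{\mathbf{A}}(\mathbf{X}) := [\mathbf{A},\mathbf{X}]$, one has
\[
\frac{d}{dt}e^{\mathbf{\Omega}(t)} \;=\; \Bigl(\sum_{k=0}^\infty \frac{\operatorname{ad}_{\mathbf{\Omega}(t)}^k}{(k+1)!}\Bigr)\mathbf{\Omega}'(t)\cdot e^{\mathbf{\Omega}(t)}.
\]
Substituting into \eqref{eq:22} and cancelling $e^{\mathbf{\Omega}(t)}$ on the right yields the Magnus equation. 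The analytic function $\varphi(z) = (1-e^{-z})/z$ has its nearest singularities at $z = \pm 2\pi i$, and its local inverse on $|z|<2\pi$ is the generating series $\sum_{k\ge 0}(B_k/k!)z^k$ of Bernoulli numbers. Inverting accordingly transforms the Magnus equation into the nonlinear ODE
\[
\mathbf{\Omega}'(t) \;=\; \sum_{k=0}^\infty \frac{B_k}{k!}\,\operatorname{ad}_{\mathbf{\Omega}(t)}^k\,\mathbf{B}(t), \qquad \mathbf{\Omega}(0) = \mathbf{0}.
\]

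Picard iteration applied to the equivalent integral equation, starting from $\mathbf{\Omega}^{(0)} \equiv 0$, produces a sequence of iterates whose successive increments can be grouped by the total number of factors of $\mathbf{B}$ appearing; collecting these yields the formal series $\mathbf{\Omega}(t) = \sum_{k\ge 1} \mathbf{\Omega}_k(t)$ of \eqref{eq:17}, where $\mathbf{\Omega}_k$ is a $k$-fold iterated integral of $k$-fold nested commutators of $\mathbf{B}$ at different time points. The explicit expressions for $\mathbf{\Omega}_1, \mathbf{\Omega}_2, \mathbf{\Omega}_3$ can then be read off by retaining contributions of order $1, 2, 3$ in $\mathbf{B}$: only the $k=0$ Bernoulli term contributes to first order, producing $\mathbf{\Omega}_1(t)=\int_0^t \mathbf{B}(u)\,du$; only $k=1$ (with $B_1 = -\tfrac{1}{2}$) contributes at second order, producing $\mathbf{\Omega}_2$; at third order the $k=1$ term applied to $\mathbf{\Omega}_2$ and the $k=2$ term (with $B_2 = \tfrac{1}{6}$) applied to $\mathbf{\Omega}_1$ both contribute, and after rearranging the iterated integrals and invoking the Jacobi identity one obtains the symmetric double-commutator formula for $\mathbf{\Omega}_3$.

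The main obstacle is the absolute convergence of the series under the hypothesis $\xi(T) := \int_0^T \|\mathbf{B}(t)\|_2\,dt < \pi$. Using the operator-norm inequality $\|\operatorname{ad}_{\mathbf{A}}\|_2 \le 2\|\mathbf{A}\|_2$, the norm of the $k$-th Picard iterate can be controlled termwise by a scalar quantity $\omega^{(k)}(t)$ satisfying the integral inequality associated to the scalar majorant ODE
\[
y'(t) = G\bigl(2y(t)\bigr)\,\|\mathbf{B}(t)\|_2, \qquad y(0) = 0,
\]
where $G(z) := \sum_{k\ge 0} (|B_k|/k!)\,z^k$ has radius of convergence $2\pi$. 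A change of variables $t \mapsto \xi(t)$ reduces this to a separable scalar equation whose solution remains finite precisely as long as $\xi(t) < \pi$, which is exactly the stated hypothesis. This establishes that $\mathbf{\Omega}(t)$ is a well-defined absolutely convergent series with $\|\mathbf{\Omega}(t)\|_2 < \infty$ solving the Magnus equation; uniqueness of solutions of the linear initial value problem \eqref{eq:22} (standard for continuous right-hand sides) then identifies $e^{\mathbf{\Omega}(t)}$ with $\mathbf{U}(t)$ and completes the argument.
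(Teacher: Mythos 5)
The paper offers no proof of this proposition at all --- it cites Theorem 9 of Blanes et al.\ and S\'anchez et al.\ --- so your attempt is a reconstruction rather than a rederivation, and its formal part follows the standard route correctly: the derivative formula for $t\mapsto e^{\mathbf\Omega(t)}$, inversion of $\operatorname{dexp}$ via the Bernoulli generating function, and Picard iteration organized by the number of factors of $\mathbf B$ do yield the recursive Magnus series and the stated $\mathbf\Omega_1,\mathbf\Omega_2,\mathbf\Omega_3$. (One cosmetic inconsistency: with your left-multiplicative convention $\frac{d}{dt}e^{\mathbf\Omega}=\bigl(\sum_{k\ge0}\frac{\operatorname{ad}_{\mathbf\Omega}^k}{(k+1)!}\bigr)(\mathbf\Omega')\,e^{\mathbf\Omega}$ the relevant scalar function is $(e^z-1)/z$, whose reciprocal $z/(e^z-1)$ has $B_1=-\tfrac12$ as you later use; it is not $(1-e^{-z})/z$.)

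The genuine gap is in the convergence step, which is the only nontrivial quantitative content of the proposition. The scalar majorant you invoke, $y'=G(2y)\,\|\mathbf B(t)\|_2$ with $G(z)=\sum_{k\ge0}\frac{|B_k|}{k!}z^k=2+\tfrac z2-\tfrac z2\cot\tfrac z2$, does not stay finite up to $\xi(t)=\pi$. Separating variables, $y$ reaches the critical value $\pi$ (i.e.\ $2y$ reaches the radius of convergence $2\pi$ of $G$) exactly when $\xi(t)=\int_0^{2\pi}\frac{dv}{2G(v)}\approx 1.0869$; since $G\ge 1$ with strict inequality away from $0$, this threshold is strictly less than $\pi$. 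So your argument proves absolute convergence only for $\int_0^T\|\mathbf B(t)\|_2\,dt<1.0868\ldots$, which is the classical Blanes--Casas--Oteo--Ros bound, not the stated bound $\pi$. The sharp constant $\pi$ (Moan--Niesen, which is what Theorem 9 of the cited survey records) cannot be reached by any termwise majorant of the Bernoulli recursion; it requires a different argument showing that $t\mapsto\log\mathbf U(t)$ remains analytic as long as no eigenvalue of the exact fundamental solution $\mathbf U(t)$ crosses the negative real axis, together with an estimate relating $\int_0^t\|\mathbf B\|_2$ to the rotation of those eigenvalues. As written, your proof establishes the proposition with $\pi$ replaced by the smaller constant; to obtain the statement as claimed you must either import that spectral argument or, as the paper does, cite it.
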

For a proof of this result, see  \cite{Blanes} Theorem 9 and \cite{Sanchez}.
The series (\ref{eq:17}) is called {\bf Magnus series} or {\bf Magnus expansion}. 
 \begin{remark}\label{EasyMagnus} 
One can give explicit but quite involved formulas for all of the $\mathbf \Omega_k$ (see \cite{Blanes}). Here we will typically neglect the terms of order $4$ and higher and use $$ e^{\left( \sum_{k=1}^3 \mathbf \Omega_k(t)\right)}$$
as an approximative solution to (\ref{eq:22}). However, let us note that the higher order terms are expressed in terms of more and more nested commutators of the family $(\mathbf B(t))$, which in particular shows that if this family commutes, i.e. $[\mathbf B(t), \mathbf B(s)]=0$ for all $s,t$, then $\mathbf\Omega^{\mathbf B}_k(t)=0$ for $k > 1$ and so the solution of (\ref{eq:22}) is given by
\begin{equation}
  \label{eq:23}
  \mathbf U(t)= e^{\int_0^t \mathbf B(r) dr}. 
\end{equation}  
see \cite{Blanes}.
\end{remark}
Now let us apply Theorem \ref{thm:magnus1} to the Kolmogorov equations (\ref{backward:mat}) and (\ref{forward:mat}). Recall that by choosing an appropriate basis we are always able to achieve \eqref{condition:pi}, see Section \ref{sec:matrices}.

\begin{theorem}\label{thm:magnus2} If 
\begin{align}\label{condition:pi} \int_s^T\|\mathbf H_s\|_2 ds < \pi,\end{align}
 then for $0 \leq s \leq t \leq T$ we have $\mathbf P_{s,t}=e^{\mathbf \Omega(s,t)}$ where $\mathbf \Omega(s,t)$ is expressible as an absolutely convergent power series
 \begin{equation}
    \label{eq:24}
\mathbf \Omega(s,t) = \sum_{k=1}^\infty \mathbf \Omega_k(s,t)
\end{equation}
whose coefficients depend on $s,t$ and $(\mathbf H_u)_{s \leq u \leq t}$. The first three terms of this series are given by
{\small
\begin{eqnarray}
\mathbf \Omega_1(s,t)&=&\int_s^t\mathbf H_u du \label{eq:omega1}\\
\mathbf \Omega_2(s,t)&=&-\frac{1}{2}\int_s^t du\int_s^u dv \:[\mathbf H_u,\mathbf H_v] \label{eq:omega2}\\ 
\mathbf \Omega_3(s,t)&=&\frac{1}{6}\int_s^t du \int_s^udv \int_s^v dw \left([\mathbf H_u,[\mathbf H_v,\mathbf H_w]]+[\mathbf H_w,[\mathbf H_v,\mathbf H_u]]\right). \label{eq:omega3}
\end{eqnarray}
}
\end{theorem}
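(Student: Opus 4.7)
The plan is to reduce the forward Kolmogorov equation \eqref{forward:mat} to the form treated in Theorem \ref{thm:magnus1} and then read off the coefficients term by term. Since Theorem \ref{thm:magnus1} treats left-multiplicative ODEs of the form $\frac{d}{dt}\mathbf U = \mathbf B(t)\mathbf U$ with initial time $0$, while \eqref{forward:mat} is right-multiplicative and has initial time $s$, two adjustments are needed: a transposition, to convert right multiplication into left multiplication, and a time shift $\tau = t - s$, to move the initial time to zero.

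\textbf{Reduction.} Fix $s \in [0,T)$ and define $\mathbf U(\tau) := \mathbf P_{s,\,s+\tau}^{\top}$ for $\tau \in [0, T-s]$. Transposing \eqref{forward:mat} yields
\begin{equation*}
\frac{d^+}{d\tau} \mathbf U(\tau) = \mathbf H_{s+\tau}^{\top}\, \mathbf U(\tau), \qquad \mathbf U(0) = \mathbf I.
\end{equation*}
Set $\mathbf B(\tau) := \mathbf H_{s+\tau}^{\top}$. Since $s \mapsto \mathbf H_s$ is continuous (by the preceding proposition) and $\|\mathbf A^{\top}\|_2 = \|\mathbf A\|_2$, we have
\begin{equation*}
\int_0^{T-s} \|\mathbf B(\tau)\|_2\, d\tau \;=\; \int_s^T \|\mathbf H_u\|_2\, du \;<\; \pi
\end{equation*}
by the hypothesis \eqref{condition:pi}. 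Theorem \ref{thm:magnus1} therefore furnishes $\mathbf U(\tau) = e^{\tilde{\mathbf \Omega}(\tau)}$, where $\tilde{\mathbf \Omega}(\tau) = \sum_k \tilde{\mathbf \Omega}_k(\tau)$ is the Magnus series associated to $\mathbf B$. A minor technical point is that Theorem \ref{thm:magnus1} is stated with a two-sided derivative while our equation provides only the right derivative; but $\mathbf U$ and $\tau \mapsto e^{\tilde{\mathbf \Omega}(\tau)}$ are continuous on $[0, T-s]$, share the same right derivative pointwise, and agree at $\tau = 0$, so a standard mean-value argument forces equality.

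\textbf{Extracting the coefficients.} Transposing back and using $(e^{\mathbf A})^{\top} = e^{\mathbf A^{\top}}$ gives $\mathbf P_{s,t} = \mathbf U(t-s)^{\top} = e^{\tilde{\mathbf \Omega}(t-s)^{\top}}$, so define $\mathbf \Omega(s,t) := \tilde{\mathbf \Omega}(t-s)^{\top}$ and $\mathbf \Omega_k(s,t) := \tilde{\mathbf \Omega}_k(t-s)^{\top}$. Under the substitutions $u = s + \tau_1$, $v = s + \tau_2$, $w = s + \tau_3$, the first term immediately becomes
\begin{equation*}
\mathbf \Omega_1(s,t) \;=\; \Bigl( \int_0^{t-s} \mathbf H_{s+\tau}^{\top}\, d\tau \Bigr)^{\top} \;=\; \int_s^t \mathbf H_u\, du,
\end{equation*}
matching \eqref{eq:omega1}. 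For the second term, the identity $[\mathbf A^{\top},\mathbf C^{\top}] = -[\mathbf A,\mathbf C]^{\top}$, and hence $[\mathbf A^{\top},\mathbf C^{\top}]^{\top} = -[\mathbf A,\mathbf C]$, accounts precisely for the sign $-\tfrac{1}{2}$ appearing in \eqref{eq:omega2}. For the third term, two applications of the same identity give $[\mathbf A^{\top},[\mathbf B^{\top},\mathbf C^{\top}]]^{\top} = [\mathbf A,[\mathbf B,\mathbf C]]$, so the positive sign survives and \eqref{eq:omega3} follows. The only delicate aspect of the proof is this bookkeeping of the transposition-induced signs, which fortunately align exactly with those stated in the theorem.
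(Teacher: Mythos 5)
Your proposal is correct and follows essentially the same route as the paper's own proof: transpose to convert the right-multiplicative forward equation \eqref{forward:mat} into the left-multiplicative form of Theorem \ref{thm:magnus1}, shift time by $\tau = t-s$, verify the integral bound using $\|\mathbf A^\top\|_2 = \|\mathbf A\|_2$, and transpose back, with the commutator identities accounting for the sign in \eqref{eq:omega2} and its cancellation in \eqref{eq:omega3}. Your added remark on reconciling the one-sided derivative with the two-sided ODE of Theorem \ref{thm:magnus1} is a small point the paper glosses over, but it does not change the argument.
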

As a special case we obtain for a commuting  family $(\mathbf H_s)$, that 
\begin{equation}
  \label{eq:28}
  \mathbf P_{s,t}= e^{\int_s^t \mathbf H_u du}.
\end{equation}
\begin{proof}
  The theorem follows by applying Theorem \ref{thm:magnus1} to the forward equation \eqref{forward:mat} and we aim at a representation of the from \eqref{eq:22}. 
To this end, fix $s \in [0,T]$ and 
$$ \mathbf U(r):= \mathbf P_{s,s+r}^\top \quad \text{ and } \quad \mathbf B_s(r):= \mathbf H_{s+r}^\top.$$
Then $\mathbf U(0)=\mathbf P_{s,s}^\top=\mathbf I^\top= \mathbf I$ and for $0 < r < T-s$
$$ \frac {d^+} {dr} \mathbf U(r)= \left( \frac {d^+} {dr} \mathbf P_{s,s+r} \right)^\top = \left( \mathbf P_{s,s+r} \bH_{s+r} \right)^\top =\bH_{s+r}^\top \mathbf P_{s,s+r}^\top = \mathbf B_s(r) \mathbf U(r).$$
Since by assumption
$$ \int_0^{T-s} \|\mathbf B_s(r)\|_2 dr = \int_0^{T-s} \| \mathbf H_{s+r}^\top\|_2 dr = \int_s^\top \| \mathbf H_u\|_2 du < \pi,$$
Theorem \ref{thm:magnus1} yields that the unique solution is of the form $\mathbf U(r) = e^{\tilde {\mathbf \Omega}(r)}$ 
with $\tilde  {\mathbf \Omega}$ having an expression of a power series as in \eqref{eq:17}.
This in turn implies that, by letting $t=s+r$, 
$$ \mathbf P_{s,t}= \big( e^{\tilde{\mathbf \Omega}(t-s)} \big)^\top=e^{(\tilde{\mathbf \Omega}(t-s))^\top}.$$
In particular, setting 
$ \mathbf \Omega(s,t):=[\tilde{\mathbf \Omega}(t-s)]^\top, $
the first part of the theorem follows. Moreover, we can compute
$$ \mathbf \Omega_1(s,t) = \left[ \int_0^{t-s} \mathbf B_s(u) du\right]^\top = \left[\int_0^{t-s} \mathbf H_{u+s}^\top du\right]^\top = \int_s^t \mathbf H_u du,$$
showing (\ref{eq:omega1}). To compute $\mathbf \Omega_2(s,t)$ we use that for two matrices $\mathbf C, \mathbf D$ we have $[\mathbf C, \mathbf D]^\top = - [ \mathbf C^\top, \mathbf D^\top]$ and obtain
\begin{eqnarray*}
\mathbf \Omega_2(s,t) &=& \left[\tilde{\mathbf \Omega}_2(t-s)\right]^\top =\frac{1}{2} \left[\int_0^{t-s} du\int_0^u dv [\mathbf B_s(u),\mathbf B_s(v)] \right]^\top\\  
&=& - \frac{1}{2}\int_0^{t-s} du\int_0^u dv [\mathbf B_s^\top(u),\mathbf B_s^\top(v)]   
= - \frac{1}{2}\int_0^{t-s} du\int_0^u dv [\mathbf H_{s+u},\mathbf H_{s+v}] \\  
&=& - \frac{1}{2}\int_s^{t} du\int_0^{u-s} dv [\mathbf H_{u},\mathbf H_{s+v}]  = - \frac{1}{2}\int_s^{t} du\int_s^{u} dv [\mathbf H_{u},\mathbf H_{v}],
\end{eqnarray*}
showing (\ref{eq:omega2}). Finally, (\ref{eq:omega3}) follows in the same way
 using the fact that for three matrices $\mathbf C, \mathbf D, \mathbf E$ we have $[[\mathbf C, \mathbf D], \mathbf E]^\top= [[\mathbf C^\top, \mathbf D^\top], \mathbf E^\top]$. We finish the proof noting that the validity of (\ref{eq:28}) follows from Remark \ref{EasyMagnus}. 
\end{proof}

\section{Examples of time inhomogeneous polynomial processes}\label{sec:examples}

The goal of this section is to present some further examples of polynomial processes and to show how the results of Section \ref{sec:computation} can be used to compute their moments.  We restrict ourselves to one-dimensional examples and processes with state space $E= \R$ or $E=\R_+$.

\subsection{Brownian motion with drift}\label{sec:brown}
Let $T>0$. We consider the Markov process satisfying
\begin{equation}\label{easyexample}
X_t=\int_0^ta(u)du +W_t, \quad 0 \leq t \leq T
\end{equation}
where $W_t$ is a Brownian motion and $a$ being a continuous function. Let us first show directly that this process is polynomial by computing the associated family of transition operators. 

To this end let $A(t):=\int_0^ta(u)du$. Then for $0 \leq s \leq t \leq T$ and functions $f$ in the domain of $P_{s,t}$ we can compute
\begin{eqnarray}\label{moments}
\nonumber	P_{s,t}f(x)&=&\mathbb{E}\left[f\left(X_{t}\right)|X_s=x\right]\\
\nonumber	&=&\mathbb{E}\left[f\left(X_{t}-X_s+x\right)\right]\\
\nonumber	&=&\mathbb{E}\left[f\left(A(t)+W_{t}-A(s)-W_s +x\right)\right]\\
\nonumber	&=&\mathbb{E}[f(A(t)-A(s)+(W_{t}-W_s)+x)]\\
	&=&\int_{\mathbb{R}}f\left(A(t)-A(s)+x+y\right)\phi\left(\frac{y}{t-s} \right) dy,
\end{eqnarray}
where $\phi$ denotes the density of the standard normal distribution. This is the simplest  expression we can find to express the operator $P_{s,t}$ acting on a general function $f$. Now we need to check what happens if $f$ is in $\ccP_k(\R)$. To this end, let us denote by
$$ f_0(x)=1, f_1(x)=x, \ldots, f_k(x)=x^k$$
the canonical basis of $\ccP_k(\R)$. Moreover, for fixed $t > s$ we denote by $m_i$ the $i$-th moment of the $N(0,(t-s))$ distribution (with density $\phi_{(0,t-s)}$). In this case, the odd moments are equal to $0$ and the even moments can be expressed as $$(t-s)^p(p-1)!$$ for $p=2n, n\in\mathbb{N}$.

\begin{proposition}
For $k\geq 0$, $0 \leq s \leq t \leq T$ and $x \in E$ we have:
\begin{equation}
  \label{eq:34}
P_{s,t}f_k(x) =\sum_{i=0}^{k} {{k}\choose{i}}(A(t)-A(s)+x)^{k-i}m_{i},  
\end{equation}
where $m_i$ is the $i$-th moment of the $\cN(0,(t-s))$ distribution. In particular, the process $X$ defined through (\ref{easyexample}) is polynomial.
\end{proposition}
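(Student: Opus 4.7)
The plan is to start directly from the integral representation derived in the line preceding the proposition, namely
\[P_{s,t}f_k(x) = \int_{\R} \bigl(A(t)-A(s)+x+y\bigr)^k \,\phi_{(0,t-s)}(y)\,dy,\]
and apply the binomial theorem inside the integral to obtain
\[\bigl(A(t)-A(s)+x+y\bigr)^k = \sum_{i=0}^{k}\binom{k}{i}\bigl(A(t)-A(s)+x\bigr)^{k-i}y^{i}.\]
Since $(A(t)-A(s)+x)^{k-i}$ does not depend on $y$, I can pull it out of the integral, leaving
\[\int_{\R} y^{i}\,\phi_{(0,t-s)}(y)\,dy = m_i,\]
which is finite for all $i\le k$, and the formula \eqref{eq:34} drops out. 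The only issue worth a sentence is the interchange of sum and integral, which is immediate because the sum is finite.

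For the second assertion (that $X$ is polynomial), I would verify the two conditions of Proposition \ref{prop:polyequi}. For condition (i), observe from \eqref{eq:34} that $x \mapsto P_{s,t}f_k(x)$ is a polynomial in $x$ of degree at most $k$, since the only $x$-dependence comes through the factor $(A(t)-A(s)+x)^{k-i}$. More generally, for $f\in\ccP_k(\R)$ we may write $f=\sum_{j=0}^k c_j f_j$ and conclude by linearity that $P_{s,t}f\in\ccP_k(\R)$. For condition (ii), I would expand $(A(t)-A(s)+x)^{k-i}$ by the binomial theorem a second time to produce explicit coefficients $\alpha_{\ell}^{k}(s,t)$ as finite sums of products of $(A(t)-A(s))^{j}$ and $m_i=m_i(t-s)$. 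Since $A\in C^1([0,T])$ by the continuity of $a$ and since the even moments $m_{2n}=(t-s)^n(2n-1)!!$ are smooth in $t-s$, each $\alpha_{\ell}^{k}$ belongs to $C^1(\Delta)$, so condition (ii) holds as well.

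I do not anticipate a genuine obstacle here: the only subtlety is bookkeeping the binomial expansion and the explicit form of the Gaussian moments, but no analytic difficulty arises. If desired, one could shorten the argument by noting that $X$ is a Brownian motion with a deterministic (continuously differentiable) drift, hence a polynomial jump-diffusion in the sense of Definition \ref{PolJumpDiff} with $b(t,x)=a(t)$, $c(t,x)=1$, $K\equiv 0$, so Corollary \ref{cor:jump} immediately gives the polynomial property; the formula \eqref{eq:34} is then the explicit computation.
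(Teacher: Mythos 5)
Your computation is correct and is essentially identical to the paper's proof: both start from the integral representation $P_{s,t}f_k(x)=\int_{\R}(A(t)-A(s)+x+y)^k\phi_{(0,t-s)}(y)\,dy$, expand by the binomial theorem, and identify the Gaussian moments $m_i$. Your additional verification of the polynomial property via Proposition \ref{prop:polyequi} (and the alternative via Corollary \ref{cor:jump}) is sound and slightly more explicit than the paper, which simply reads the claim off from formula \eqref{eq:34}, but it is not a different argument.
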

\begin{proof}
For $k=0$ both the left- and right-hand side of (\ref{eq:34}) are $0$. Furthermore, for $k \in \N$ we compute directly 
	\begin{align*}
 P_{s,t}f_{k}(x)&= \int_{\mathbb{R}}(A(t)-A(s)+x+y)^{k}\phi_{(0,t-s)}(y)dy\\
	&=  \sum_{i=0}^k {{k}\choose{i}}(A(t)-A(s)+x)^{k-i} \int_{\mathbb{R}} y^{i}\phi_{(0,t-s)}(y)dy	\\
&= \sum_{i=0}^k {{k}\choose{i}}(A(t)-A(s)+x)^{k-i}m_i.
\qedhere
	\end{align*}
\end{proof}

We continue to consider the Brownian motion with drift given in (\ref{easyexample}). As we have seen, this process is simple enough to compute its moments directly. However, for more complicated polynomial processes this is usually not the case and one has to use the corresponding family of infinitesimal generators (which is easier to obtain) and the results of Section \ref{sec:computation} to compute the moments. In the following we would like to sketch how this can be done.\\

By the It\^o formula, the family of generators of (\ref{easyexample}) is given by
\begin{equation}\label{GenEEx}
	\ccH_tf(x)=a(t)\frac{df(x)}{dx} + \frac{1}{2}\frac{d^2f(x)}{dx^2}, \quad t \geq 0.
\end{equation}

\pagebreak

\begin{lemma}
The family $(\ccH_t)_{t\geq 0}$ commutes. 
\end{lemma}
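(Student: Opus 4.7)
The plan is to exploit the fact that $\mathcal{H}_t$ is a polynomial in the differentiation operator $D := d/dx$ with $x$-independent coefficients, hence any two such operators must commute.

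More concretely, I would write $\mathcal{H}_t = a(t) D + \tfrac{1}{2} D^2$ and compute $[\mathcal{H}_s, \mathcal{H}_t] f$ directly for $f \in \ccP_k(\R)$ (all $f$ involved are polynomials, so there is no issue regarding domains or the necessary higher derivatives). By bilinearity of the commutator,
\begin{equation*}
[\mathcal{H}_s, \mathcal{H}_t] = a(s)a(t)\,[D,D] + \tfrac{1}{2}a(s)\,[D,D^2] + \tfrac{1}{2}a(t)\,[D^2,D] + \tfrac{1}{4}\,[D^2,D^2],
\end{equation*}
and each of these four commutators vanishes trivially because $D$ commutes with every power of itself.

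As a sanity check (and to present the argument in a self-contained fashion in the proof), I would expand both compositions by hand: for any $f\in \ccP_k(\R)$, one has
\begin{equation*}
\mathcal{H}_s\mathcal{H}_t f(x) = a(s)a(t)\,f''(x) + \tfrac{1}{2}\bigl(a(s)+a(t)\bigr)\,f'''(x) + \tfrac{1}{4}\,f^{(4)}(x),
\end{equation*}
which is manifestly symmetric in $(s,t)$, so $\mathcal{H}_s\mathcal{H}_t f = \mathcal{H}_t\mathcal{H}_s f$. Since the operators $\mathcal{H}_s, \mathcal{H}_t$ leave $\ccP_k(\R)$ invariant (their representing matrices $\mathbf{H}_s, \mathbf{H}_t$ act on a fixed basis of $\ccP_k(\R)$), this identity transfers to $[\mathbf{H}_s, \mathbf{H}_t] = 0$ in the matrix sense as well, which is the form needed when applying Theorem \ref{thm:magnus2} and formula \eqref{eq:28}.

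I do not expect any genuine obstacle here: the statement is essentially the observation that the drift and diffusion parts are both constant-coefficient differential operators in $x$, and constant-coefficient differential operators in a single variable always commute. The only thing to be careful about is to state the commutation at the level of the representing matrices on $(\ccP_k(\R), \|\cdot\|_k)$, so that the conclusion can be plugged into the Magnus framework and yield the simplified exponential formula $\mathbf{P}_{s,t} = \exp\!\bigl(\int_s^t \mathbf{H}_u\,du\bigr)$.
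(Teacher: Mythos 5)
Your proposal is correct and matches the paper's own argument: the paper likewise computes $\ccH_t(\ccH_s f)$ and $\ccH_s(\ccH_t f)$ explicitly and observes the resulting expression is symmetric in $(s,t)$, which is exactly your expansion $a(s)a(t)f'' + \tfrac{1}{2}(a(s)+a(t))f''' + \tfrac{1}{4}f^{(4)}$. Your additional framing via constant-coefficient operators in $D$ and the remark on passing to the representing matrices $\mathbf H_t$ are harmless embellishments of the same computation.
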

\begin{proof}
We have 
\begin{eqnarray*}
\ccH_t(\ccH_sf)(x)=a(t)\left( a(s)\frac{d^2f(x)}{dx^2} + \frac{1}{2}\frac{d^3f(x)}{dx^3} \right) + \frac{1}{2}\left(a(s)\frac{d^3f(x)}{dx^3} + \frac{1}{2}\frac{d^4f(x)}{dx^4} \right)\\
\ccH_s(\ccH_tf)(x)=a(s)\left( a(t)\frac{d^2f(x)}{dx^2} + \frac{1}{2}\frac{d^3f(x)}{dx^3} \right) + \frac{1}{2}\left(a(t)\frac{d^3f(x)}{dx^3} + \frac{1}{2}\frac{d^4f(x)}{dx^4} \right)
\end{eqnarray*}
and a short inspection shows that these two terms coincide whenever they are defined.  
\end{proof}

We again choose the standard basis $$ f_0(x)=1, f_1(x)=x, \ldots, f_m(x)=x^m$$
of $\ccP_m(\R)$ and compute the corresponding matrix representation of $\ccH_t$. Since $\ccH_t f_0 = 0, \ccH_t f_1 = a(t)$ and 
$$ \ccH_t f_k = \frac{k(k-1)}{2}\cdot f_{k-2} + k \cdot a(t) \cdot f_{k-1} , \quad k \geq 2,$$
the result looks as follows  (all entries not shown are $0$):
$$ \mathbf H_t=
\begin{pmatrix}
0 & a(t) & 1 &  & & & & \\
 & 0 & 2 a(t) & 3 &  & & &\\
 & &  0 & 3a(t) & 6 & & & \\
 & &  & \ddots & \ddots & \ddots & & \\
 & &  &  & \ddots & \ddots & \ddots& \\
 & &   &  &  & \ddots & \ddots  & \frac{m(m-1)}{2} \\
& &   &  &  & & 0  & ma(t) \\
 & &   &  &  & & & 0 \\
\end{pmatrix} \in \R^{(m+1) \times (m+1)}$$

Also the matrix family $(\mathbf H_t)$ commutes, so by Theorem \ref{thm:magnus2} the representing matrix of $P_{s,t}$ is given by
$$ \mathbf P_{s,t} = e^{\int_s^t \mathbf H_u du}.$$
Since the coefficients $(y_0,\ldots,y_m)$ in the expansion $ P_{s,t} f_k = y_0 f_0 + \ldots +y_m f_m$ are given by the $k$-th column of $\mathbf P_{s,t}$ we can thus compute (using a slightly formal notation)
\begin{eqnarray*}
  \mathbb{E}\left[(X_{t})^k|X_s=x\right] &=& P_{s,t}f_k(x)\\
&=& (1,x,\ldots,x^m) \mathbf P_{s,t} (0,...,0,1,0,...,0)^T,\\
&=& (1,x,\ldots,x^m) e^{\int_s^t \mathbf H_u du} (0,...,0,1,0,...,0)^T,
\end{eqnarray*}  
where the '$1$' in the vector on the right is in the $k$-th spot (starting at $0$). For instance, if we want to obtain the first moment of (\ref{easyexample}) it suffices if we choose $m=k=1$ in the previous procedure. We obtain 
$$\mathbf H_t=\begin{pmatrix} 0& a(t)\\ 0 &0 \end{pmatrix}$$
and
$$  e^{\int_s^{t}\mathbf H_u du} = \exp\left( \begin{pmatrix}
  0 & A(t)-A(s) \\
  0 & 0 
\end{pmatrix}\right) =
\begin{pmatrix} 1&0\\ 0&1 \end{pmatrix} + \begin{pmatrix} 0& A(t)-A(s)\\ 0 &0 \end{pmatrix},$$
where $A(t) = \int_0^t a(u) du$. Hence, 
\begin{eqnarray}
\nonumber \mathbb{E}\left[X_{t}|X_s=x\right] = (1,x) 
  \begin{pmatrix}
    1 & A(t)-A(s) \\
  0 & 1     
  \end{pmatrix}
\begin{pmatrix}
 0 \\
 1
 \end{pmatrix}
=  A(t)-A(s) + x,
\end{eqnarray}
which coincides, as expected, with the result obtained previously.\\


In the above example the computation was straightforward thanks to the commutativity of the matrices $(\mathbf H_t)_{t\geq 0}$. In the next sections we will consider examples where this is not the case. 

\subsection{Ornstein-Uhlenbeck processes} 

The Ornstein-Uhlenbeck processes considered in this section are polynomial as we will formally prove in Section \ref{subsec:polydif}. Here we will just concentrate on their family of generators.\\

(i) We consider the following mean reversion process:
\begin{equation} dX_t=(\theta t-X_t)dt + dW_t, \quad 0 \leq t \leq T,
\end{equation}
where $\theta\in\mathbb{R}$. The infinitesimal generator looks as follows:
	\begin{equation}\label{InfGenOU}
	\ccH_tf(x)=(\theta t-x)\frac{df(x)}{dx} + \frac{1}{2}\frac{d^2f(x)}{dx^2}.
\end{equation}
Being interested in the non-commutativity of this family we compute
	\begin{eqnarray*} 
	\ccH_t(\ccH_sf)(x)&=&(\theta t-x)\left( \theta s\frac{d^2f(x)}{dx^2} - x\frac{d^2f(x)}{dx^2} - \frac{df(x)}{dx} + \frac{1}{2}\frac{d^3f(x)}{dx^3} \right) \\
	&& + \frac{1}{2}\left( \theta s \frac{d^3f(x)}{dx^3} - x \frac{d^3f(x)}{dx^3} - 2 \frac{d^2f(x)}{dx^2} + \frac{1}{2}\frac{d^4f(x)}{dx^4}\right).
\end{eqnarray*}
For the sake of clarity, we show this composition in the case of $f=f_1 \in \ccP_1(\mathbb{R})$ (recall that $f_1(x)=x$).
\begin{eqnarray*}
\ccH_t(\ccH_sf_1)(x)&=& x-\theta t\\
\ccH_s(\ccH_tf_1)(x)&=& x-\theta s.
\end{eqnarray*}
Therefore it is clear that the family of infinitesimal generators does not commute.

\section{Computation of transition operators}\label{sec:computation}
In the following we will indicate the computation of the corresponding transition operators $P_{s,t}$ when acting on $\ccP_2(\R)$. The representing matrix of $\ccH$ with respect to the basis $$f_0(x)=\varepsilon^{-1}, f_1(x)=\varepsilon^{-1} x, f_2(x)= \varepsilon^{-1} x^2$$ 
with $\varepsilon >0$ is given by
\begin{equation}
\mathbf H_t= \varepsilon
\begin{pmatrix}
0 & \theta t & 1 \\
0 & -1 & 2 \theta t \\
0 & 0 & -2
\end{pmatrix}.
\end{equation}
In Appendix A of \cite{Agoitia2017} it is shown that
$$\left\|\mathbf H_t\right\|_2=\varepsilon\sqrt{\frac{5\theta^2t^2 +6 +\sqrt{(3\theta^2t^2 +4)^2 + 4\theta^2t^2}}{2}}.$$
We are always able to choose $\varepsilon$, such that
$$ \int_0^T \|\mathbf H_u\|_2 du < \pi.$$
Then, the corresponding matrix $\mathbf P_{s,t}$ is given by 
$$ \mathbf P_{s,t} = e^{\mathbf \Omega(s,t)}$$
and the  Magnus series $\mathbf \Omega(s,t)=\sum_{k=1}^\infty \mathbf \Omega_k(s,t)$ is absolutely convergent by Theorem \ref{thm:magnus2}. The first three terms of the Magnus series look as follows:
\begin{equation*}
	\mathbf\Omega_1(s,t)=\int_s^t \mathbf H_u du= \varepsilon\begin{pmatrix} 
0 & \theta(t-s) & t-s \\
0 & s-t &2\theta(t-s) \\ 
0 & 0 & 2(s-t)
\end{pmatrix},
\end{equation*}
\begin{eqnarray*}
	\mathbf \Omega_2(s,t)&=& -  \frac{1}{2}\int_s^t du\int_s^u[ \mathbf H_u, \mathbf H_v] dv\\
	&=& -\varepsilon^2 \begin{pmatrix} 
0 & \frac{\theta}{12} (t^3-3st^2+s^2t-4s^3) & 0\\ 
0 & 0 & \frac{\theta}{6}(t^3-3st^2-s^2t-4s^3)\\ 
0 & 0 & 0 \end{pmatrix},
\end{eqnarray*}
and
\begin{eqnarray*}
\mathbf	\Omega_3(s,t)&=& \frac{1}{6}\int_s^t du \int_s^udv \int_s^v\left([\mathbf H_u,[\mathbf  H_v, \mathbf  H_w]+[\mathbf  H_w,[\mathbf  H_v,\mathbf H_u]]\right)dw\\
	&=& \frac{\varepsilon^3}{6}\begin{pmatrix} 0 & a_1 & a_2\\ 0 & 0 & a_3\\ 0 & 0 & 0 \end{pmatrix},
\end{eqnarray*}
where $$a_1=-\frac{7}{24}\theta t^4 - \left(\frac{23}{24}\theta - \frac{2}{3}\right)s^4 + \left(\frac{9}{6}\theta +\frac{1}{2}\right)s^3t + \left(\frac{1}{6}-\frac{\theta}{6}\right)st^3 - \frac{1}{2}s^3,$$
$$a_2=-\frac{\theta}{3}t^3 + {\theta}{3}s^3- \theta s^2t + \theta st^2, $$
and
$$a_3=\frac{1}{24}t^4 - \left(\frac{1}{8} - \frac{5}{6}\theta\right)s^4 + \frac{2}{3}\theta st^3 - \theta s^2t + \left(\theta-\frac{1}{6}\right)s^3t + \theta s^3.$$
Now the matrix $e^{\mathbf \Omega_1(s,t) + \mathbf \Omega_2(s,t) + \mathbf \Omega_3(s,t)}$ can be used as an approximation to $\mathbf P_{s,t}$.

\subsection{Ornstein-Uhlenbeck process} With a short computation under an Ornstein-Uhlenbeck process we intend to show that the commutativity of the operators can even depend on the space. We consider the following stochastic process:
	\[ dX_t= tX_tdt + dW_t, \quad 0 \leq t \leq T.
\]
Its family of infinitesimal generators is given by
\[\ccH_tf(x)=tx\frac{df(x)}{dx} + \frac{1}{2}\frac{d^2f(x)}{dx^2}.
\]
A short computation shows that then $\ccH_s(\ccH_tf)(x)$ is given by 
\begin{eqnarray*}
sx\left( t\frac{df(x)}{dx}+ tx\frac{d^2f(x)}{dx^2} + \frac{1}{2}\frac{d^3f(x)}{dx^3} \right) + \frac{1}{2}\left( tx\frac{d^3f(x)}{dx^2} + 2 t\frac{d^2f(x)}{d^2x} + \frac{1}{2}\frac{d^4f(x)}{d^4x} \right).  
\end{eqnarray*}
When we consider $\mathcal H_s$ on $\ccP_1(\mathbb{R})$ and evaluate the composition on $f_1(x)=x$ we obtain:
\begin{eqnarray*} 
\ccH_s(\ccH_tf_1)(x)&=& stx\\
\ccH_t(\ccH_sf_1)(x)&=& tsx,
\end{eqnarray*}
and then it commutes (and it certainly also commutes on $f_0$ where both terms vanish). But if we consider $\mathcal H_s$ on $\ccP_2(\mathbb{R})$ and evaluate the composition on $f_2(x)=x^2$, then
\begin{eqnarray*}
\ccH_s(\ccH_tf_2)(x)&=& 4tsx^2 + 2t\\
\ccH_t(\ccH_sf_2)(x)&=& 4stx^2 +2s,
\end{eqnarray*}
so the family does not commute on $\ccP_2(\R)$.

\subsection{Jacobi process}
We consider one last type of stochastic process, the Jacobi process, which is a diffusion with a barrier level $b\in\mathbb{R}_+$. It corresponds to the stochastic differential equation	
	\[ dX_t=a(t) +\sqrt{X_t(b-X_t)}dW_t,
\]
where $a(t)$ is a continuous function and $W_t$ is a Brownian motion. We will see in the next chapter that this process is polynomial (see Section \ref{subsec:polydif}). For every $t\geq 0$ its infinitesimal generator is given by

	\[\ccH_tf(x)=a(t)\frac{df(x)}{dx} + \frac{1}{2}x(b-x)\frac{d^2f(x)}{dx^2}.
\]
Again focusing on commutativity of this family we obtain  	
{\small 
\begin{eqnarray*}
&&\ccH_t(\ccH_sf)(x)=a(t)\left( a(s)\frac{d^2f(x)}{dx^2} + \frac{1}{2}\left( (b-2x)\frac{d^2f(x)}{dx^2} + x(b-x)\frac{d^3f(x)}{dx^3} \right) \right)\\
&& +\frac{1}{2} x(b-x)\left( a(s)\frac{d^3f(x)}{dx^3} + \frac{1}{2}\left( -2\frac{d^2f(x)}{dx^2} + 2(b-2x)\frac{d^3f(x)}{dx^3}+ x(b-x)\frac{d^4f(x)}{dx^4} \right) \right).
\end{eqnarray*}
}
We observe that since this expression only contains derivatives of order 2 and higher the family $(\mathcal H_s)$ commutes on $\ccP_1(\R)$. However, for $f_2(x)=x^2$ we obtain
\begin{eqnarray*}
	\ccH_t(\ccH_s)f_2(x)&=&a(t)(2a(s)+ bx - 2x^2)- x(b-x)\\
	\ccH_s(\ccH_t)f_2(x)&=&a(s)(2a(t)+ bx - 2x^2)- x(b-x),
\end{eqnarray*}
so the family $(\mathcal H_s)$ does not commute on $\ccP_2$, making the computation of moments of order $\geq 2$ considerably more difficult than the computation of the first moment.\\

\begin{appendix}
\section{Additional results}\label{sec:appendix}
Recall the definition of the space $\tilde{\ccP}_m(S)$ time-inhomogeneous polynomials being continuously differentiable and having degree of at most $m$ from Equation \eqref{PolSpace}. The following proposition shows that this space is a Banach space under an appropriate norm. 

\begin{proposition}
\label{BanachPk}
For a polynomial $p(s,t,x)=\sum_{|\bk|=0}^m \alpha_{\bk}(s,t)x^\bk \in \tilde{\ccP}_m(S)$  we define the norm   	
\[ \left\VERT p \right\VERT_m  := \sum_{|\mathbf{k}|=0}^m \left(\max_{(s,t) \in\Delta}\left|\alpha_{\mathbf{k}}(s,t)\right| + \max_{(s,t) \in\Delta}\left|D_1 \alpha_{\mathbf{k}}(s,t)\right| + \max_{(s,t)\in\Delta}\left|D_2 \alpha_{\mathbf{k}}(s,t)\right| \right),\] 
Then the space $(\tilde{\ccP}_m(S),\parallel \cdot \parallel_m)$ is complete.
\end{proposition}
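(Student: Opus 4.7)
The plan is to reduce the completeness of $(\tilde{\ccP}_m(S),\VERT\cdot\VERT_m)$ to the well-known completeness of $(C^1(\Delta),\|\cdot\|_{C^1})$, where $\|\alpha\|_{C^1}:=\max_{\Delta}|\alpha|+\max_{\Delta}|D_1\alpha|+\max_{\Delta}|D_2\alpha|$. First I would note that under the standing hypothesis that $S$ is rich enough to identify polynomials uniquely, the coefficient map
\[ \Phi:\tilde{\ccP}_m(S)\longrightarrow C^1(\Delta)^{N},\qquad p=\sum_{|\bk|=0}^m \alpha_{\bk}(s,t)x^{\bk}\ \longmapsto\ (\alpha_{\bk})_{|\bk|\le m},\]
is a bijective linear map, where $N$ is the number of multi-indices with $|\bk|\le m$. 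By construction
\[ \VERT p\VERT_m\ =\ \sum_{|\bk|=0}^m \|\alpha_{\bk}\|_{C^1},\]
so $\Phi$ is an isometric isomorphism when the right-hand side carries the $\ell^1$-product of the $C^1$-norms. Hence it suffices to establish that $(C^1(\Delta),\|\cdot\|_{C^1})$ is a Banach space, since a finite $\ell^1$-product of Banach spaces is again Banach.

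To verify completeness of $(C^1(\Delta),\|\cdot\|_{C^1})$, I would take a Cauchy sequence $(\alpha^{(n)})\subset C^1(\Delta)$. The triangle $\Delta$ is compact, so $(C(\Delta),\|\cdot\|_\infty)$ is a Banach space. The three sequences $(\alpha^{(n)})$, $(D_1\alpha^{(n)})$, $(D_2\alpha^{(n)})$ are each Cauchy in $(C(\Delta),\|\cdot\|_\infty)$ and therefore converge uniformly to continuous functions $\alpha$, $\beta_1$, $\beta_2\in C(\Delta)$, respectively. The standard fundamental-theorem-of-calculus argument then shows that $\alpha$ is $C^1$ on $\Delta$ with $D_1\alpha=\beta_1$ and $D_2\alpha=\beta_2$: for instance, fixing $(s_0,t_0)\in\Delta$ and applying uniform convergence in
\[ \alpha^{(n)}(s,t_0)-\alpha^{(n)}(s_0,t_0)=\int_{s_0}^{s}D_1\alpha^{(n)}(u,t_0)\,du\]
yields $\alpha(s,t_0)-\alpha(s_0,t_0)=\int_{s_0}^{s}\beta_1(u,t_0)\,du$, and similarly in the $t$-direction, so that the partials exist, are continuous, and coincide with $\beta_1,\beta_2$. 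Consequently $\alpha^{(n)}\to\alpha$ in $\|\cdot\|_{C^1}$, proving completeness.

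Combining both steps, given a Cauchy sequence $(p_n)\subset\tilde{\ccP}_m(S)$ with coefficient functions $\alpha_{\bk}^{(n)}$, each $(\alpha_{\bk}^{(n)})_n$ is Cauchy in $C^1(\Delta)$, hence converges to some $\alpha_{\bk}\in C^1(\Delta)$. Setting $p(s,t,x):=\sum_{|\bk|=0}^m \alpha_{\bk}(s,t)x^{\bk}$ yields an element of $\tilde{\ccP}_m(S)$ with $\VERT p_n-p\VERT_m=\sum_{|\bk|\le m}\|\alpha_{\bk}^{(n)}-\alpha_{\bk}\|_{C^1}\to 0$.

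The only delicate point is the legitimacy of the coefficient extraction, i.e.\ that $\Phi$ is well defined; this rests on the assumption in Section~\ref{sec2} that $S$ uniquely identifies polynomials, so that the coefficients $\alpha_{\bk}$ of a given $p\in\tilde{\ccP}_m(S)$ are unambiguously determined. Everything else is a routine reduction to the classical fact that $C^1$ of a compact set under the natural $C^1$-norm is Banach.
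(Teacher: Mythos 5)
Your argument is correct and follows essentially the same route as the paper: extract the coefficient functions, observe that Cauchyness in $\VERT\cdot\VERT_m$ forces each coefficient sequence to be Cauchy in $(C^1(\Delta),\|\cdot\|_{C^1})$, invoke completeness of that space, and reassemble the limit polynomial. The only difference is that you additionally prove the completeness of $C^1(\Delta)$ (which the paper simply cites as known) and you explicitly flag the well-definedness of coefficient extraction; both are welcome but do not change the substance of the argument.
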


\begin{proof}
Let $(f_n)$ be a Cauchy sequence in $\widetilde{\ccP}_m(S)$ with representation
$$f_n(s,t,x)= \sum_{|\mathbf{k}|=0}^m \alpha_{\mathbf{k}}^{n}(s,t)x^{\mathbf k}, \ \ \ \forall n\in\mathbb{N}.$$ 
Then, for all $\epsilon>0$ there exists $N\in\mathbb{N}$ such that for all $n,p\geq N$ it holds that $\left\VERT f_p-f_n\right\VERT_m<\epsilon$, hence
\begin{eqnarray*}
&&\sum_{|\mathbf{k}|=0}^m\left(\max_{(s,t)\in\Delta}|\alpha_{\mathbf{k}}^{p}(s,t)-\alpha_{\mathbf{k}}^{n}(s,t)| + \max_{(s,t)\in\Delta}|D_1 \alpha_{\mathbf{k}}^{p}(s,t)- D_1 \alpha_{\mathbf{k}}^{n}(s,t)| \right. \\ 
&& \left. +\max_{(s,t)\in\Delta}|D_2 \alpha_{\mathbf{k}}^{p}(s,t)- D_2 \alpha_{\mathbf{k}}^{n}(s,t)| \right)<\epsilon.  
\end{eqnarray*} 
As a consequence, we obtain for all $0 \leq |\mathbf{k}| \leq m$: 
\begin{eqnarray*}
\|\alpha_{\mathbf{k}}^{p}-\alpha_{\mathbf{k}}^{n}\|_{C^1} &:=& \max_{(s,t)\in\Delta}|\alpha_{\mathbf{k}}^{p}(s,t)-\alpha_{\mathbf{k}}^{n}(s,t)| + \max_{(s,t)\in\Delta}|D_1 \alpha_{\mathbf{k}}^{p}(s,t)- D_1 \alpha_{\mathbf{k}}^{n}(s,t)| \\
&& +\max_{(s,t)\in\Delta}|D_2 \alpha_{\mathbf{k}}^{p}(s,t)- D_2 \alpha_{\mathbf{k}}^{n}(s,t)| <\epsilon.
\end{eqnarray*}
This shows that for all $0 \leq |\mathbf{k}| \leq m$ the sequence $(\alpha_{\mathbf{k}}^{n})_{n\in\mathbb{N}}$ is a Cauchy sequence in $(C^1(\Delta),\|.\|_{C^1})$. Since this space is complete, 
there exists for every $0 \leq |\mathbf{k}| \leq m$  an $\alpha_{\mathbf{k}}\in C^1(\Delta)$ such that        
$ \|\alpha_{\mathbf{k}}^{n}-\alpha_{\mathbf{k}}\|_{C^1}\stackrel{n\rightarrow \infty}{\rightarrow}0.$
Set $f(s,t,x):=\sum_{|{\mathbf{k}}|=0}^m \alpha_{\mathbf{k}}(s,t)x^{\mathbf{k}}$ for $(s,t,x) \in \widetilde{E}$. Then $f \in \widetilde{\ccP}_m(S)$ since $\alpha_{\mathbf{k}}\in C^1(\Delta)$ and
\begin{eqnarray*}
\left\VERT f_n-f\right\VERT_m &=& \sum_{|{\mathbf{k}}|=0}^m\left(\max_{(s,t)\in\Delta}|\alpha_{\mathbf{k}}^{n}(s,t)-\alpha_{\mathbf{k}}(s,t)| + \max_{(s,t)\in\Delta}|D_1 \alpha_{\mathbf{k}}^{n}(s,t)- D_1 \alpha_{\mathbf{k}}(s,t)| \right. \\
&& \left.+
 \max_{(s,t)\in\Delta}|D_2 \alpha_{\mathbf{k}}^{n}(s,t)- D_2 \alpha_{\mathbf{k}}(s,t)| \right) \\
&=& \sum_{|\mathbf{k}|=0}^m  \|\alpha_{\mathbf{k}}^{n}-\alpha_{\mathbf{k}}\|_{C^1}\stackrel{n\rightarrow\infty}{\rightarrow} 0.
\end{eqnarray*}
So the Cauchy sequence $(f_n)$ converges to $f$ and hence $\widetilde{\ccP}_m(S)$ is complete.
\end{proof}

The following lemma  shows that convergence of polynomials with respect to the norm $\|.\|_m$, defined in Equation \ref{def:normm}, is equivalent to pointwise convergence at every point. 
 \begin{lemma}\label{lem:weak} 
 Let $U \subset \mathbb R^d$ and for every $u \in U$ let $p_u \in \ccP_m(S)$. Then for $u_0 \in U$ the following are equivalent:
 \begin{enumerate}[i)]
     \item $\|p_u-p_{u_0}\|_m \to 0$ for $u \to u_0$.
     \item For every $x \in S$: $|p_u(x)-p_{u_0}(x)| \to 0$ for $u \to u_0$.
 \end{enumerate}

 \end{lemma}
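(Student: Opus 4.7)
The plan is to exploit the fact that $\ccP_m(S)$ is a finite-dimensional real vector space of some dimension $N$, so any two norms on it are equivalent. I would exhibit a natural ``evaluation norm'' on $\ccP_m(S)$, apply equivalence of norms to compare it with $\|\cdot\|_m$, and then reduce each direction to a short estimate.

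For the direction (i) $\Rightarrow$ (ii), I would fix $x \in S$ and observe that for any representation $\alpha \in \ccR(f)$ of $f \in \ccP_m(S)$, the triangle inequality gives
\begin{align*}
 |f(x)| \;\le\; \max_{0 \le |\bk| \le m} |\alpha_\bk| \cdot \sum_{0 \le |\bk| \le m} |x^\bk|.
\end{align*}
Taking the infimum over $\alpha \in \ccR(f)$ yields $|f(x)| \le C_x \|f\|_m$ with $C_x := \sum_{0 \le |\bk| \le m} |x^\bk|$ depending only on $x$. Applying this bound to $p_u - p_{u_0}$ makes (ii) an immediate consequence of (i).

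For (ii) $\Rightarrow$ (i), the idea is to find finitely many points $x_1, \ldots, x_N \in S$ whose evaluation functionals $\mathrm{ev}_{x_i} : f \mapsto f(x_i)$ form a basis of the dual space $(\ccP_m(S))^*$. Since $S$ is, by the standing assumption of Section~\ref{sec2}, rich enough to identify polynomials, the family $\{\mathrm{ev}_x : x \in S\}$ separates points in $\ccP_m(S)$, so its linear span in $(\ccP_m(S))^*$ is the whole dual; from this span one can extract a basis of $N$ evaluation functionals. With such a choice,
\begin{align*}
 |||f||| \;:=\; \max_{1 \le i \le N} |f(x_i)|
\end{align*}
is a genuine norm on the $N$-dimensional space $\ccP_m(S)$. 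By equivalence of norms there exists a constant $K > 0$ with $\|f\|_m \le K \, |||f|||$ for all $f$. Applying this to $p_u - p_{u_0}$ and using (ii) at each of the selected points gives (i).

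The main technical subtlety is that $\|\cdot\|_m$ is defined as an infimum over possibly non-unique monomial representations, which is why pointwise evaluation cannot control coefficients directly. This non-uniqueness is harmless in the (i) $\Rightarrow$ (ii) direction, where passing to the infimum only sharpens the bound. In the reverse direction it forces us to bypass coefficient identification altogether and argue abstractly via finite-dimensional norm equivalence on $\ccP_m(S)$ itself. The only nontrivial ingredient is the existence of the $N$ separating evaluation points in $S$, which, as indicated above, follows cleanly from the hypothesis that $S$ uniquely identifies polynomials together with a standard dimension count in the finite-dimensional dual space.
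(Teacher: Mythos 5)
Your proposal is correct and follows essentially the same route as the paper: both directions hinge on finite-dimensionality of $\ccP_m(S)$ and, for (ii) $\Rightarrow$ (i), on extracting $N$ points $x_1,\dots,x_N \in S$ whose evaluation functionals form a basis of the dual space, which the paper also justifies from the standing assumption that $S$ identifies polynomials. The only cosmetic difference is that you phrase the conclusion via equivalence of the evaluation norm with $\|\cdot\|_m$, whereas the paper invokes the coincidence of weak and norm topologies on a finite-dimensional space; these are interchangeable.
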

 \begin{remark}\label{rem:dual}
 In the proof of this proposition we denote the dual space of a finite dimensional vector space $V$ by $V'$.
We recall that in every finite-dimensional normed space $(V, \|.\|)$ the weak- and norm topologies coincide (see  \cite{Kreyszig89}, Section 4.8). This actually means that for a function $f:U \subset \R^d \to V$ the following are equivalent:
\begin{enumerate}
    \item[a1)] $\|f(u)-f(u_0)\| \to 0$ for $u \to u_0$.
    \item[a2)] For every $l \in V'$: $|l(f(u))-l(f(u_0))| \to 0$ for $u \to u_0$.
\end{enumerate}
 \end{remark}

\begin{proof}
Using the notation of the previous remark we choose  $(V, \|.\|)=(\ccP_m(S), \|.\|_m)$ and  $f: U \to V, f(u)=p_u$.

 $i) \Rightarrow ii)$: This follows immediately from $a1) \Rightarrow a2)$ noting that for every fixed $x \in S$ the functional
 \begin{equation}
   \label{eq:16}
 l_x : \ccP_m(S) \to \mathbb R, \quad l_x(p)=p(x)   
 \end{equation}
is linear, i.e. an element of the dual space $(\ccP_m(S))'$. 

$ii) \Rightarrow i)$: Let $N=\dim(\ccP_m(S))$. Then there exist $N$ different points $x_1,\ldots,x_N \in S$ such that the linear functionals $l_{x_1},\ldots, l_{x_N}$ 
constitue a basis of $(\ccP_m(S))'$: indeed, the linear span of the evaluation functionals $\{ l_x:x \in S\}$ is the dual space $(\ccP_m(S))'$. Since the dimension of a vector space and its dual space coincide we hence find a basis containing $N$ elements. 

For arbitrary $l \in \mathcal (\ccP_m(S))'$  we hence find $\alpha_1, \ldots, \alpha_N \in \R$ such that $l=\alpha_1 l_{x_1} + \ldots + \alpha_N l_{x_N}$. In particular, we obtain that for $u, u_0 \in U$
\begin{eqnarray*}
  |l(f(u)) - l(f(u_0))| &=& |l(p_u)-l(p_{u_0})| = |l(p_u-p_{u_0})| \\
 &=& |\alpha_1 l_{x_1}(p_u-p_{u_0}) + \ldots + \alpha_N l_{x_N}(p_u-p_{u_0})| \\
 &=& |\alpha_1 (p_u-p_{u_0})(x_1) + \ldots + \alpha_N (p_u-p_{u_0})(x_N)| \\
 &\leq & |\alpha_1| |p_u(x_1)-p_{u_0}(x_1)| + \ldots + |\alpha_N| |p_u(x_N)-p_{u_0}(x_N)|
\end{eqnarray*}
and since we assume $ii)$ to hold the right-hand side tends to $0$ for $u \to u_0$. So a2) is satisfied and the implication $a2) \Rightarrow a1)$ shows the validity of $ii) \Rightarrow i)$.
\end{proof}

\begin{lemma}\label{lem:pazy}
Let $(\mathbf A(t))_{0 \leq t <T}$ denote a continuous family of matrices in $\R^{N \times N}$. Then for fixed $s \in [0,T)$ the initial value problem
$$ \left\{
  \begin{array}{cl}
\frac{d}{dt} \mathbf V(s,t)= \mathbf V(s,t) \mathbf A_t, & s < t <T \\
 \mathbf V(s,s)= \mathbf I 
  \end{array}\right.$$
has a unique solution $\mathbf V$ such that $ \mathbf V(s,t)$ is continuously differentiable in both variables and satisfies  
\begin{equation*}
 \frac{d}{ds} \mathbf V(s,t) = - \mathbf A_s \mathbf V(s,t).
\end{equation*}
\end{lemma}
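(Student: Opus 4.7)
The plan is to proceed in three stages: first establish existence, uniqueness, and smoothness of the forward problem by standard linear ODE theory; then derive an evolution-type identity that transfers derivatives in $t$ into derivatives in $s$; and finally extract the backward equation from that identity.

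\emph{Step 1 (forward problem).} For each fixed $s \in [0,T)$, the equation $\frac{d}{dt}\mathbf{V}(s,t) = \mathbf{V}(s,t)\mathbf{A}_t$ is a linear matrix ODE with continuous coefficient, so Picard--Lindel\"of (or the explicit Peano--Baker series) yields a unique $C^1$ solution on $[s,T)$; globality is automatic because on each compact subinterval $\mathbf{A}$ is bounded. A standard Gr\"onwall estimate applied to the integral form $\mathbf{V}(s,t) = \mathbf{I} + \int_s^t \mathbf{V}(s,u)\mathbf{A}_u\,du$, comparing $\mathbf{V}(s,\cdot)$ and $\mathbf{V}(s',\cdot)$, gives joint continuity of $(s,t)\mapsto \mathbf{V}(s,t)$ on $\{0\le s\le t <T\}$.

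\emph{Step 2 (evolution property).} I would show that $\mathbf{V}(s,t) = \mathbf{V}(s,r)\mathbf{V}(r,t)$ for all $s \le r \le t < T$. Fixing $s$ and $r \ge s$, the function $\mathbf{W}(t) := \mathbf{V}(s,r)\mathbf{V}(r,t)$ satisfies $\mathbf{W}(r) = \mathbf{V}(s,r)$ and $\frac{d}{dt}\mathbf{W}(t) = \mathbf{W}(t)\mathbf{A}_t$. But $t\mapsto \mathbf{V}(s,t)$, restricted to $[r,T)$, solves the same linear ODE with the same value $\mathbf{V}(s,r)$ at $t=r$, so uniqueness from Step 1 forces $\mathbf{W}(t) = \mathbf{V}(s,t)$.

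\emph{Step 3 (backward equation and joint regularity).} Fix $t \in (0,T)$ and $s \in [0,t)$. For $h > 0$ small enough that $s+h \le t$, the evolution property gives $\mathbf{V}(s,t) = \mathbf{V}(s,s+h)\mathbf{V}(s+h,t)$, so
\begin{equation*}
\frac{\mathbf{V}(s+h,t) - \mathbf{V}(s,t)}{h} = -\,\frac{\mathbf{V}(s,s+h) - \mathbf{I}}{h}\,\mathbf{V}(s+h,t).
\end{equation*}
From $\mathbf{V}(s,s+h) - \mathbf{I} = \int_s^{s+h} \mathbf{V}(s,u)\mathbf{A}_u\,du$ and continuity of $\mathbf{A}$ and of $\mathbf{V}(s,\cdot)$ at $u=s$, one obtains $h^{-1}(\mathbf{V}(s,s+h) - \mathbf{I}) \to \mathbf{A}_s$ as $h\downarrow 0$; together with the joint continuity of Step 1 this yields $\partial_s^+\mathbf{V}(s,t) = -\mathbf{A}_s\mathbf{V}(s,t)$. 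An analogous computation with $h < 0$, using the identity $\mathbf{V}(s+h,t) = \mathbf{V}(s+h,s)\mathbf{V}(s,t)$, produces the same left derivative. Continuity of both partials then follows from the formulas $\partial_t\mathbf{V}(s,t) = \mathbf{V}(s,t)\mathbf{A}_t$ and $\partial_s\mathbf{V}(s,t) = -\mathbf{A}_s\mathbf{V}(s,t)$ combined with the joint continuity already established, giving $\mathbf{V}\in C^1$ in both variables.

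The main obstacle is really just the joint continuity of $\mathbf{V}$; once that is in hand, the evolution identity of Step 2 makes the backward equation almost formal. The natural route is the Gr\"onwall comparison indicated in Step 1, where the slight asymmetry of $s$ appearing both as lower limit of integration and as initial condition is exactly what the identity $\mathbf{V}(s,t)=\mathbf{V}(s,s')\mathbf{V}(s',t)$ unwinds.
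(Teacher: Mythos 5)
Your proof is correct, but it takes a genuinely different route from the paper: the paper disposes of this lemma in one line by citing Theorem 5.2 of \cite{Pazy92} (the construction of an evolution system generated by a continuous family of bounded operators), whereas you give a self-contained elementary argument in the matrix setting. Your three steps --- Picard--Lindel\"of plus Gr\"onwall for existence, uniqueness and joint continuity of the forward problem; the two-parameter cocycle identity $\mathbf V(s,t)=\mathbf V(s,r)\mathbf V(r,t)$ via uniqueness; and differencing through that identity to obtain $\partial_s \mathbf V(s,t)=-\mathbf A_s\mathbf V(s,t)$ from both sides --- are exactly the standard proof of the cited theorem specialized to $\R^{N\times N}$, and all the individual claims check out (in particular the left-derivative computation with $h<0$, where $\tfrac1h\int_{s+h}^{s}\mathbf V(s+h,u)\mathbf A_u\,du\to-\mathbf A_s$, correctly reproduces the right derivative). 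What the citation buys is brevity; what your argument buys is that it avoids translating conventions --- Pazy's theorem is stated for $\tfrac{d}{dt}U(t,s)=A(t)U(t,s)$ with the generator acting on the left, while the lemma has $\mathbf A_t$ acting on the right, so a literal application of the reference requires a transposition or time-reversal that the paper leaves implicit and your direct proof never needs. The only point worth making explicit is that continuous differentiability is claimed on the closed triangle $\{0\le s\le t<T\}$, so at $s=t$ and at $s=0$ the relevant derivatives are one-sided; your construction delivers exactly that.
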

\begin{proof}
 The result is a  direct application of Theorem 5.2 in  \cite{Pazy92}.
\end{proof}

In the next theorem, we state some additonal  properties of the families $(P_{s,t})$ and $(\ccH_s)$.

\begin{proposition}\label{Thm:gen}
Let $X=(X_t)_{0 \leq t \leq T}$ be an $S$-valued $m$-polynomial process with family of transition operators $(P_{s,t})_{(s,t)\in\Delta} $ and family of infinitesimal generators $(\ccH_s)_{0 \leq s < T}$. Then for $0 \leq k \leq m$ the following holds:
 for every $f \in \ccP_k(S)$ the map 
$$[0,T) \ni s \mapsto \ccH_s f \in (\ccP_k(S),\|.\|_k)$$ 
is continuous, i.e. the family $(\ccH_s)_{0 \leq s < T}$ is strongly continuous on $(\ccP_k(S),\|.\|_k)$.
\end{proposition}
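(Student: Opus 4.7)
My approach is to reduce the claim to the explicit pointwise formula for $\ccH_s f$ provided by Lemma \ref{Lem:Pointwise}(ii) and then exploit the $C^1$-regularity of the coefficient functions built into the definition of $\tilde \ccP_m(S)$. The key observation is that the norm $\|\cdot\|_k$ is defined as an infimum over representations, so writing down any single explicit representation of $\ccH_s f - \ccH_{s_0} f$ will give an immediate upper bound on its norm.

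First, I would fix $0 \le k \le m$, $f \in \ccP_k(S)$, and a representation
$$P_{s,t}f(x) = \sum_{|\bl|=0}^{k} \alpha_{\bl}(s,t) x^{\bl}, \qquad \alpha_{\bl} \in C^1(\Delta),$$
whose existence is guaranteed because $X$ is $m$-polynomial and $\ccP_k(S) \subset \ccP_m(S)$. Lemma \ref{Lem:Pointwise}(ii) then provides the pointwise representation
$$\ccH_s f(x) = \sum_{|\bl|=0}^{k} D_2^+ \alpha_{\bl}(s,s) \, x^{\bl}, \qquad x \in S, \ 0 \le s < T.$$
Taking the difference of these representations at two points $s$ and $s_0$ in $[0,T)$ gives a concrete representative of $\ccH_s f - \ccH_{s_0} f$ in $\ccR(\ccH_s f - \ccH_{s_0} f)$, and by the infimum definition \eqref{def:normm} of $\|\cdot\|_k$ I immediately obtain the bound
$$\|\ccH_s f - \ccH_{s_0} f\|_k \;\le\; \max_{0 \le |\bl| \le k} \big|D_2^+ \alpha_{\bl}(s,s) - D_2^+ \alpha_{\bl}(s_0,s_0)\big|.$$

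Second, I would argue that each map $s \mapsto D_2^+ \alpha_{\bl}(s,s)$ is continuous on $[0,T)$. Since $\alpha_{\bl} \in C^1(\Delta,\R)$, the partial derivative $D_2 \alpha_{\bl}$ (interpreted as the one-sided derivative in the $t$-variable along the diagonal, where necessary) exists and is continuous on $\Delta$; because $t \ge s$ throughout $\Delta$, this one-sided derivative coincides with $D_2^+ \alpha_{\bl}$. Composing the continuous map $D_2^+ \alpha_{\bl}: \Delta \to \R$ with the continuous diagonal embedding $s \mapsto (s,s)$ yields the continuity claimed. Consequently, the right-hand side of the above estimate tends to $0$ as $s \to s_0$, which gives the desired strong continuity of $(\ccH_s)$ on $(\ccP_k(S), \|\cdot\|_k)$.

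I do not expect a substantive obstacle here: the work has already been done in Lemma \ref{Lem:Pointwise}, which supplies the coefficient-level representation, and the $\|\cdot\|_k$-norm is tailor-made to be controlled by any single representation. The one subtle point — worth mentioning but not delicate — is the interpretation of the right derivative $D_2^+ \alpha_{\bl}$ at the diagonal of $\Delta$, which is covered by the standing $C^1(\Delta)$-hypothesis built into the definition \eqref{PolSpace} of $\tilde \ccP_m(S)$.
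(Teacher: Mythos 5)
Your proof is correct and follows essentially the same route as the paper: both invoke Lemma \ref{Lem:Pointwise}(ii) for the coefficient representation of $\ccH_s f$, bound $\|\ccH_r f-\ccH_s f\|_k$ by $\max_{\bl}|D_2^+\alpha_{\bl}(r,r)-D_2^+\alpha_{\bl}(s,s)|$, and conclude from the continuity of $D_2\alpha_{\bl}$ along the diagonal. If anything, you are slightly more careful than the paper in using the infimum definition of $\|\cdot\|_k$ to get an inequality rather than an equality, and in flagging the one-sided derivative at the diagonal of $\Delta$.
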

\begin{proof}   
Let $0 \leq k \leq m$ and $f \in \ccP_k(S)$. Then $P_{s,t}f(x) = \sum_{|\mathbf{l}|=0}^k \alpha_{\mathbf l}^f(s,t) x^{\mathbf l}, x \in S,$ for some $\alpha_{\mathbf l}^f \in C^1(\Delta)$ and we have seen in Lemma \ref{Lem:Pointwise} that
\begin{equation*}
(\ccH_sf)(x)=\sum_{|\mathbf{l}|=0}^k \partial_2^+\alpha_{\mathbf{l}}^f(s,s)x^{\mathbf{l}}, \quad x \in S,\quad  0 \leq s <T.  
\end{equation*}
Hence for $0 \leq s,r < T$ we can compute
\begin{eqnarray}
\nonumber \left\|\ccH_rf-\ccH_{s}f\right\|_k 
\nonumber &=&\left\|\sum_{|\mathbf{l}|=0}^k\left( \partial_2 \alpha_{\mathbf{l}}^f(r,r) - \partial_t \alpha_{\mathbf{l}}^f(s,s)\right)x^{\mathbf{l}}\right\|_k\\
\nonumber &=& \max_{0\leq |\mathbf{l}| \leq k} \left| \partial_2 \alpha_{\mathbf{l}}^f(r,r) - \partial_2 \alpha_{\mathbf{l}}^f(s,s) \right|.
\end{eqnarray}
The last term tends to $0$ when $r$ tends to $s$ by the continuity of $\partial_2 \alpha_{\mathbf{l}}^f.$ 
\end{proof}

\end{appendix}

\end{document}